\theoremstyle{definition}
\newtheorem{theorem}{Theorem}[section]
\newtheorem{proposition}[theorem]{Proposition}
\newtheorem{lemma}[theorem]{Lemma}
\newtheorem{question}[theorem]{Question}
\newtheorem{corollary}[theorem]{Corollary}
\newcommand{\calc}{\mathcal{C}}
\newcommand{\Z}{\mathbb{Z}}
\newcommand{\Q}{\mathbb{Q}}
\newcommand{\CP}{\mathbb{CP}}
\newcommand{\bi}{\begin{itemize}}
\newcommand{\ei}{\end{itemize}}
\newcommand{\be}{\begin{enumerate}}
\newcommand{\ee}{\end{enumerate}}
\newcommand{\n}{\beta}
\newcommand{\emp}{\emptyset}
\newcommand{\X}{\times}
\newcommand{\eps}{\epsilon}
\newcommand{\A}{\alpha}
\newcommand{\pd}{\partial}
\newcommand{\g}{\gamma}
\newcommand{\io}{\iota}
\newtheorem*{rep@theorem}{\rep@title}
\newcommand{\newreptheorem}[2]{%
\newenvironment{rep#1}[1]{%
 \def\rep@title{#2 \ref{##1}}%
 \begin{rep@theorem}}%
 {\end{rep@theorem}}}
\begin{document}

\rhead{\thepage}
\lhead{\author}
\thispagestyle{empty}


\raggedbottom
\pagenumbering{arabic}
\setcounter{section}{0}



\title{Genus two trisections are standard}
\date{\today}
\author{Jeffrey Meier \and Alexander Zupan}

\begin{abstract}
We show that the only closed 4--manifolds admitting genus two trisections are $S^2 \X S^2$ and connected sums of $S^1 \X S^3$, $\CP^2$, and $\overline{\CP}^2$ with two summands.  Moreover, each of these manifolds admits a unique genus two trisection up to diffeomorphism.  The proof relies heavily on the combinatorics of genus two Heegaard diagrams of $S^3$. As a corollary, we classify two-component links contained in a genus two Heegaard surface for $S^3$ with a surface-sloped cosmetic Dehn surgery.
\end{abstract}
\maketitle

\section{Introduction}

A trisection of a smooth 4--manifold $X$ is a decomposition of $X$ into three pieces, each of which is diffeomorphic to a 4--dimensional handlebody.  Trisections have been introduced by Gay and Kirby as a 4--dimensional analogue to Heegaard splittings of 3--manifolds \cite{gay-kirby:trisections}.  An explicit goal of the theory of trisections is to provide a vehicle to apply 3--manifold techniques to problems in 4--dimensional topology.  In their seminal paper, they prove that every closed, orientable 4--manifold admits a trisection, and any two trisections for the same 4--manifold become isotopic after some number of iterations of a natural stabilization operation, akin to the Reidemeister-Singer Theorem in dimension three.

While a Heegaard splitting of a 3--manifold has only one complexity parameter (the genus of the splitting), a trisection comes equipped with two such parameters.  A \emph{$(g,k)$--trisection} of a closed, orientable, smooth 4--manifold $X$ is a decomposition
\[ X = X_1 \cup X_2 \cup X_3,\]
where each $X_i$ is a 4--dimensional 1--handlebody obtained by attaching $k$ 1--handles to one 0--handle, each $Y_{ij} = X_i \cap X_j$ is 3--dimensional genus $g$ handlebody, and $\Sigma = X_1 \cap X_2 \cap X_3$ is a closed genus $g$ surface, which we call a \emph{trisection surface}.  Observe that $\Sigma$ is a Heegaard surface for each 3--manifold $\pd X_i = \#^k (S^1 \X S^2)$; hence, the definition implies that $g \geq k$.

Since the field is still in its infancy, there are many basic questions about trisections which have not yet been answered.  In terms of 3--manifolds, the only manifold $Y$ with a genus zero Heegaard splitting is $S^3$, and the manifolds with genus one splittings (lens spaces, $S^1 \X S^2$, and $S^3$) are neatly parameterized by the extended rational numbers.  This leads to the following general question.

\begin{question}
To what extent can we enumerate $(g,k)$--trisections for small values of $g$?
\end{question}

In addition to proving the existence of trisections, Gay and Kirby relate them to handle decompositions.  They show that a 4--manifold admitting a $(g,k)$--trisection has a handle decomposition with one 0--handle, $k$ 1--handles, $(g-k)$ 2--handles, $k$ 3--handles, and one 4--handle.  As such, it is easy to classify $(g,g)$--trisections.  If $X$ has a $(g,g)$--trisection, then $X$ has a handle decomposition with no 2--handles; therefore, by \cite{laudenbach-poenaru}, we have that $X$ is diffeomorphic to $\#^g (S^1 \X S^3)$ (where $\#^0 (S^1 \X S^3) = S^4$).  It follows that there is a unique genus zero trisection: the standard $(0,0)$--trisection of $S^4$ into three 4--balls, two of which are glued to each other along one hemisphere of their 3--sphere boundaries, and the third of which is attached along the resulting 3--sphere boundary.

Note that every Heegaard splitting $Y = H_1 \cup_{\Sigma} H_2$ can be presented by a Heegaard diagram (in fact, infinitely many Heegaard diagrams), which is a triple denoted $(\Sigma,\A,\n)$.  This notion has an analogue in terms of trisections: a \emph{$(g,k)$--trisection diagram} is a quadruple $(\Sigma,\A,\n,\g)$ such that each Heegaard diagram $(\Sigma,\A,\n)$, $(\Sigma,\n,\g)$, and $(\Sigma,\A,\g)$ presents a genus $g$ splitting of $\#^k(S^1 \X S^2)$.  Every trisection diagram yields a straightforward construction of a trisected 4--manifold $X$ (see Section \ref{sec:prelims}).

The manifolds that admit genus one trisections are not difficult to classify; they are described in detail in \cite{gay-kirby:trisections} and diagrams of these trisections are pictured in Figure \ref{fig:G1Diags}.  There are precisely two manifolds which admit $(1,0)$--trisections: $\CP^2$ and $\overline{\CP}^2$.  Thus, the situation is somewhat different here than in dimension three, suggesting another basic question.

\begin{figure}[h!]
\centering
\includegraphics[scale = 1.75]{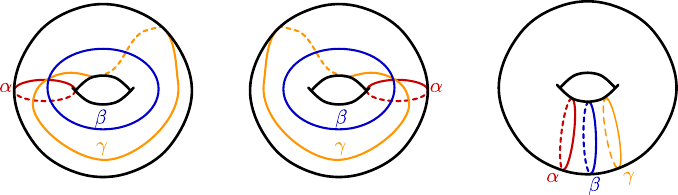}
\put(-298,-20){(a)}
\put(-178,-20){(b)}
\put(-53,-20){(c)}
\caption{The three possible genus one trisection diagrams: the (1,0)--trisection diagrams for (a) $\CP^2$ and (b) $\overline{\CP}^2$, and the (1,1)--trisection diagram for (c) $S^1\times S^3$.}
\label{fig:G1Diags}
\end{figure}

\begin{question}\label{infinite}
What is the smallest value of $g$ for which there are infinitely many 4--manifolds $X$ which admit a $(g,k)$--trisection for some $k$?
\end{question}

Work in progress by the authors and David Gay reveals that the answer to Question \ref{infinite} satisfies $g \leq 3$. It is shown that if $X$ is the double cover of $S^4$ branched over a $k$--twist spun 2--bridge knot where $k \neq \pm 1$, then $X$ admits a minimal genus $(3,1)$--trisection \cite{gmz}.  Hence, Question \ref{infinite} reduces to understanding genus two trisections.  This brings us to the main result of the present paper.

\begin{theorem}\label{thm:main}
If $X$ admits a genus two trisection, then $X$ is either $S^2 \X S^2$ or a connected sum of $S^1 \X S^3$, $\CP$, and $\overline{\CP}^2$ with two summands.  Moreover, each of these 4--manifolds has a unique genus two trisection up to diffeomorphism.
\end{theorem}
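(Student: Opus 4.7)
The argument classifies $(2,k)$--trisection diagrams $(\Sigma,\alpha,\beta,\gamma)$ case by case in $k\in\{0,1,2\}$. The case $k=2$ is immediate: the induced handle decomposition has no 2--handles, so by Laudenbach--Po\'enaru $X\cong\#^2(S^1\X S^3)$, and Waldhausen's uniqueness for Heegaard splittings of $\#^2(S^1\X S^2)$ gives uniqueness of the trisection. For $k=1$, each of the pairs $(\alpha,\beta)$, $(\beta,\gamma)$, $(\alpha,\gamma)$ is a genus two Heegaard diagram of $S^1\X S^2$, which by Waldhausen must be stabilized. The resulting canceling pairs can be used to destabilize the trisection to a $(1,k')$--trisection with $k'\in\{0,1\}$, and Figure~\ref{fig:G1Diags} then leaves exactly $S^1\X S^3\#\CP^2$ and $S^1\X S^3\#\overline{\CP}^2$, each with a unique $(2,1)$--trisection.

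The principal case is $k=0$, where each pair is a genus two Heegaard diagram of $S^3$. The first key input is the Homma--Ochiai--Takahashi theorem on wave reductions of genus two $S^3$--Heegaard diagrams, which, together with the action of the mapping class group of $\Sigma$, permits us to assume $(\Sigma,\alpha,\beta)$ is the standard diagram with $|\alpha_i\cap\beta_j|=\delta_{ij}$. The remaining task is to enumerate cut systems $\gamma$ on this standard surface for which both $(\Sigma,\alpha,\gamma)$ and $(\Sigma,\beta,\gamma)$ are also genus two Heegaard diagrams of $S^3$. Reformulating via the viewpoint announced in the abstract, $\gamma_1\cup\gamma_2$ is a two--component link on the standard genus two Heegaard surface of $S^3$ whose surface--framed Dehn surgery is cosmetic. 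A combinatorial case analysis of such $\gamma$ produces a short explicit list realizing exactly $S^2\X S^2$, $\CP^2\#\CP^2$, $\CP^2\#\overline{\CP}^2$, and $\overline{\CP}^2\#\overline{\CP}^2$, each with a unique genus two trisection.

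The main obstacle is this combinatorial enumeration in the $k=0$ case. The essential subtlety is that handle slides on $\gamma$ chosen to simplify $|\gamma\cap\alpha|$ will in general increase $|\gamma\cap\beta|$, and vice versa, so the reductions must preserve the Heegaard--diagram structure of both $(\Sigma,\alpha,\gamma)$ and $(\Sigma,\beta,\gamma)$ simultaneously. The plan is to introduce a joint complexity on $\gamma$ measuring its intersection with $\alpha\cup\beta$, prove that any $\gamma$ of non--minimal complexity admits a simultaneously simplifying handle slide (possibly after applying an automorphism of $\Sigma$ preserving the standard $(\alpha,\beta)$--diagram), and then enumerate the finite list of irreducible configurations by hand, identifying each with one of the four 4--manifolds listed above.
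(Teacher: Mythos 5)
Your overall decomposition into the cases $k=0,1,2$ matches the paper, and the $k=2$ case is handled the same way (Laudenbach--Po\'enaru plus uniqueness of the genus two splitting of $\#^2(S^1\X S^2)$). But there are two genuine gaps.

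First, the $k=1$ case. You propose to use Waldhausen to find canceling pairs in each of the three Heegaard diagrams $(\A,\n)$, $(\n,\g)$, $(\A,\g)$ of $S^1\X S^2$ and then ``destabilize the trisection to a $(1,k')$--trisection.'' This cannot work as stated: destabilization preserves the underlying 4--manifold, and $(S^1\X S^3)\#\CP^2$ admits no genus one trisection, so a $(2,1)$--trisection of it is not a stabilization of anything. What is actually needed is that the trisection is \emph{reducible}, i.e.\ splits as a connected sum of a $(1,0)$-- and a $(1,1)$--trisection, and this requires producing a single separating curve on $\Sigma$ bounding disks in all three handlebodies simultaneously. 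The canceling pairs coming from the three Heegaard splittings are a priori unrelated, and reconciling them is the whole difficulty; the paper does this (Proposition \ref{21class}) not via Waldhausen but via Gabai's theorem that a knot in $S^1\X S^2$ with a cosmetic surgery is a $(\pm1)$--framed unknot, together with Haken's Lemma and a compression-body argument. Your sketch is missing this idea entirely.

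Second, the $k=0$ case. Your setup (normalize $(\A,\n)$ to the trivial diagram via Homma--Ochiai--Takahashi, then minimize a complexity of $\g$ relative to $\A\cup\n$) is the paper's strategy, but the load-bearing claim --- that every non-minimal $\g$ admits a \emph{simultaneously} simplifying handle slide --- is false as a uniform statement, and the paper does not prove it. In the exceptional case where the $\n$--arcs have slope $-\frac{n\pm1}{n}$ in $\Sigma_\A$ (Section \ref{sec:hard_case}), no complexity-reducing slide is available at all: wave surgery on $\n$ destroys the normalization $\io(\A,\n)=2$, and the argument must instead show, via reducing sequences and nested wave sequences, that $(\n,\g)$ could not have been a Heegaard diagram for $S^3$ in the first place (Lemmas \ref{onewind} and \ref{twowind}). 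Without an argument of this kind your reduction scheme stalls, and you have not shown that the set of ``irreducible configurations'' to be enumerated by hand is finite. (A minor point: your final list includes $\overline{\CP}^2\#\overline{\CP}^2$ as a fourth manifold with its own $(2,0)$--trisection; up to the diffeomorphisms allowed in the statement this is the mirror of $\CP^2\#\CP^2$, and the paper's classification yields exactly the three standard diagrams of Figure \ref{fig:StdDiags}.)
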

\begin{proof}
Suppose $X$ admits a $(2,k)$--trisection $X = X_1 \cup X_2 \cup X_3$.  If $k = 2$, then trivially $X$ is diffeomorphic to $(S^1 \X S^3) \# (S^1 \X S^3)$ and has a trisection diagram $(\A,\n,\g)$ such that $\A = \n = \g$.  If $k = 1$, then by Corollary \ref{21class}, the trisection is reducible and can be written as the connected sum of one of the standard $(1,0)$--trisections and the standard $(1,1)$--trisection shown in Figure \ref{fig:G1Diags}.  It follows that $X$ is diffeomorphic to $(S^1 \X S^3) \# \CP$ or $(S^1 \X S^3) \# \overline{\CP}^2$.  Finally, if $k = 2$, then by Theorem \ref{claim2}, the trisection has a diagram homeomorphic to one of the standard diagrams pictured in Figure \ref{fig:StdDiags}, and thus $X$ is diffeomorphic to $S^2 \X S^2$, $\CP^2 \# \CP^2$, or $\CP^2 \# \overline{\CP}^2$ by \cite{gay-kirby:trisections}.
\end{proof}

\begin{figure}[h!]
\centering
\includegraphics[scale = 1.75]{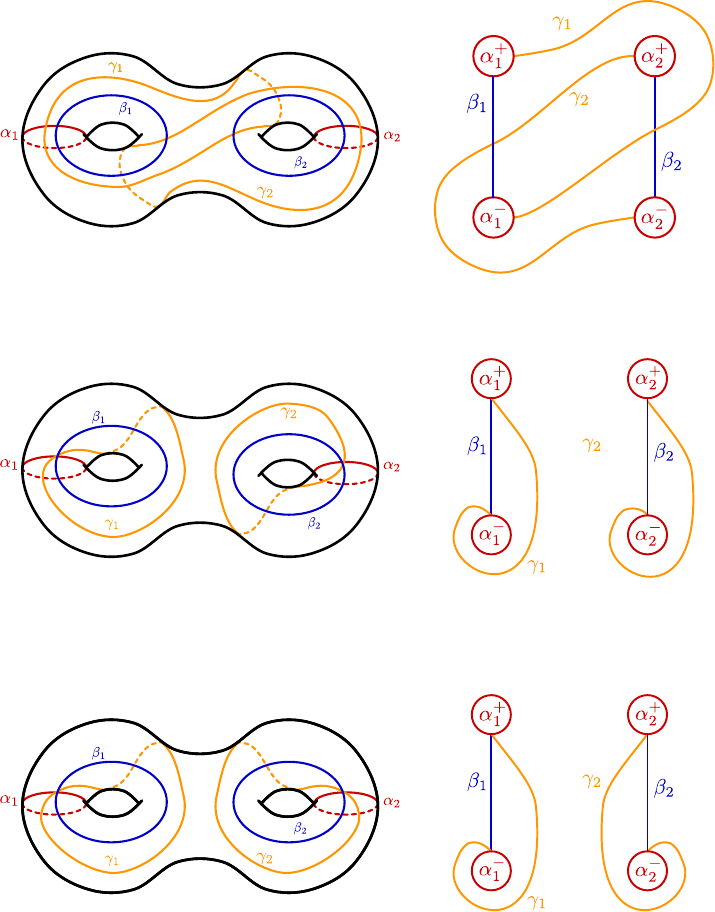}
\put(-190,325){(a)}
\put(-190,160){(b)}
\put(-190,-10){(c)}
\caption{The three standard  (2,0)--trisection diagrams: (a) $S^2\times S^2$, (b) $\CP^2\#\CP^2$, and (c) $\CP^2\#\overline{\CP}^2$.  By Theorem \ref{thm:main}, every $(2,0)$--trisection is represented by one of these three diagrams.}
\label{fig:StdDiags}
\end{figure}

The bulk of this paper is devoted to proving Theorem \ref{claim2}, the classification of $(2,0)$--trisections, by understanding $(2,0)$--trisection diagrams $(\Sigma, \A, \n, \g)$.  In this case, each of $(\Sigma, \A,\n)$, $(\Sigma, \n,\g)$, and $(\Sigma, \g,\A)$ is a genus two Heegaard diagram for $S^3$, and these are well-studied objects in 3--manifold topology.  A result of Homma-Ochiai-Takahashi \cite{hot} states that every genus two Heegaard splitting of $S^3$ can be reduced algorithmically (see Section \ref{sec:prelims}), and this is the main input into our proof of Theorem \ref{claim2}.

As a corollary to the main theorem, we obtain the following result (see Section \ref{sec:surgery} for definitions).

\begin{corollary}\label{cor:surgery}
Suppose that $L$ is a two-component link contained in a genus two Heegaard surface $\Sigma$ for $S^3$, and $L$ admits an $S^3$ surgery with slope given by the surface slope of $L$ in $\Sigma$.  Then there is a series of handle slides on $L$ contained in the surface $\Sigma$ that converts $L$ to a $(\pm 1)$--framed unlink or a $0$--framed Hopf link.
\end{corollary}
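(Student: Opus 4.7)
The plan is to promote $L$ to the $\gamma$--curve system of a $(2,0)$--trisection diagram and then invoke Theorem~\ref{thm:main}. Writing $S^3 = H_1 \cup_\Sigma H_2$, pick cut systems $\alpha \subset \Sigma$ for $H_1$ and $\beta \subset \Sigma$ for $H_2$. The first task is to verify that the surface-slope-surgery hypothesis forces $L = L_1 \cup L_2$ to itself be a cut system on $\Sigma$---bounding meridian disks in some handlebody $H_L$ with $\partial H_L = \Sigma$---and that both $(\Sigma,\alpha,L)$ and $(\Sigma,\beta,L)$ are genus two Heegaard diagrams for $S^3$. This should follow by realizing surface-slope surgery on $L$ as cutting $\Sigma$ along $L$ and capping off with disks on both sides: the hypothesis that the surgery yields $S^3$ then forces each of $H_1 \cup_\Sigma H_L$ and $H_2 \cup_\Sigma H_L$ to be a $3$--sphere. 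The output is a bona fide $(2,0)$--trisection diagram $(\Sigma,\alpha,\beta,L)$ for some closed $4$--manifold $X$.

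Theorem~\ref{thm:main} then implies that $(\Sigma,\alpha,\beta,L)$ is equivalent, via handle slides within each of the three cut systems and diffeomorphisms of $\Sigma$, to one of the three standard diagrams in Figure~\ref{fig:StdDiags}. Direct inspection of those diagrams shows that their $\gamma$--curves, viewed as framed links in $S^3$ via the surface slope, form exactly a $0$--framed Hopf link in case (a) and a $(\pm 1)$--framed two-component unlink in cases (b) and (c), matching the three possible diffeomorphism types of $X$.

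The remaining step, and the main obstacle, is to realize the diagram-level equivalence as a sequence of handle slides on $L$ supported in $\Sigma$. Handle slides on $\alpha$-- or $\beta$--curves do not affect $L \subset \Sigma$ and can be discarded. The surface diffeomorphisms of $\Sigma$ appearing in the classification should be absorbed by simultaneously adjusting the auxiliary data $(\alpha,\beta)$ and applying an ambient isotopy of $(S^3,\Sigma)$, so their net effect on $L$ is an ambient isotopy inside $\Sigma$. The surviving moves are handle slides on $\gamma = L$ in $\Sigma$, which are precisely band slides of $L$ along bands in $\Sigma$, as required. Carrying out this bookkeeping---extracting the explicit moves from the Homma--Ochiai--Takahashi reduction used in the proof of Theorem~\ref{claim2} and verifying that they can be reorganized to act only on the $\gamma$ system up to the above absorptions---is the technical heart of the argument.
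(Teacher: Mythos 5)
Your proposal follows the paper's argument essentially verbatim: the paper's Lemma \ref{cosmeticlink} is exactly your first step (surface-framed surgery on the cut system $L$ yields $S^3$ if and only if $(\Sigma,\alpha,\beta,L)$ is a $(2,0)$--trisection diagram), after which Theorem \ref{thm:main} supplies the handle slides and the standard diagrams are read off as a $0$--framed Hopf link or a $(\pm 1)$--framed unlink. The final bookkeeping you flag as the ``technical heart'' is lighter than you suggest---the slides on $\alpha$ and $\beta$ simply do not move $L$, and the homeomorphism to a standard diagram carries cut systems to cut systems, hence extends over both handlebodies and over $S^3$, preserving the framed link type up to mirror---so no re-examination of the Homma--Ochiai--Takahashi reduction is needed, and the paper likewise treats this step as immediate.
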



In Section \ref{sec:surgery}, we  discuss how Corollary \ref{cor:surgery} relates to the search for exotic, simply connected 4--manifolds with $b_2=2$.  We also remark on the similarity between Corollary \ref{cor:surgery} and the Generalized Property R Conjecture \cite{gst}. 

\subsection{Organization}\ 

In Section \ref{sec:prelims}, we introduce notation and terminology which will be used in the rest of the paper.   In Section \ref{sec:start}, we define a complexity measure for $(2,0)$--trisection diagrams and prove some preliminary results about this complexity.  Section \ref{sec:digress} includes a digression into a special type of Heegaard diagram for $S^3$, which we will use to further reduce the complexity of an arbitrary trisection diagram in Section \ref{sec:hard_case}.  We prove the main theorem about $(2,0)$--trisections in Section \ref{sec:main} and conclude by classifying $(2,1)$--trisections and establishing Corollary \ref{cor:surgery} in Section \ref{sec:surgery}.

\subsection{Acknowledgements}\ 

The authors would like to thank Ken Baker and David Gay for their interest in this project and for their insightful comments at the outset, as well as Cameron Gordon, for helpful discussions related to the intricacies of the arguments presented below.  The first author is supported by NSF grant DMS-1400543, and the second author is supported by NSF grant DMS-1203988.

\section{Preliminary Set-up}\label{sec:prelims}

Manifolds are orientable and connected unless otherwise specified.  For the remainder of the paper, we fix a genus two surface $\Sigma$ and let $\nu( \cdot )$ denote an open regular neighborhood in $\Sigma$.  We use the term \emph{curve} to mean an isotopy class of simple closed curves in $\Sigma$.  Given two distinct curves $c_1$ and $c_2$ in $\Sigma$, we assume that $c_1$ and $c_2$ have been isotoped so that $|c_1 \cap c_2|$ is minimal, and we let $\io(c_1,c_2)$ denote this geometric intersection number.  By orienting $c_1$ and $c_2$, we can also compute the algebraic intersection number, denoted $c_1 \cdot c_2$, by counting the signed intersection number of $c_1$ and $c_2$.

A \emph{cut system} $\A$ for $\Sigma$ is a pair of disjoint curves $\A_1$ and $\A_2$ that cut $\Sigma$ into a four-punctured sphere, which we will denote $\Sigma_{\A}$.  In a slight abuse of language, we refer to $\Sigma_\A$ as both a four-punctured sphere and a sphere with four boundary components.  A \emph{Heegaard diagram} $(\Sigma,\A,\n)$ consists of two cut systems $\A$ and $\n$ for $\Sigma$, whereas a \emph{trisection diagram} $(\Sigma,\A,\n,\g)$ consists of three cut systems $\A$, $\n$, and $\g$ for $\Sigma$ with the additional restrictions described above.  We often suppress $\Sigma$ and simply write $(\A,\n)$ and $(\A,\n,\g)$ for Heegaard and trisection diagrams, respectively.  Given a Heegaard diagram $(\A,\n)$, we may construct a 3--manifold by gluing 3--dimensional 2--handles along $\A$ and $\n$ on opposite sides of a product neighborhood $\Sigma \X I$ of $\Sigma$ and capping off the resulting 2--sphere boundary components with 3--balls.  Similarly, given a trisection diagram $(\Sigma,\A,\n,\g)$, we may construct a 4--manifold by taking a 4--dimensional regular neighborhood $\Sigma \X D$ of $\Sigma$ and attaching 4--dimensional 2--handles along $\A \X \{1\}$, $\n \X \{e^{2\pi i/3}\}$, and $\g \X \{e^{4\pi i/3}\}$, where $D$ is viewed as the complex unit disk and the framings of the handle attachments are given by $\Sigma \X \{p\}$.  Finally, we cap off boundary components with 3-- and 4--handles, which can be done in a unique way by \cite{laudenbach-poenaru}.

We begin by stating several standard lemmata dealing with genus two Heegaard diagrams of $S^3$.  We say that a Heegaard diagram $(\A,\n)$ for $S^3$, where $\A = \{\A_1,\A_2\}$ and $\n = \{\n_1,\n_2\}$, is \emph{trivial} if $\io(\A_i,\n_j) = \delta_{ij}$.  
Trivial Heegaard diagrams are unique (up to homeomorphism).  Given a Heegaard diagram $(\A,\n)$, we define the \emph{intersection matrix} $M(\A,\n)$ of $(\A,\n)$ to be
\[ M(\A,\n) = \begin{pmatrix}
\A_1 \cdot \n_1 & \A_1 \cdot \n_2 \\
\A_2 \cdot \n_1 & \A_2 \cdot \n_2
\end{pmatrix}.\]

\begin{lemma}\label{determ}
If $(\A,\n)$ is a Heegaard diagram for $S^3$, then $|\text{det}(M(\A,\n))| = 1$.
\end{lemma}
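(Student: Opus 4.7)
The plan is to interpret the determinant as the order of $H_1$ of the resulting $3$--manifold, and then exploit $H_1(S^3) = 0$. This is the classical homological computation for Heegaard diagrams, so I don't expect any real obstacle beyond bookkeeping of signs.

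First I would extend $\{\A_1, \A_2\}$ to a symplectic basis $\{\A_1, \A_2, \A_1^*, \A_2^*\}$ of $H_1(\Sigma;\Z) \cong \Z^4$, where $\A_i^*$ is represented by an oriented simple closed curve on $\Sigma$ dual to $\A_i$ with respect to the algebraic intersection pairing (so $\A_i \cdot \A_j^* = \delta_{ij}$ and $\A_i^* \cdot \A_j^* = 0$). Let $H_\A$ denote the handlebody in which $\A_1, \A_2$ bound meridian disks. Then $H_1(H_\A;\Z) \cong \Z^2$ with basis $[\A_1^*], [\A_2^*]$, since the inclusion $\Sigma \hookrightarrow H_\A$ kills exactly the subgroup generated by $[\A_1]$ and $[\A_2]$.

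Next, using that the intersection pairing is unimodular with respect to the chosen symplectic basis, I would record that for any oriented curve $c$ on $\Sigma$ one has
\[
[c] \;=\; \sum_i (\A_i \cdot c)[\A_i^*] + \sum_i (\A_i^* \cdot c)[\A_i]
\]
in $H_1(\Sigma;\Z)$; hence in $H_1(H_\A;\Z)$ the image of $[\n_j]$ is $(\A_1 \cdot \n_j)[\A_1^*] + (\A_2 \cdot \n_j)[\A_2^*]$. Attaching the $\n$--handles to $H_\A$ realizes $S^3$ up to $B^3$ fillings, and a Mayer--Vietoris computation (or the direct handle description) identifies $H_1(S^3;\Z)$ with the cokernel of the map $\Z^2 \to \Z^2$ whose matrix, in the basis $[\A_1^*], [\A_2^*]$, is $M(\A,\n)^T$.

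Finally, since $H_1(S^3;\Z) = 0$, this cokernel is trivial, which forces the map to be an isomorphism of $\Z^2$; equivalently, $|\det M(\A,\n)^T| = |\det M(\A,\n)| = 1$. The only mildly delicate point is verifying the sign/orientation conventions so that the presentation matrix really is $M(\A,\n)^T$ (up to a sign), but once this is set up the conclusion is immediate.
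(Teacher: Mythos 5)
Your proof is correct. The paper states Lemma \ref{determ} without proof, treating it as a standard fact about Heegaard diagrams, and your argument is precisely the standard justification: $M(\A,\n)$ (up to transpose and signs) presents $H_1$ of the glued-up $3$--manifold, so $H_1(S^3)=0$ forces $|\det M(\A,\n)|=1$. The one detail worth tightening is the expansion formula: with the convention $\A_i\cdot\A_j^*=\delta_{ij}$ one gets $[c]=\sum_i(\A_i\cdot c)[\A_i^*]-\sum_i(\A_i^*\cdot c)[\A_i]$ (note the sign on the second sum), but since only the $[\A_i^*]$--coefficients survive in $H_1(H_\A;\Z)$ and the determinant is taken in absolute value, this does not affect the conclusion --- exactly as you anticipated.
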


It follows immediately that if $(\A,\n)$ is a Heegaard diagram for $S^3$ satisfying $\io(\A,\n) = 2$, then $(\A,\n)$ is the trivial diagram.  Now, suppose that $\A_1,\A_2,$ and $\A_3$ are pairwise disjoint curves in $\Sigma$, and let $\A = \{\A_1,\A_2\}$ and $\A' = \{\A_2,\A_3\}$.  We say that $\A$ and $\A'$ are related by a \emph{handle slide}.

\begin{lemma}\cite{johannson}
Two Heegaard diagrams $(\A,\n)$ and $(\A',\n')$ determine the same Heegaard splitting if and only if there are sequences of handle slides taking $\A$ to $\A'$ and $\n$ to $\n'$.
\end{lemma}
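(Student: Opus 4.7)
The lemma has two directions. The ``if'' direction is immediate from the definition of a handle slide: replacing an element $\A_i$ of a cut system by a curve that bounds the band-sum of two disjoint meridian disks of $H_1$ produces another meridian disk of $H_1$, and cutting along the new system still yields a $3$-ball. Hence slides on $\A$ preserve the handlebody $H_1$, slides on $\n$ preserve $H_2$, and the Heegaard splitting $H_1 \cup_\Sigma H_2$ is unchanged.

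For the converse, the hypothesis says that both diagrams present the same splitting $H_1 \cup_\Sigma H_2$, so $\A, \A'$ are both cut systems for $H_1$ and $\n, \n'$ are both cut systems for $H_2$. Since slides on $\A$ and slides on $\n$ are independent, it suffices to prove the handlebody statement: any two cut systems for a fixed handlebody $H$ are related by a sequence of handle slides.

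The plan for this claim is an induction on $|D \cap D'|$, where $D = D_1 \cup D_2$ and $D' = D_1' \cup D_2'$ are systems of properly embedded meridian disks in $H$ bounded by $\A$ and $\A'$, isotoped to minimize first $|\A \cap \A'|$ on $\Sigma$ and then $|D \cap D'|$ in $H$. If $|D \cap D'| > 0$, all intersection components are arcs, and a standard innermost-arc argument on some $D_j'$ yields an embedded subdisk $E \subset D_j'$ whose boundary consists of an arc on $\A_j' \subset \partial H$ followed by an arc in some $D_i$, with no other arcs of $D \cap D_j'$ in the interior of $E$. The disk $E$ guides an isotopy-then-compression of $D_i$ that strictly reduces $|D \cap D'|$, and one argues that this modification can be realized as a composition of handle slides on $\A$. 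Iterating, one reaches the case $D \cap D' = \emptyset$; then cutting $H$ along $D$ yields a $3$-ball $B$, and the curves $\A_1', \A_2'$ lie on the four-punctured sphere $\Sigma \setminus \nu(\A) \subset \partial B$. A short analysis of the possible isotopy classes of a cut system on a four-punctured sphere shows that $\A$ and $\A'$ differ by finitely many further slides supported in $B$.

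The step I expect to be the main obstacle is the innermost-arc reduction: I must express the compression of $D_i$ guided by $E$ as a composition of elementary handle slides on $\Sigma$ that strictly decreases $|D \cap D'|$. This is a classical argument (due in essentially this form to Waldhausen and Johannson) requiring careful cut-and-paste bookkeeping, in particular ruling out unwanted innermost bigons (which the minimality assumption precludes) and ensuring that the band provided by $E$ can be pushed onto $\Sigma$ and realized as a sequence of slides within the current cut system.
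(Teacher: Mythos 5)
The paper does not prove this lemma at all: it is imported wholesale from Johannson with a citation, so there is no in-paper argument to compare against. Your outline is the standard classical proof of that cited result, and its overall architecture is sound: the ``if'' direction from the definition of a slide, the reduction of the converse to the statement that any two complete meridian-disk systems of a fixed handlebody are slide-equivalent, the induction on $|D\cap D'|$ via an outermost subdisk $E\subset D_j'$, and the terminal case $D\cap D'=\emptyset$ handled on the four-punctured sphere. This matches the Waldhausen--Johannson treatment in structure.

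That said, as a proof it is an outline rather than an argument, and the part you defer is precisely the mathematical content of the lemma. Two things are asserted but not established. First, when you surger $D_i$ along $E$, the arc $\beta=E\cap D_i$ splits $D_i$ into $D_i'$ and $D_i''$, and you must show that at least one of $\{E\cup D_i',\,D_{3-i}\}$ or $\{E\cup D_i'',\,D_{3-i}\}$ is again a \emph{complete} system (the surgered disk could a priori be inessential or parallel to $D_{3-i}$); this is where completeness of the original system is actually used. Second, and more importantly, you must show that the passage from $\{D_i, D_{3-i}\}$ to the surgered system is a composition of elementary handle slides on $\Sigma$ --- the surgered curve is a band sum of $\partial D_i$ with itself or with $\partial D_{3-i}$ guided by the arc $E\cap\partial H$, and identifying that band sum with a slide (or a sequence of slides when the band runs over other disks) is the cut-and-paste bookkeeping you name as ``the main obstacle'' without performing it. You also assert without argument that intersections strictly decrease after the surgery and that circle components of $D\cap D'$ are absent under the stated minimality. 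None of these steps is false, but until they are carried out the proposal is a correct roadmap to Johannson's theorem rather than a proof of it; since the authors themselves treat the lemma as a black box, your sketch is at roughly the same level of completeness as the paper's own treatment.
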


We now turn our attention to trisection diagrams.  The first lemma is due to Gay and Kirby.

\begin{lemma}\cite{gay-kirby:trisections} 
Two trisection diagrams $(\A,\n,\g)$ and $(\A',\n',\g')$ determine the same trisection if and only if there are sequences of handle slides taking $\A$ to $\A$, $\n$ to $\n'$, and $\g$ to $\g'$.
\end{lemma}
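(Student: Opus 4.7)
The plan is to mimic the proof of the analogous statement for Heegaard diagrams (the preceding lemma of Johannson), applied to each of the three handlebodies in the spine of the trisection. A trisection diagram $(\Sigma, \A, \n, \g)$ records the trisection surface $\Sigma$ together with, for each pair $ij \in \{12, 23, 31\}$, a cut system presenting the handlebody $Y_{ij}$; the 4--manifold and its decomposition $X = X_1 \cup X_2 \cup X_3$ are then determined uniquely by Laudenbach--Poénaru.

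For the forward direction, I would verify that a single handle slide preserves the trisection. If $\A' = \{\A_1', \A_2\}$ is obtained from $\A = \{\A_1, \A_2\}$ by sliding $\A_1$ over $\A_2$, then $\A'$ bounds a new compressing disk system in the same handlebody $Y_{12}$ (the new disk is obtained by band--summing the old disks inside $Y_{12}$). Consequently, the 4--dimensional 2--handles attached along $\A \times \{1\}$ and along $\A' \times \{1\}$ in the construction of $X$ from the trisection diagram are isotopic, and the resulting trisection is unchanged. Iterating, any sequence of slides on $\A$, $\n$, $\g$ preserves the trisection up to isotopy.

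For the reverse direction, suppose $(\Sigma, \A, \n, \g)$ and $(\Sigma', \A', \n', \g')$ both present the same trisection $T$ of $X$. By definition this furnishes a diffeomorphism $\phi: X \to X$ carrying the spine of one diagram onto the spine of the other, so that $\phi(\Sigma) = \Sigma'$ and $\phi(Y_{ij}) = Y_{ij}'$ for each pair. Applying $\phi^{-1}$ to the second diagram, one may assume $\Sigma = \Sigma'$ and $Y_{ij} = Y_{ij}'$ as subsets of $X$. Now $\A$ and $\A'$ are two cut systems for the common handlebody $Y_{12}$, so the classical fact that any two cut systems for a 3--dimensional handlebody are related by handle slides (the underlying input to Johannson's lemma) yields a sequence of slides in $\Sigma$ taking $\A$ to $\A'$. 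The same argument applied to $Y_{23}$ and $Y_{31}$ produces independent slide sequences taking $\n$ to $\n'$ and $\g$ to $\g'$; since each slide involves only the curves of a single cut system, the three sequences do not interfere with one another and may be performed in any order.

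The main obstacle is the first step of the reverse direction: translating ``same trisection'' into a diffeomorphism that simultaneously identifies the surface \emph{and} all three handlebodies. This rests on the observation that the isotopy class of a trisection $T$ is exactly the data of the ordered triple of handlebodies $(Y_{12}, Y_{23}, Y_{31})$ with common boundary $\Sigma$ inside $X$; any equivalence of trisections must respect this structure, which is precisely what is needed to match the diagrams cut--system--by--cut--system.
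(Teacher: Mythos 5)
The paper does not prove this lemma --- it is quoted directly from Gay--Kirby --- so there is no internal proof to compare against; your argument is the standard one and is essentially what appears in \cite{gay-kirby:trisections}: the forward direction because a slide replaces a curve by one bounding a disk in the same handlebody (hence the attached 4--dimensional 2--handles and the trisection are unchanged), and the reverse direction by identifying the spines and invoking, for each of $Y_{12}$, $Y_{23}$, $Y_{31}$ separately, the classical uniqueness of cut systems for a fixed handlebody up to slides. The only point worth flagging is that after pulling back by the diffeomorphism $\phi$ you obtain slide-equivalence of $\A$ with $\phi^{-1}(\A')$ rather than with $\A'$ itself, which is why the statement is really ``up to handle slides and diffeomorphism of $\Sigma$''; this is how the present paper uses it (its standard diagrams are only claimed up to homeomorphism).
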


We say that a $(2,0)$--trisecton diagram $(\A,\n,\g)$ is \emph{standard} if $\io(\A,\n) = \io(\A,\g) = \io(\n,\g) = 2$.  In Figure \ref{fig:StdDiags}, $\Sigma_\A$ is shown for each of the three standard $(2,0)$--trisection diagrams.  The main theorem presented in this paper asserts that for any $(2,0)$--trisection diagram $(\A,\n,\g)$, there is a sequence of handle slides taking $(\A,\n,\g)$ to one of these three standard diagrams.


For $\A = \{\A_1,\A_2\}$ and $\mathcal C$ a collection of curves in $\Sigma$,  intersection $\mathcal C \cap \Sigma_{\A}$ is a collection of properly embedded essential arcs.  Viewing the boundary components of $\Sigma_\A$ as four fat vertices and the essential arcs as edges, we may consider $\mathcal C \cap \Sigma_\A$ as a (topological) graph, the \emph{Whitehead graph} of $\mathcal C$ with respect to $\A$, denoted $\Sigma_{\A}(\mathcal C)$.  By collapsing each boundary component of $\Sigma_{\A}$ to a vertex and collections of parallel edges in $\Sigma_{\A}(\mathcal C)$ to a single edge, we construct the \emph{reduced Whitehead graph} $G_{\A}(\mathcal C)$.  To avoid confusion, we will refer to edges in $\Sigma_{\A}(\mathcal C)$ as \emph{arcs} and edges in $G_{\A}(\mathcal C)$ as \emph{edges}.  If an arc $a$ in $\Sigma_\A(\mathcal C)$ corresponds to an edge $e$ in $G_{\A}(\mathcal C)$, we say that $a$ is \emph{in the edge} $e$.  The \emph{weight} of an edge $e \in G_{\A}(\mathcal C)$, denoted $w_{\mathcal C}(e)$, is the number of arcs in $e$.  If $\mathcal C' \subset \mathcal C$, then $\Sigma_{\A}(\mathcal C')$ is a subgraph of $\Sigma_\A(\mathcal C)$ and $G_{\A}(\mathcal C')$ is a subgraph of $G_{\A}(\mathcal C)$.  In this case we let $w_{\mathcal C'}(e)$ denote the weight of $e$ as an edge in $G_{\A}(\mathcal C')$.  We use $\A_i^{\pm}$ ($i= 1,2$) to denote the vertices of both $\Sigma_\A(\mathcal C)$ and $G_{\A}(\mathcal C)$.

\begin{figure}[h!]
\centering
\includegraphics[scale = .9]{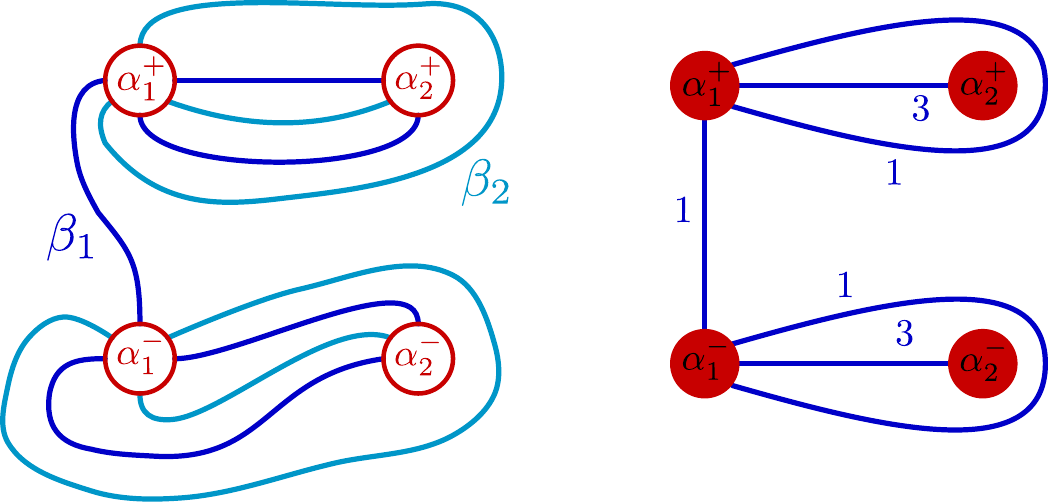}
\put(-207,-15){(a)}
\put(-57,-15){(b)}
\caption{An example of (a) the Whitehead graph $\Sigma_{\A}(\n)$ and (b) the corresponding reduced Whitehead graph $G_\A(\n)$ obtained from a Heegaard splitting $(\A,\n)$.}
\label{fig:WhGraphEx}
\end{figure}

Non-isotopic arcs $a$ and $b$ in $\Sigma_\A$ can intersect in two fundamentally different ways.  If an intersection point $p \in a \cap b$ is a vertex of a triangle in $\Sigma_\A$ cobounded by subarcs of $a$, $b$, and a boundary component $\A_i^{\pm}$, we call $p$ an \emph{inessential point of intersection based in $\A_i^{\pm}$}.  Note that if $a$ and $b$ are arcs of $c \cap \Sigma_{\A}$ and $d \cap \Sigma_{\A}$ for curves $c,d \subset \Sigma$, then there is an isotopy of $c$ and $d$ in $\Sigma$ which pushes $p$ through $\A_i^{\pm}$, converting it to an inessential point of intersection based in $\A_i^{\mp}$.  A point $p \in a \cap b$ which is not inessential is called an \emph{essential point of intersection}.  See Figure \ref{fig:InessentialInt}.  Observe that essential points of intersection are present in $G_\A(c \cup d)$ but we lose all information about inessential points of intersection when passing from $\Sigma_\A(c \cup d)$ to $G_{\A}(c \cup d)$.

\begin{figure}[h!]
\centering
\includegraphics[scale = .7]{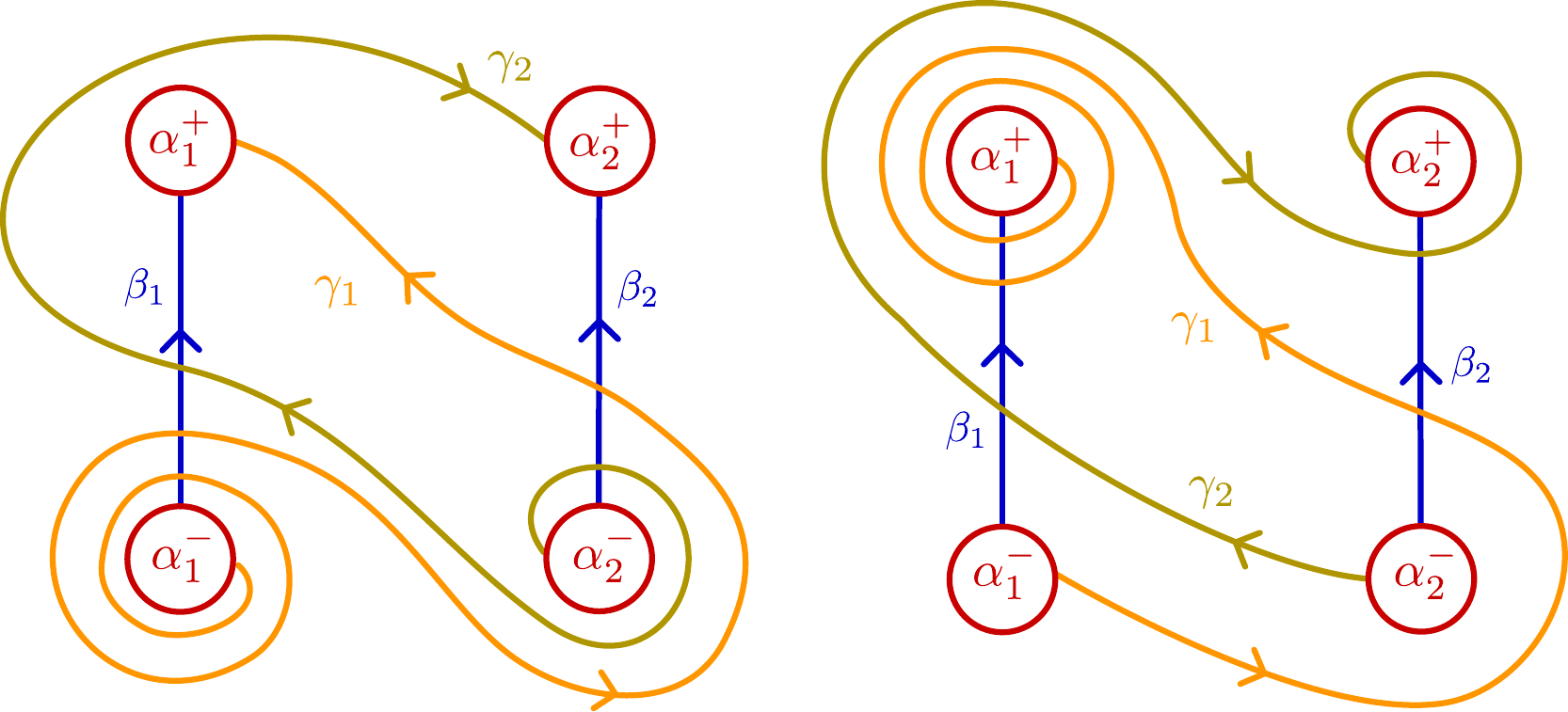}
\put(-260,-15){(a)}
\put(-90,-15){(b)}
\caption{An example (a) of $\Sigma_{\A}(\n,\g)$ such that $\n \cap \g$ contains a total of three inessential intersection points, all based in the vertices $\A_i^-$.  In (b), curves in $\g$ have been isotoped so that inessential intersections are based in the vertices $\A_i^+$.}
\label{fig:InessentialInt}
\end{figure}

An arc $a$ in $\Sigma_\A$ with endpoints in disjoint boundary components is called a \emph{seam}.  Otherwise $a$ is called a \emph{wave}.  Seams in $\Sigma_{\A}$ may be parameterized by the extended rational numbers $\overline{\Q}$ in the following way:  Choose four pairwise disjoint seams which cut $\Sigma$ into two squares with their vertices removed.  Assign the left and right sides of the square the slope $\frac{1}{0}$ and the top and bottom the slope $\frac{0}{1}$.  Any other seam may be isotoped to have constant rational slope in each of the squares, and we assign this slope to the seam.

We call a seam in $\Sigma_{\A}$ that connects either $\A_1^+$ or $\A_1^-$ to either $\A_2^+$ or $\A_2^-$ a \emph{cross-seam}, since it travels across $\Sigma_{\A}$ from one pair of boundary components to the other.  Cross-seams come in pairs:  There is a hyperelliptic involution $J$ of $\Sigma$ that leaves every curve invariant \cite{haas-susskind}.  Thus, the restriction of $J$ to $\Sigma_{\A}$ is a homeomorphism of $\Sigma_\A$ which maps $\A_i^{\pm}$ to $\A_i^{\mp}$.  It follows that if $c$ is a curve in $\Sigma$ and $a_+$ is a cross-seam of $c \cap \Sigma_{\A}$ connecting to $\A_1^+$ to $\A_2^{\pm}$, then since $J(c) = c$, we have that $a_- = J(a_+)$ is a cross-seam of $c \cap \Sigma_{\A}$ connecting $\A_1^-$ to $\A_2^{\mp}$. See Figure \ref{fig:Involutions}.

Next, we show that the slopes of seams determine the number of times they intersect essentially.  For each essential arc $a \subset \Sigma_{\A}$, there is a unique essential curve in $\Sigma_\A$, which we will henceforth call $\Delta_a$, such that $a \cap \Delta_a = \emp$.  See Figure \ref{fig:WaveSurgIntPf}.

\begin{figure}[h!]
\centering
\includegraphics[scale = 1.2]{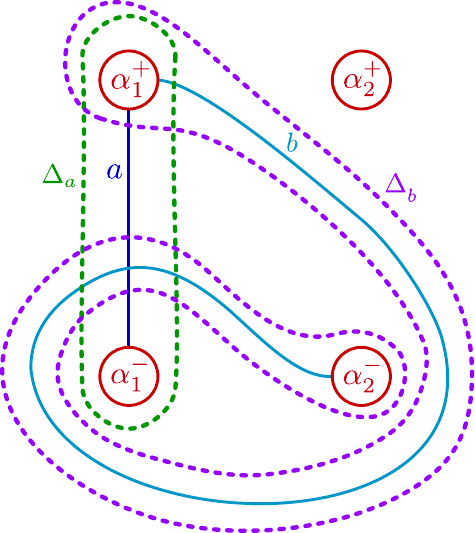}
\caption{An example depicting the relationship between $\io(a,b)$ and $\io(\Delta_a,\Delta_b)$ for arcs $a$ of slope $\frac{1}{0}$ and $b$ of slope $-\frac{1}{3}$, as discussed in the proof of Lemma \ref{inter}.}
\label{fig:WaveSurgIntPf}
\end{figure}

\begin{lemma}\label{inter}
If $a$ and $b$ are seams in $\Sigma_{\A}$ parameterized by slopes $\frac{w}{x}$ and $\frac{y}{z}$, respectively, and $a$ and $b$ have endpoints on $k$ common boundary components of $\pd \Sigma_{\A}$, (where $k\in\{0,1,2\})$, then $a$ and $b$ intersect essentially in $\frac{1}{2}\left(|wz-xy| - k\right)$ points.
\begin{proof}
It is well-known that $i(\Delta_a,\Delta_b) = 2|wz-xy|$.  Each common boundary component of $a$ and $b$ contributes two points of intersection to $\Delta_a \cap \Delta_b$, and each essential point of intersection of $a$ and $b$ contributes four points of intersection to $\Delta_a \cap \Delta_b$.  See Figure \ref{fig:WaveSurgIntPf}.  If there are $q$ essential points of $a \cap b$, it follows that
\[ 2|wz - xy| = 2k + 4q,\]
which easily yields the desired statement.
\end{proof}
\end{lemma}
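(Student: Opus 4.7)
The plan is to take the hint in the proof seriously and reduce the count of essential intersections of the arcs to the well-known geometric intersection formula for essential simple closed curves on the four-holed sphere.

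First, I would fix the precise relationship between a seam $a$ and its associated curve $\Delta_a$. Concretely, if $a$ connects boundary components $B_1,B_2$ of $\Sigma_\A$, then $\Delta_a$ is (isotopic to) the frontier of a regular neighborhood of $a\cup B_1\cup B_2$ in $\Sigma_\A$; this is the unique essential simple closed curve in $\Sigma_\A$ disjoint from $a$, and if $a$ is parametrized by slope $w/x$ then so is $\Delta_a$. From the standard identification of slopes of essential simple closed curves in the four-holed sphere with $\overline{\mathbb{Q}}$ (via the double cover by the torus together with the hyperelliptic involution), one has $i(\Delta_a,\Delta_b)=2|wz-xy|$, which is the input formula used in the proof.

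Next, I would count $\Delta_a\cap\Delta_b$ by doing a local analysis at two kinds of sites on $\Sigma_\A$. At any essential intersection point $p\in a\cap b$, the curve $\Delta_a$ runs parallel to $a$ on both sides of $a$ near $p$, and likewise $\Delta_b$ runs parallel to $b$ on both sides of $b$ near $p$; pairing up these four strands contributes exactly four points of $\Delta_a\cap\Delta_b$ in a neighborhood of $p$. At each boundary component $B$ of $\Sigma_\A$ that is a common endpoint of $a$ and $b$, both $\Delta_a$ and $\Delta_b$ wrap around $B$; following the two boundary-parallel strands of $\Delta_a$ and of $\Delta_b$ into an annular neighborhood of $B$, the pattern forces exactly two intersections there, and a short isotopy shows no additional ones can arise in this collar. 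Away from these sites $\Delta_a$ and $\Delta_b$ are disjoint from each other (by the defining property $a\cap\Delta_a=\emptyset=b\cap\Delta_b$ and the fact that $\Delta_a,\Delta_b$ hug $a\cup\partial,\,b\cup\partial$ respectively). Combining these local contributions yields
\[
2|wz-xy| \;=\; i(\Delta_a,\Delta_b) \;=\; 4q+2k,
\]
where $q$ is the number of essential points of $a\cap b$, from which the lemma follows.

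The step I expect to be the main technical obstacle is verifying that the local counts above really are the minimal intersection numbers, i.e., that after the defining construction of $\Delta_a$ and $\Delta_b$ no bigon between them appears at a shared boundary, and that inessential intersections of $a$ and $b$ (which can be pushed across a boundary component by an ambient isotopy, per the discussion preceding the lemma) do not artificially inflate $i(\Delta_a,\Delta_b)$. Once one checks that the picture drawn in Figure \ref{fig:WaveSurgIntPf} is generic and bigon-free — essentially by noting that $\Delta_a$ and $\Delta_b$ can be taken as boundaries of disjoint thin neighborhoods of $a\cup\partial_a$ and $b\cup\partial_b$, so any bigon would give a bigon between $a$ and $b$ themselves — the arithmetic collapses immediately to the stated formula.
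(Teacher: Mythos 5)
Your proposal is correct and follows essentially the same route as the paper: both take $i(\Delta_a,\Delta_b)=2|wz-xy|$ as the known input, count four points of $\Delta_a\cap\Delta_b$ near each essential point of $a\cap b$ and two near each shared boundary component, and solve $2|wz-xy|=4q+2k$. Your added discussion of bigon-freeness and of why inessential intersections do not inflate the count is a reasonable elaboration of details the paper leaves implicit, but it does not change the argument.
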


Given a Heegaard diagram $(\A,\n)$, an $\A$--\emph{wave} for $(\alpha, \beta)$ is wave $\omega$ properly embedded in $\Sigma_\A$ such that $\omega \cap \n = \emp$.  A $\n$--\emph{wave} is defined similarly.  If such an arc exists, we say that $(\A,\n)$  contains a wave \emph{based in} $\A$ (resp., $\n$) and that $\Sigma_\A(\n)$ (resp., $\Sigma_\n(\A)$) contains a wave.  Let $\A = \{\A_1,\A_2\}$ and $\A_3 = \Delta_{\omega}$, and suppose that $\pd \omega \subset \A_1^{\pm}$.  Then $\A' = \{\A_2,\A_3\}$ is related to $\A$ by a handle slide, and we say that the diagram $(\A',\n)$ is the  result of \emph{wave surgery} (or just \emph{surgery}, for short) on $\alpha$ along $\omega$.

\begin{proposition}\cite{hot}\label{wavesurgery}
Let $(\A,\n)$ be a Heegaard diagram for $S^3$ that contains a wave based in $\A$, and let $(\A', \n)$ be the diagram obtained by wave surgery on $\alpha$.  Then $\io(\A',\n)<\io(\A,\n)$.
\end{proposition}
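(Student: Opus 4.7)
The plan is to show $|\A_3 \cap \n| < |\A_1 \cap \n|$, which together with $\io(\A, \n) = |\A_1 \cap \n| + |\A_2 \cap \n|$ and $\io(\A', \n) = |\A_2 \cap \n| + |\A_3 \cap \n|$ yields the claim. First I would describe $\A_3$ explicitly. Assume $\pd \omega \subset \A_1^+$ (the case $\A_1^-$ is symmetric), and let the two endpoints of $\omega$ split $\A_1$ into subarcs $\gamma_1$ and $\gamma_2$. The wave $\omega$ cuts $\Sigma_\A$ into two planar regions $D_1$ and $D_2$, and since $\omega$ is essential, neither is a disk; an Euler characteristic count then shows one, say $D_1$, is an annulus and the other is a three-holed sphere. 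A brief case analysis of how $\{\A_1^-, \A_2^+, \A_2^-\}$ is partitioned between $D_1$ and $D_2$ will show that, for $\A_3 = \Delta_\omega$ to be non-separating in $\Sigma$ (which is necessary for $\A' = \{\A_2, \A_3\}$ to be a cut system), the second boundary component of $D_1$ must be $\A_2^+$ or $\A_2^-$, rather than $\A_1^-$. Then $\A_3$ lies in $D_2$, parallel to $\omega \cup \gamma_2^+$, so in $\Sigma$ the curve $\A_3$ is isotopic to $\omega \cup \gamma_2$, pushed slightly off $\A_1$.

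Since $\omega$ is disjoint from $\n$ by assumption, this gives $|\A_3 \cap \n| \leq |\gamma_2 \cap \n|$, and subtracting from $|\A_1 \cap \n| = |\gamma_1 \cap \n| + |\gamma_2 \cap \n|$ yields
\[ |\A_1 \cap \n| - |\A_3 \cap \n| \geq |\gamma_1 \cap \n|, \]
so it suffices to show $|\gamma_1 \cap \n| \geq 1$. I would argue by contradiction. If $\gamma_1^+$ carried no endpoint of any arc of $\n \cap \Sigma_\A$, then, because $\omega$ is also disjoint from $\n$, every arc of $\n \cap D_1$ would have both endpoints on the remaining boundary component $\A_2^{\pm}$ of $D_1$. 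But essential arcs in an annulus connect distinct boundary components, and every arc of $\n \cap \Sigma_\A$ is essential in $\Sigma_\A$ (otherwise a bigon would contradict the minimality built into $|\A \cap \n|$). Hence $D_1$ would contain no $\n$-arcs, and since $\A_2^{\pm}$ is adjacent only to $D_1$ in $\Sigma_\A$, this would force $\A_2 \cap \n = \emptyset$.

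To finish, I would invoke the determinant obstruction of Lemma \ref{determ}. A non-separating simple closed curve in $\Sigma$ disjoint from the cut system $\n$ must be boundary-parallel when viewed in the four-holed sphere $\Sigma_\n$, since the essential curves of $\Sigma_\n$ are separating in $\Sigma$; so $\A_2$ would be isotopic to $\n_1$ or $\n_2$. This would make a row of the intersection matrix $M(\A, \n)$ vanish, contradicting $|\det M(\A, \n)| = 1$. The hardest step is the topological case analysis that pins down the configuration of $D_1$ and $D_2$ and identifies $\A_3$ with $\omega \cup \gamma_2$; once that description is in hand, the intersection count together with the determinant obstruction closes things out directly.
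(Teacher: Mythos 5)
The paper does not actually prove this proposition---it is quoted from \cite{hot}---so there is no internal proof to compare against; the closest analogue in the text is the weight computation in Lemma \ref{gwave} and Figure \ref{fig:TypeIIIWave}, which encodes the same idea in the language of reduced Whitehead graphs. Your argument is a correct, self-contained proof along the standard route: identify $\A_3 = \Delta_\omega$ with a push-off of $\omega \cup \gamma_2$, so that $\io(\A_3,\n) \leq |\gamma_2 \cap \n|$ while $\io(\A_1,\n) = |\gamma_1 \cap \n| + |\gamma_2 \cap \n|$, and then show $|\gamma_1 \cap \n| \geq 1$. Your treatment of this last point is exactly what the figure in the paper glosses over, and it is right: if no $\n$--arc met $\gamma_1^+$, every $\n$--arc in the annulus $D_1$ would have both endpoints on $\A_2^{\pm}$ and hence cut off a bigon with $\A_2$, so $D_1$ would contain no $\n$--arcs at all and $\A_2 \cap \n = \emptyset$, which is impossible for an $S^3$ diagram. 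Note that strictness genuinely requires the $S^3$ hypothesis, and you invoke it in the right place. The preliminary case analysis (the annulus side of $\omega$ must contain $\A_2^+$ or $\A_2^-$, else $\Delta_\omega$ separates $\Sigma$ and $\A'$ is not a cut system) is also correct and worth spelling out.

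One sentence should be deleted, however: the claim that ``the essential curves of $\Sigma_\n$ are separating in $\Sigma$'' is false. An essential curve in the four-holed sphere $\Sigma_\n$ separating $\{\n_1^+,\n_2^+\}$ from $\{\n_1^-,\n_2^-\}$ becomes non-separating in $\Sigma$ after regluing (such curves are precisely the results of banding $\n_1$ to $\n_2$), so a non-separating curve disjoint from $\n$ need not be isotopic to $\n_1$ or $\n_2$. Fortunately the detour is unnecessary: $\A_2 \cap \n = \emptyset$ already gives $\A_2 \cdot \n_1 = \A_2 \cdot \n_2 = 0$, so a row of $M(\A,\n)$ vanishes, $\det M(\A,\n) = 0$, and Lemma \ref{determ} is contradicted directly. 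With that sentence excised, the proof stands.
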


Thus, it is desirable for a nontrivial Heegaard diagram to contain a wave, as it allows for a straightforward simplification of the diagram.  Unfortunately, this is not the case for Heegaard diagrams of $S^3$ of arbitrary genera \cite{ochiai:Whitehead}.  However, the situation in genus two is much more manageable, as is shown by the following theorem of Homma, Ochiai, and Takahashi.

\begin{theorem}\cite{hot}\label{wave}
Every nontrivial genus two Heegaard diagram for $S^3$ contains a wave.
\end{theorem}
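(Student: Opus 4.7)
The plan is to analyze the combinatorial structure of the Whitehead graph $\Sigma_\A(\n)$ by case analysis, using the $S^3$ hypothesis to eliminate configurations without a wave. First I would recast the statement in graph language: a wave based in $\A$ exists precisely when there is a properly embedded arc in $\Sigma_\A$ with both endpoints on a common boundary circle that is disjoint from every arc of $\n \cap \Sigma_\A$. Translating to the reduced graph $G_\A(\n)$, this amounts to $G_\A(\n)$ having a cut vertex (or an analogous separating feature) with respect to the four-vertex structure $\{\A_1^\pm, \A_2^\pm\}$.

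Next, I would argue by contradiction. Suppose that neither $G_\A(\n)$ nor $G_\n(\A)$ admits such a separating vertex. The reduced Whitehead graphs on four vertices without a cut vertex fall into a short explicit list (complete bipartite, complete, and a few intermediate graphs). For each candidate, I would apply Lemma \ref{determ}, the determinant condition $|\det M(\A, \n)| = 1$, together with the disjointness requirements $\io(\A_1,\A_2) = \io(\n_1,\n_2) = 0$, to constrain which edge weights $w_\n(e)$ are compatible with $(\A,\n)$ representing $S^3$. The parameterization of seams by $\overline{\Q}$ and the intersection formula of Lemma \ref{inter} give a clean language for tracking the arcs of $\n \cap \Sigma_\A$ modulo the hyperelliptic involution $J$, since cross-seams come in $J$-symmetric pairs.

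The main obstacle is the volume of casework combined with the subtlety of inessential intersections: while $G_\A(\n)$ records edge multiplicities, it forgets how individual arcs sit relative to one another inside $\Sigma_\A$, and can therefore mask waves that are visible only in $\Sigma_\A(\n)$ but not $G_\A(\n)$. I expect the hardest step to be showing that the cut-vertex-free, $S^3$-compatible configurations are exactly the trivial ones (with $\io(\A, \n) = 2$). Ruling out the remaining candidates likely requires a genuinely diagrammatic argument rather than a purely graph-theoretic one: for each reduced-graph configuration, one lifts to an explicit free-group presentation $\la a_1, a_2 \mid w_1, w_2 \ra$ (with $a_i$ dual to $\A_i$), and uses the fact that this must present the trivial group together with a Nielsen/handle-slide analysis to produce either a length-reducing move — corresponding geometrically to a wave on one side or the other — or a direct contradiction with the determinant condition.
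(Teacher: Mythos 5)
First, note that the paper does not prove Theorem \ref{wave} at all: it is imported wholesale from Homma--Ochiai--Takahashi \cite{hot}, so there is no internal proof to compare your attempt against. Evaluated on its own terms, your proposal is a plan rather than a proof, and it has two concrete gaps. The smaller one is the translation of ``wave'' into graph language: the criterion the paper actually uses is that $(\A,\n)$ has a wave based in $\A$ if and only if some vertex of the reduced graph $G_\A(\n)$ has valence one, equivalently $G_\A(\n)$ is of Type III with the relevant weights vanishing; this is not the same as having a cut vertex, and the configurations you would need to rule out are the Type I and Type II graphs of Figure \ref{fig:WhGraphs} (together with Type III graphs with all weights nonzero), not the list of $2$-connected graphs on four vertices.

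The essential gap is in your last step. You propose to rule out the remaining configurations by passing to a presentation $\la a_1,a_2 \mid w_1, w_2\ra$ of the trivial group and producing a length-reducing Nielsen move that ``corresponds geometrically to a wave on one side or the other.'' That correspondence is precisely the content of the theorem, and it is exactly what fails in general: Ochiai's examples \cite{ochiai:Whitehead} (cited in the paper immediately before the theorem) give nontrivial genus-three Heegaard diagrams of $S^3$ with no wave on either side, even though the associated balanced presentations of the trivial group admit length-reducing Nielsen transformations. Algebraic cancellation in the free group need not be realizable by an embedded arc in the surface, so any argument along your proposed lines must isolate what is special about genus two --- e.g.\ the hyperelliptic involution fixing every curve, or the fact that $\Sigma_\A$ is a four-punctured sphere whose seams are classified by slopes in $\overline{\Q}$ via Lemma \ref{inter}. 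Your sketch names these tools but never uses them to close the loop from ``no wave on either side'' to a contradiction with $|\det M(\A,\n)|=1$; that closing argument is the entire difficulty of the Homma--Ochiai--Takahashi theorem, and as written the proposal would ``prove'' the false higher-genus statement equally well.
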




Wave surgery has a dual notion, known as \emph{banding}.  Let $\A' = \{\A_2,\A_3\}$ and suppose $\upsilon$ is an arc with one endpoint on each of $\A_2$ and $\A_3$ and whose interior is disjoint from $\A'$ (we call $\upsilon$ a \emph{band}).  Then one component $\A_1$ of $\pd \overline{\nu(\A' \cup \upsilon)}$ is not isotopic to $\A_2$ or $\A_3$, and we that say $\A_1$ is the result of banding $\A_2$ to $\A_3$.  For every wave $\omega$ such that wave surgery on $\A = \{\A_1,\A_2\}$ yields $\A' = \{\A_2,\A_3\}$, there is a dual band $\upsilon$ such that $\A_1$ is the result of banding $\A_2$ to $\A_3$; thus, banding and wave surgery are inverse processes.

We now state several general facts about the reduced Whitehead graphs corresponding to genus two Heegaard diagrams.  First, every such graph is isomorphic to one of the graphs shown in Figure \ref{fig:WhGraphs} \cite{ochiai:Whitehead}, which we henceforth refer to as Type I, Type II, and Type III, as labeled in the figure.  We will automatically assume that a graph of one of these types inherits the edge labelings shown.  Note that if all edge weights are nonzero, then the three families are mutually exclusive; however, if some edge weights are zero there may be graphs that fall into more than one family.  In addition, the existence of the involution $J$ implies that $w_{\n}(a_+) = w_{\n}(a_-)$ and $w_{\n}(b_+) = w_{\n}(b_-)$ and so we may denote these weights $w_{\n}(a)$ and $w_{\n}(b)$ without ambiguity.

\begin{figure}[h!]
\centering
\includegraphics[scale = .9]{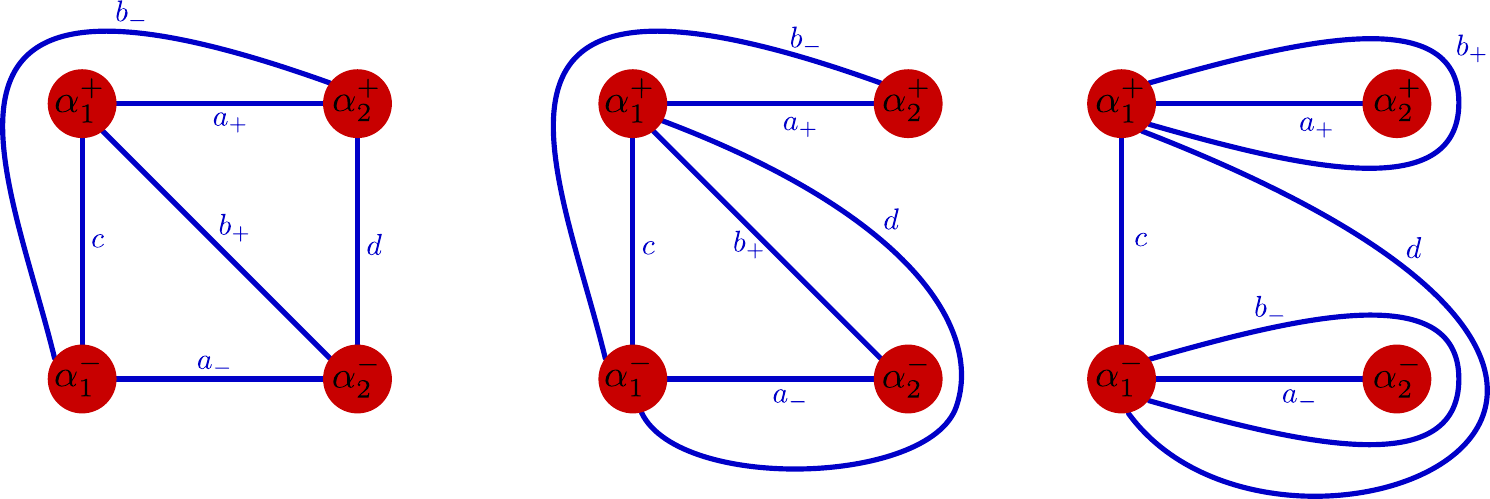}
\put(-345,-15){Type I}
\put(-205,-15){Type II}
\put(-78,-15){Type III}
\caption{The three possible reduced Whitehead graphs $G_\A(\n)$ corresponding to a genus two Heegaard splitting.}
\label{fig:WhGraphs}
\end{figure}

If a reduced Whitehead graph $G_{\A}(\n)$ contains exactly four edges and one 4-cycle, we will call $G_{\A}(\n)$ a \emph{square graph}.  Equivalently, a square graph is a Type I graph such that $w_{\n}(b) = 0$ and all other edges have nonzero weights.  If $G_{\A}(\n)$ is a graph of any type such that $w_{\n}(a),w_{\n}(c) \neq 0$ while $w_{\n}(b)=w_{\n}(d) = 0$, we say that $G_{\A}(\n)$ is a \emph{c-graph} (as its shape looks like the letter $c$). \\

\begin{figure}[h!]
\centering
\includegraphics[scale = 1.8]{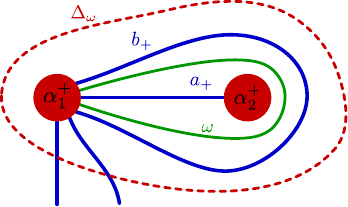}
\caption{An $\A$-wave $\omega$ in a Type III graph.  If the weight of $a$ or $b$ is nonzero, then $\io(\Delta_{\omega},\n) < \io(\A_1,\n)$.}
\label{fig:TypeIIIWave}
\end{figure}

The next lemma provides basic information about some of the weights in $G_{\A}(\n)$ when $(\A,\n)$ is a diagram for $S^3$.

\begin{lemma}[\cite{hot}, Lemma 9]\label{crossseam}
If $(\A,\n)$ is a nontrivial diagram for $S^3$, then $\Sigma_{\A}(\n)$ contains a cross-seam.
\end{lemma}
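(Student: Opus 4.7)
The plan is to prove the contrapositive: if $\Sigma_\A(\n)$ has no cross-seam, then $(\A,\n)$ is trivial. Absence of cross-seams forces every arc of $\n \cap \Sigma_\A$ to have both endpoints on a single pair $\{\A_i^+,\A_i^-\}$. Traversing a component $\n_j$ cyclically through $\Sigma_\A$, consecutive arcs share a crossing of $\n_j$ with some $\A_i$, contributing endpoints on both $\A_i^+$ and $\A_i^-$; the no-cross-seam hypothesis then forces the next arc to remain based at the same $\A_i$. Inducting around the loop shows that every arc of $\n_j$ is based at a single $\A_{i(j)}$, so $\n_j \cap \A_{i'} = \emp$ for $i' \neq i(j)$.

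Case-splitting on the map $j \mapsto i(j)$: if $i(1) = i(2)$, then one row of the intersection matrix $M(\A,\n)$ vanishes, contradicting $|\det M| = 1$ from Lemma \ref{determ}. Hence $i$ is a bijection, and after relabeling $\n$ I may assume $\n_1 \cap \A_2 = \emp$ and $\n_2 \cap \A_1 = \emp$. The matrix $M$ is then diagonal, and Lemma \ref{determ} forces $|\A_1 \cdot \n_1| = |\A_2 \cdot \n_2| = 1$. It remains to upgrade these algebraic statements to geometric, showing $\io(\A_i, \n_i) = 1$, which will yield the trivial intersection pattern $\io(\A_i, \n_j) = \delta_{ij}$.

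For the geometric upgrade, I consider $\n_1$; the case of $\n_2$ is symmetric. Cut the handlebody $H_\A$ bounded by $\Sigma$ along a meridian disk $D_{\A_2}$ chosen disjoint from $D_{\A_1}$, producing a solid torus $V$ whose boundary torus $\partial V$ is obtained from $T = \Sigma \setminus \nu(\A_2)$ by capping its two boundary circles with disks. Both $\A_1$ and $\n_1$ lie in $T \subset \partial V$. Since $D_{\A_1} \subset V$, the curve $\A_1$ is a meridian of $V$ with slope $(1,0)$ on $\partial V$. The class of $\n_1$ in $\pi_1(V) = \Z$ coincides with its longitudinal coordinate on $\partial V$, and matching with the algebraic intersection gives $\n_1$ a slope $(p, \pm 1)$, meeting $\A_1$ in exactly one point on $\partial V$. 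Since $\A_1$ and $\n_1$ remain essential after capping, standard surface-topology identifies this minimal torus intersection with the minimal intersection in $T$ (bigons in $\partial V$ can be pushed off the capping disks), yielding $\io(\A_1, \n_1) \leq 1$; combined with the lower bound $\io(\A_1, \n_1) \geq |\A_1 \cdot \n_1| = 1$, this gives equality.

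The main obstacle will be this geometric upgrade. Moving from the torus $\partial V$ back to the subsurface $T$ (and hence to $\Sigma$) demands that the minimal torus intersection be realized by representatives of $\A_1$ and $\n_1$ that avoid the capping disks, a routine but technical surface-topology argument. Once $\io(\A_i, \n_i) = 1$ is established, the trivial pattern $\io(\A_i, \n_j) = \delta_{ij}$ is achieved, contradicting the nontriviality hypothesis and completing the proof.
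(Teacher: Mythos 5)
The paper does not actually prove this lemma; it is imported from Homma--Ochiai--Takahashi, so your argument has to stand on its own. The first half does: the observation that the absence of cross-seams propagates around each $\n_j$ to force $\n_j \cap \A_{i'} = \emp$ for one index, and the use of Lemma \ref{determ} to rule out $i(1)=i(2)$ and to pin down $|\A_i \cdot \n_i| = 1$, are both correct (you should also dispose of the degenerate case $\n_j \cap \A = \emp$, which likewise kills the determinant). The problem is the ``geometric upgrade.'' Your argument for $\io(\A_1,\n_1)=1$ uses only $\A_1$, $\A_2$, and $\n_1$, never $\n_2$, and in that generality the conclusion is false. Take $\n_1 \cap \Sigma_\A$ to consist of one seam from $\A_1^+$ to $\A_1^-$ together with a wave based at $\A_1^+$ that separates $\{\A_2^+,\A_2^-\}$ from $\A_1^-$ and its image under the involution $J$; these three disjoint essential arcs glue up to a simple closed curve with $\n_1 \cap \A_2 = \emp$, $\A_1 \cdot \n_1 = \pm 1$, and $\io(\A_1,\n_1) = 3$ (no bigons, since the arcs are essential in $\Sigma_\A$). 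On the capped-off torus $\pd V$ the two waves become inessential --- each now cuts off a disk containing the capping disks --- so $\io_{\pd V}(\A_1,\n_1) = 1$ while $\io_{\Sigma}(\A_1,\n_1)=3$. This is exactly the failure mode your parenthetical dismisses: the bigons in $\pd V$ genuinely enclose a capping disk and cannot be pushed off it. So the step ``minimal intersection in $\pd V$ equals minimal intersection in $T$'' is not a routine technicality; it is false, and the whole upgrade must be argued differently.

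The statement you are trying to upgrade is nonetheless true, but the proof has to use both pairs of curves at once. One way: let $N_1 = \overline{\nu(\A_1 \cup \n_1)}$, a connected subsurface with $\chi(N_1) = -\io(\A_1,\n_1)$ and genus at least one (since $\A_1 \cdot \n_1 = \pm 1$). The curves $\A_2$ and $\n_2$ avoid $N_1$ and intersect each other, so they lie in a single component $C$ of $\Sigma \setminus \Int N_1$, which also has genus at least one because $\A_2 \cdot \n_2 = \pm 1$. Additivity of genus over the decomposition of the genus two surface $\Sigma$ into $N_1$ and its complementary components forces $N_1$ and $C$ to have genus exactly one, every other complementary piece to be planar, the dual graph to be a tree, and hence $C$ to be a one-holed torus. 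In a one-holed torus geometric and algebraic intersection numbers of essential simple closed curves agree, so $\io(\A_2,\n_2) = 1$; then $\nu(\A_2 \cup \n_2)$ is a one-holed torus whose complement is another one-holed torus containing $\A_1 \cup \n_1$, giving $\io(\A_1,\n_1) = 1$ as well. With that replacement your proof closes up; note that, like your intended argument, it only uses the homological input of Lemma \ref{determ}, so it actually proves the cross-seam lemma for any genus two diagram of a homology sphere.
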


Observe that if $(\A,\n)$ contains a wave based in $\A$, then the graph $G_{\A}(\n)$ must be of Type III. A graph of Type I or II that admits a wave must also fall under the Type III category.  Figure \ref{fig:TypeIIIWave} shows the obvious wave in this case.  Equivalently, $(\A,\n)$ contains a wave based in $\A$ if and only if a vertex in $G_{\A}(\n)$ has valence one (recall that $G_{\A}(\n)$ is a reduced graph).  Note that Theorem \ref{wave} implies that if $(\A,\n)$ is a Heegaard diagram for $S^3$, then there is a wave based in \emph{either} $\A$ or $\n$.  However, if $G_{\A}(\n)$ is a c-graph, we can say something stronger:


\begin{lemma}\label{12gon}
If $(\A,\n)$ is a diagram for $S^3$ and $G_{\A}(\n)$ is a c-graph, then $(\A,\n)$ contains a wave based in $\A$ and a wave based in $\n$.
\begin{proof}
In this case, $\Sigma \setminus \nu(\A \cup \n)$ contains some number of rectangular regions and one 12--sided region, which we will call $U$, where the boundary of $U$ alternates between a set of six arcs in $\A$ and a set of six arcs in $\n$.  See Figure \ref{fig:cGraphWave}.  A choice of orientation $\A$ and $\n$ orients each arc in $\partial U$ clockwise or counter-clockwise.  Moreover, each $\A$--arc comes from either $\A_1$ or $\A_2$.  Therefore, there are four labeling options for each $\alpha$--arc.  However, since there are six  $\A$--arcs, at least two have a common labeling, and an arc $\omega_\A$ in $U$ connecting two such arcs is a wave based in $\A$.  See Figure \ref{fig:cGraphWave}(b).  A parallel argument shows that there is a wave $\omega_\n$ in $U$ based in $\n$.
\end{proof}
\end{lemma}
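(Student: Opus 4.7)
The plan is to localize both waves inside a single large complementary region of $\A \cup \n$ in $\Sigma$ and extract them via a pigeonhole argument.

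First, I would show that when $G_{\A}(\n)$ is a c-graph, the decomposition of $\Sigma$ given by $\A \cup \n$ has exactly one non-rectangular region $U$, a 12-gon whose boundary alternates between $6$ arcs of $\A$ and $6$ arcs of $\n$. This should follow from an Euler-characteristic count: the cellulation of $\Sigma$ induced by $\A \cup \n$ has $V = |\A \cap \n|$ vertices and $E = 2|\A \cap \n|$ edges, so there are $F = |\A \cap \n| - 2$ two-cells, while the total side-count $2E = 4|\A \cap \n|$ leaves a deficit of exactly $8$ sides beyond what pure rectangles could account for. The c-graph hypothesis (only two of the four reduced Whitehead edges carry positive weight) should force all of this deficit to be concentrated in a single region: each ``missing'' edge in $G_{\A}(\n)$ causes adjacent corners at the corresponding vertices of $\Sigma_{\A}$ to merge into one larger polygon rather than remaining as separate rectangles.

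Second, having established the 12-gon $U$, I would apply pigeonhole to the $6$ arcs of $\A$ appearing in $\pd U$. After fixing orientations on $\A_1$ and $\A_2$, each such arc carries two pieces of data: which component $\A_i$ it belongs to, and whether it is traversed with or against the chosen orientation as one walks along $\pd U$. This gives at most $4$ distinct labels, so among the $6$ $\A$-arcs on $\pd U$ at least two must share a label. A properly embedded arc $\omega_{\A} \subset U$ joining two such $\A$-arcs is disjoint from $\n$ (since $\Int U \cap \n = \emp$) and has its endpoints lying on the same side of the same $\A_i$, so $\omega_{\A}$ is a wave based in $\A$. Running the identical argument on the $6$ $\n$-arcs of $\pd U$ produces a wave $\omega_{\n}$ based in $\n$.

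The main obstacle is the first step: verifying that the c-graph hypothesis forces precisely one $12$-gon rather than, say, two octagons or some other combination summing to the required $8$-side deficit. I expect this to require a careful accounting of how the two surviving ``bundles'' of parallel arcs in $\Sigma_{\A}$ meet the four punctures and what corners of complementary regions they produce at each puncture; in particular one wants to see that the two non-trivial edges of $G_{\A}(\n)$ contribute only rectangular strips to $\Sigma \setminus \nu(\A \cup \n)$, so that all non-rectangular behavior is funneled into the remaining region. Once $U$ is isolated, the pigeonhole step and its symmetric counterpart are essentially immediate.
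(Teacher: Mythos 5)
Your argument is essentially the paper's: the authors likewise isolate a single 12--gon $U$ and run exactly your pigeonhole on its six $\A$--arcs (four labels coming from component and orientation) to produce $\omega_\A$, with the symmetric argument giving $\omega_\n$. The one step you flag as an obstacle --- one 12--gon versus, say, two octagons --- is simply asserted in the paper (with a figure), but it closes quickly along the lines you sketch: a c-graph has exactly three nonzero edges $a_+$, $a_-$, $c$ in $G_\A(\n)$, so with $n = 2w_{\n}(a) + w_{\n}(c)$ arcs in $\Sigma_\A$ the complement consists of $n-2$ disks, of which $n-3$ are the rectangular strips between consecutive parallel arcs within a single edge; that leaves exactly one non-rectangular region, and your side count (a surplus of $8$ sides over an all-rectangle decomposition) then forces it to be a 12--gon. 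For contrast, a square graph has four nonzero edges, hence $n-4$ strips and two leftover regions sharing the $8$ surplus sides --- the two octagons that appear in Section \ref{sec:digress} --- so the dichotomy really is decided by counting edges of the reduced Whitehead graph, not by the Euler characteristic alone, exactly as you suspected.
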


\begin{figure}[h!]
\centering
\includegraphics[scale = .7]{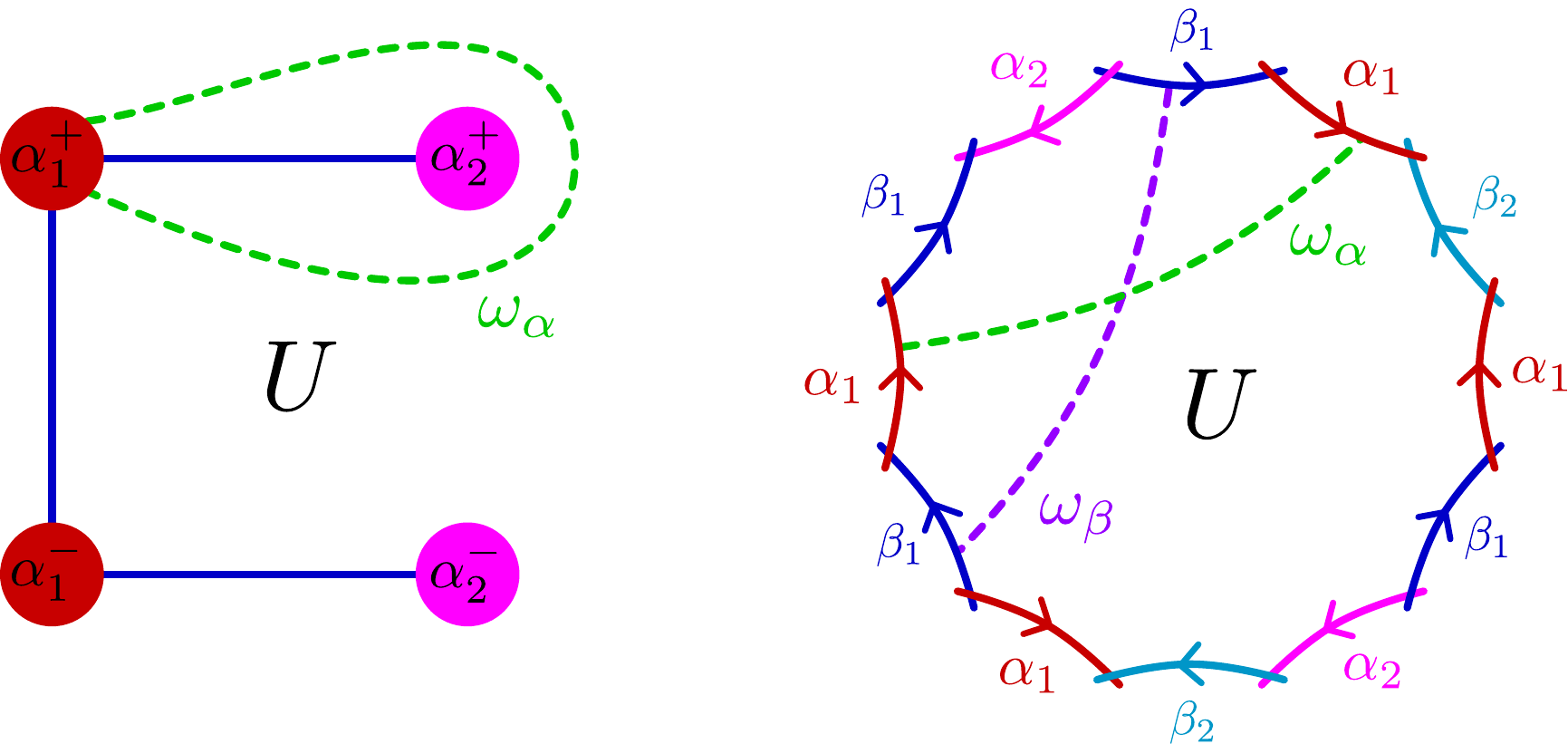}
\put(-300,-20){ (a)}
\put(-90,-20){ (b)}
\caption{When $G(\A,\n)$ is a c-graph, there are waves in $\A$ and $\n$.  The obvious wave $\omega_\A$ is shown in (a), and the rendering of the 12--gon $U$ shown in (b) purports the existence of $\omega_\n$.}
\label{fig:cGraphWave}
\end{figure}

In the case-by-case arguments that follow, we attempt to rule out the existence of certain waves for a Heegaard diagram, so we lay the groundwork for that strategy here.  If two oriented curves meet in two points, we say that these points are \emph{coherently oriented} if they have the same sign and \emph{oppositely oriented} if they have opposite sign.  Note that this definition is independent of the choice of orientations on the curves.  If $(\A,\n)$ is a Heegaard diagram and there is a choice of orientations on curves in $\A$ and $\n$ so that all points of $\A \cap \n$ are coherently oriented, we call $(\A,\n)$ a \emph{positive Heegaard diagram}. 

Suppose that $(\A,\n)$ is a Heegaard diagram and that $\n_1$ contains an arc $a$ that meets $\A_1$ only in its oppositely oriented endpoints and meets $\A_2$ in one point in its interior.  We call an arc $a$ satisfying these conditions a \emph{wave-busting arc}.  Similarly, if $\n$ contains two arcs $c$ and $d$ such that $c$ meets $\A_1$ only in its coherently oriented endpoints and avoids $\A_2$, while $d$ meets $\A_2$ only in its coherently oriented endpoints and avoids $\A_1$, we say that the pair $(c,d)$ is a \emph{wave-busting pair}.  The next lemma makes clear our use of the term ``wave-busting."
\begin{lemma}\label{sat}
If $(\A,\n)$ is a nontrivial Heegaard diagram such that $\n$ contains a wave-busting arc $a$ or a wave-busting pair $(c,d)$, then $(\A,\n)$ does not contain a wave based in $\A$.
\begin{proof}
A wave-busting arc $a$ contributes two cross-seams to $G_{\A}(\n)$, one from $\A_1^{\pm}$ to $\A_2^+$ and one from $\A_1^{\pm}$ to $\A_2^-$.  It follows by the symmetry of $G_{\A}(\n)$ that each vertex has valence at least two, and thus $G_{\A}(\n)$ cannot be a graph of Type III.  On the other hand, a wave-busting pair contributes seams connecting $\A_1^+$ to $\A_1^-$ and $\A_2^+$ to $\A_2^-$, again implying that $G_{\A}(\n)$ is not a Type III graph.
\end{proof}
\end{lemma}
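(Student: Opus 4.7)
The plan is to show that, in either case, every vertex of the reduced Whitehead graph $G_\A(\n)$ has valence at least two.  By the equivalence stated just before the lemma (a wave based in $\A$ exists if and only if some vertex of $G_\A(\n)$ has valence one), this rules out the existence of a wave based in $\A$.

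The key preliminary I would establish first is a sign-versus-side correspondence for the cutting operation $\Sigma \to \Sigma_\A$: if $\xi$ is a sub-arc of a curve in $\n$ whose two endpoints meet $\A_i$ transversely, then the two endpoints lie on the same copy of $\A_i$ in $\Sigma_\A$ if and only if they are oppositely oriented, and on opposite copies if and only if they are coherently oriented.  This follows from a short local orientation argument about which side of $\A_i$ a sub-arc lies on near each of its endpoints (tracking the sign of the transverse intersection).

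For a wave-busting arc $a \subset \n_1$, the correspondence places both endpoints of $a$ on the same copy of $\A_1$, say $\A_1^+$.  Cutting $a$ at its single interior intersection with $\A_2$ produces two cross-seams, and the two endpoints created at this cut lie on the distinct copies $\A_2^+$ and $\A_2^-$.  Hence $G_\A(\n)$ contains an edge from $\A_1^+$ to $\A_2^+$ and another from $\A_1^+$ to $\A_2^-$.  Applying the hyperelliptic involution $J$, which preserves $\n$ setwise and swaps $\A_i^+ \leftrightarrow \A_i^-$, produces the $J$-paired edges from $\A_1^-$ to $\A_2^-$ and from $\A_1^-$ to $\A_2^+$.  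All four vertices of $G_\A(\n)$ then have valence at least two.

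For a wave-busting pair $(c,d)$, the sign correspondence combined with the absence of interior $\A$-crossings realizes $c$ as a single seam from $\A_1^+$ to $\A_1^-$ and $d$ as a single seam from $\A_2^+$ to $\A_2^-$, contributing edges of $G_\A(\n)$ connecting $\A_1^+$ to $\A_1^-$ and $\A_2^+$ to $\A_2^-$, respectively.  I would then inspect the three reduced Whitehead graph families depicted in Figure \ref{fig:WhGraphs} and argue that Type III cannot simultaneously contain both such self-$J$-paired seams: in Type III, the unique edge at any valence-one vertex is prescribed by the graph's combinatorial shape together with $J$-symmetry, and no such configuration is compatible with the simultaneous presence of both an $\A_1^+$-to-$\A_1^-$ and an $\A_2^+$-to-$\A_2^-$ seam.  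The principal obstacle is this final combinatorial check; I expect it to reduce to a short case analysis once the explicit form of Type III is read off the figure, while the sign bookkeeping in the preliminary correspondence, though routine, must be handled carefully.
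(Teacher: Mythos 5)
Your proposal is correct and follows the paper's proof essentially verbatim: the wave-busting arc, via the sign-versus-side correspondence and the involution $J$, contributes cross-seams giving every vertex of $G_{\A}(\n)$ valence at least two, while the wave-busting pair contributes the seams $\A_1^+$--$\A_1^-$ and $\A_2^+$--$\A_2^-$, which are incompatible with a Type III graph. The only differences are presentational: you spell out the dictionary between intersection signs and which copy of $\A_i$ an endpoint lands on (which the paper leaves implicit), and you defer the final combinatorial check for the pair case, which the paper likewise asserts without further detail.
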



To conclude this section, we discuss how homeomorphisms of $\Sigma_{\A}$ can be used to reduce the number of cases in the arguments that follow.  The five homeomorphisms most relevant to our proof are shown in Figure \ref{fig:Involutions}.   The mapping class group of a four-punctured sphere is generated by $PGL_2(\Z)$ and two involutions  \cite{farb-margalit:primer}, the first  of which is the hyperelliptic involution $J$ discussed above, and the second of which we will call $J'$ and is depicted in Figure \ref{fig:Involutions}(a).  Note that $J$ and $J'$ preserve the slopes of arcs in $\Sigma_\A$, so to determine the behavior of slopes of arcs with respect to a given homeomorphism $\sigma$, we need only specify an element of $PGL_2(\Z)$, which we will call $M_{\sigma}$.

\begin{figure}[h!]
\centering
\includegraphics[scale = .7]{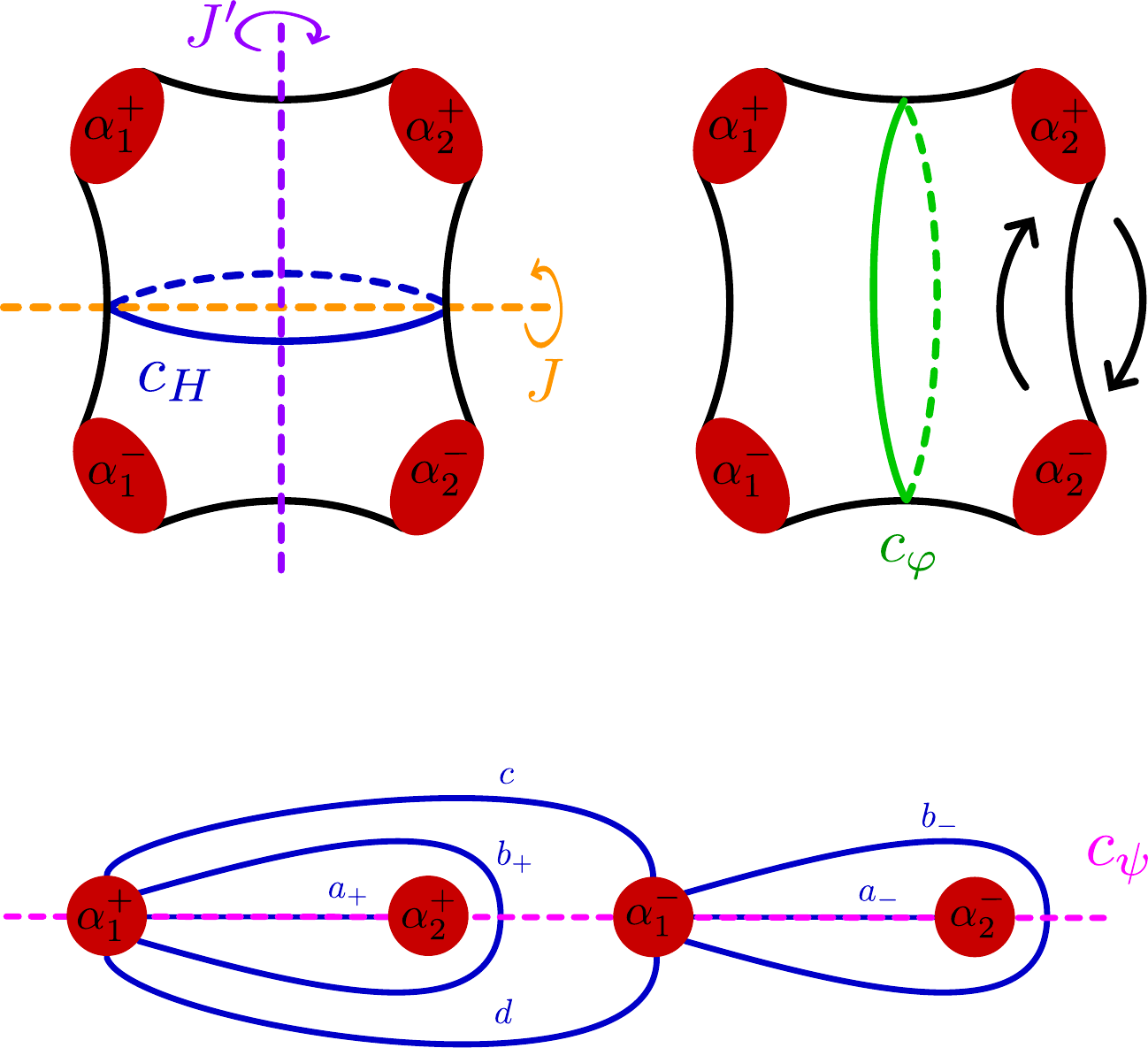}
\put(-205,90){(a)}
\put(-62,90){(b)}
\put(-140,-25){(c)}
\caption{The five homeomorphisms of the four-punctured sphere $\Sigma_\A$ that will be useful throughout the paper.  First, $J$ and $J'$ are rotations through $\pi$ radians about the axes shown in (a), and  $H$ is reflection across the plane that intersects the sphere in $c_H$. The homeomorphism $\varphi$ is a half-twist fixing $c_\varphi$ and exchanging $\A_2^+$ and $\A_2^-$, as shown in (b), and (c) shows  the action of $\psi$ as reflection across the plane intersecting the page in the line $c_\psi$.}
\label{fig:Involutions}
\end{figure}

Let $c_{\varphi} = \Delta_a$ for a seam $a$ of slope $\frac{1}{0}$, and let the homeomorphism $\varphi$ be given by fixing $\A_1^{\pm}$ and $c_{\varphi}$ and performing a half-twist on $\A_2^{\pm}$ which exchanges these two punctures, as in Figure \ref{fig:Involutions}(b).  The matrices for $\varphi$ and $\varphi^{-1}$ are given by
\[ M_{\varphi} = \begin{pmatrix}
1 & 1 \\ 0& 1 \end{pmatrix} \quad \text{ and } \quad
M_{\varphi^{-1}} = \begin{pmatrix}
1 & -1 \\ 0 & 1 \end{pmatrix}.\]

The remaining two homeomorphisms  reverse the orientation of $\Sigma_\A$.  Let $H$ be the horizontal reflection fixing the curve $c_H$ shown in Figure \ref{fig:Involutions}(a), and let $\psi$ be a reflection of $\Sigma_\A$ fixing the four arcs denoted $c_{\psi}$ in Figure \ref{fig:Involutions}(c).  Note that $\psi$ preserves a Type III graph; edges $a_{\pm}$ and $b_{\pm}$ are fixed, while edges $c$ and $d$ are exchanged.  The matrices for $H$ and $\psi$ are given by

\[ M_{H} = \begin{pmatrix}
-1 & 0 \\ 0& 1 \end{pmatrix} \quad \text{ and } \quad
M_{\psi} = \begin{pmatrix}
-1 & 0 \\ 2 & 1 \end{pmatrix}\]

\section{The complexity of a $(2,0)$--trisection diagram}\label{sec:start}

To classify $(2,0)$--trisections, we will show that a trisection diagram which minimizes a prescribed complexity over all diagrams corresponding to a fixed $(2,0)$--trisection must be a standard diagram. 
Define the \emph{complexity} $\calc(\A,\n,\g)$ of a trisection diagram $(\A,\n,\g)$ to be the triple given by
\[ \calc(\A,\n,\g) = (\io(\A_2,\g),\io(\A_1,\g),\io(\n,\g)).\]
For a $(2,0)$--trisection $X = X_1 \cup X_2 \cup X_3$, 
we call a trisection diagram \emph{minimal} if it minimizes the complexity $\mathcal C$ (with the dictionary ordering) among diagrams satisfying $\io(\A,\n)=2$.
A plurality of this paper (Sections \ref{sec:start}--\ref{sec:hard_case}) is devoted to proving the following proposition, which we will use later to classify $(2,0)$--trisections in Section \ref{sec:main}.

\begin{proposition}\label{claim1}
If $(\A,\n,\g)$ is minimal, then $\io(\A,\g) = 2$.
\end{proposition}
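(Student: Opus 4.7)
The plan is a proof by contradiction, reducing $\io(\A,\g)$ via wave surgery whenever $(\A,\g)$ is nontrivial. Assume $(\A,\n,\g)$ is minimal but $\io(\A,\g) > 2$. Lemma \ref{determ} then gives that $(\A,\g)$ is a nontrivial $S^3$ Heegaard diagram, so by Theorem \ref{wave} it admits a wave---either a $\g$-wave $\omega \subset \Sigma_\g$ disjoint from $\A$, or an $\A$-wave $\omega \subset \Sigma_\A$ disjoint from $\g$. In parallel, the hypothesis $\io(\A,\n) = 2$ together with Lemma \ref{determ} forces $(\A,\n)$ to be the trivial diagram; in $\Sigma_\A$, the curves of $\n$ then appear as two disjoint seams of fixed slope, one running from $\A_i^+$ to $\A_i^-$ for each $i$. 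This normalization will be crucial because any complexity-reducing move must preserve $\io(\A,\n) = 2$.

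First I treat the easier case, where a $\g$-wave exists. Wave surgery on $\g$ along $\omega$ (Proposition \ref{wavesurgery}) produces a cut system $\g'$ with $\io(\A,\g') < \io(\A,\g)$, and the Gay-Kirby handle-slide lemma guarantees that $(\A,\n,\g')$ presents the same trisection. Since $\A$ and $\n$ are untouched, $\io(\A,\n) = 2$ is preserved, so $(\A,\n,\g')$ is a legitimate competitor in the minimization. Up to relabeling $\A_1 \leftrightarrow \A_2$, I would argue that the drop in total $\io(\A,\g)$ produces a dictionary-order decrease in $\calc$, contradicting minimality. The subtle point here is that a single surgery could conceivably drop $\io(\A_1,\g)$ while raising $\io(\A_2,\g)$; I would handle this either by choosing the $\g$-wave carefully based on the shape of the reduced Whitehead graph $G_\A(\g)$, or by iterating surgeries and exploiting the rigidity one gets from $\io(\A_2,\g)$ already being minimal in its handle-slide class.

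The main obstacle is the case in which no $\g$-wave exists, so the only wave available is an $\A$-wave $\omega \subset \Sigma_\A$ disjoint from $\g$. Lemma \ref{12gon} then prevents $G_\A(\g)$ from being a c-graph, and the discussion around Figure \ref{fig:TypeIIIWave} forces $G_\A(\g)$ to be of Type III with a valence-one vertex supporting the obvious wave $\omega$. The difficulty is that $\omega$ is only required to miss $\g$; it may cross $\n$, so the result $\A'$ of $\A$-wave surgery typically has $\io(\A',\n) > 2$, disqualifying $(\A',\n,\g)$ from the minimization problem. To resolve this I would use the trivial structure of $(\A,\n)$ in $\Sigma_\A$ together with the slope/intersection calculus of Lemma \ref{inter} and the four-punctured-sphere symmetries $J$, $J'$, $H$, $\varphi$, $\psi$ to constrain the possible positions of $\omega$ relative to the two fixed $\n$-seams. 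The goal is to show that either $\omega$ can be isotoped off $\n$---yielding a legal complexity-reducing surgery and hence a contradiction---or else its necessary intersections with $\n$ are incompatible with $(\n,\g)$ being an $S^3$ Heegaard diagram, using the cross-seam requirement of Lemma \ref{crossseam} together with the wave-busting obstruction of Lemma \ref{sat} to derive a direct contradiction. This trisection-specific case analysis, which has to juggle all three Heegaard diagrams $(\A,\n)$, $(\n,\g)$, and $(\A,\g)$ simultaneously, is the technical heart of the proof and is presumably what Sections \ref{sec:digress} and \ref{sec:hard_case} are devoted to.
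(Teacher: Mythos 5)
Your overall architecture---argue by contradiction, split on whether the wave for $(\A,\g)$ guaranteed by Theorem \ref{wave} is based in $\g$ or in $\A$, resolve the first case by surgering $\g$, and push the second case toward showing $(\n,\g)$ cannot be a Heegaard diagram for $S^3$---does match the paper's. The easy case is essentially right: the subtlety you flag is settled in Lemma \ref{gwave} by a direct computation on the Type III graph $G_{\g}(\A)$ showing that surgery on $\g$ along such a wave can never increase $\io(\A_2,\g)$, so the lexicographic drop in $\calc$ comes for free without choosing the wave carefully or iterating.

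The genuine gap is in your dichotomy for the hard case. You propose: either $\omega$ can be isotoped off $\n$ (giving a legal surgery on $\A$), or the forced intersections of $\omega$ with $\n$ contradict $(\n,\g)$ being $S^3$. The first branch is essentially vacuous: by Lemma \ref{inter}, once the $\n$--seams have slope $\frac{m}{n}$ with $m\geq 3$, they meet every $\g$--arc and every wave arc in $\Sigma_\A$ essentially, so $\omega$ can never be pushed off $\n$; and in the residual case $m=1$ the paper does not isotope $\omega$ off $\n$ but performs the surgery anyway and then repairs $\io(\A',\n')=2$ by a handle slide replacing $\n_2$ with its 0--replacement (Lemma \ref{mone})---a move on $\n$ that your plan does not contemplate. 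More importantly, the second branch cannot be reached directly from the position of $\omega$: one must first dispose of the possibility that $(\n,\g)$ has a wave based in $\g$, which would permit a complexity-reducing surgery on $\g$ interacting with all three coordinates of $\calc$ (Lemma \ref{nogwave}, a four-case analysis of $G_{\g}(\A)$ near $\g_2^+$); your proposal never considers modifying $\g$ or $\n$ in the hard case at all. Only after that do wave-busting arguments (Lemmas \ref{bnotzero} and \ref{nothard}) rule out a wave based in $\n$ for all slopes except $\frac{m}{n}=-\frac{n\pm1}{n}$, and that exceptional family is not settled by local constraints on $\omega$ together with Lemmas \ref{crossseam} and \ref{sat}: it requires the reducing-sequence and alternating-adjacency machinery of Section \ref{sec:digress} and the winding-number and nested-wave-sequence analysis of Section \ref{sec:hard_case}. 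So while your outline points in the right general direction, the actual reduction mechanisms and the entire apparatus needed for the exceptional slopes are missing, and the dichotomy as stated would not close the argument.
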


To prove the proposition, we will suppose by way of contradiction that $(\A,\n,\g)$ is minimal, but $(\A,\g)$ is not trivial, so that $\io(\A,\g) > 2$.  We may also suppose that $(\n,\g)$ is not trivial; otherwise $\calc(\n,\g,\A) < \calc(\A,\n,\g)$ and $(\A,\n,\g)$ is not minimal.  One case is quite easy to rule out.

\begin{lemma}\label{gwave}
If $G_{\g}(\A)$ is a Type III graph, then $(\A,\n,\g)$ is not minimal.
\begin{proof}
In this case, $\Sigma_{\g}(\A)$ contains a wave, and we let $\g'$ be the result of surgery on $\g$ along this wave, so that $\g$ and $\g'$ are related by a handle slide. With Figure \ref{fig:TypeIIIWave} in mind, we compute
\begin{eqnarray}
\io(\A_2,\g) &=& 2w_{\A_2}(a) + 2w_{\A_2}(b) + w_{\A_2}(c) + w_{\A_2}(d) \label{eq1} \\
\io(\A_2,\g') &=& w_{\A_2}(a) + w_{\A_2}(c) + w_{\A_2}(d). \label{eq2}
\end{eqnarray}

Thus, if $w_{\A_2}(a)$ or $w_{\A_2}(b)$ is nonzero in $G_{\g}(\A)$, then $\io(\A_2,\g') < \io(\A_2,\g)$ and $\calc(\A,\n,\g') < \calc(\A,\n,\g)$.  Otherwise, we have $\io(\A_2,\g) = \io(\A_2,\g')$.  However, since $\g'$ is surgery on $\g$ along a wave, $\io(\A,\g') < \io(\A,\g)$, which implies that $\io(\A_1,\g') < \io(\A_1,\g)$, and once again, $\calc(\A,\n,\g') < \calc(\A,\n,\g)$.  In either case, $(\A,\n,\g)$ is not minimal.
\end{proof}
\end{lemma}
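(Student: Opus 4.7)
My plan is to exploit the fact that a Type III reduced Whitehead graph $G_{\g}(\A)$ has a valence-one vertex and therefore the diagram $(\g,\A)$ admits a $\g$-wave $\omega$ in $\Sigma_\g$, just as in Figure \ref{fig:TypeIIIWave}. Performing wave surgery on $\g$ along $\omega$ produces a new cut system $\g'$ related to $\g$ by a handle slide, so $(\A,\n,\g')$ still represents the same trisection and is a legitimate competitor for minimality. The goal is therefore to show $\calc(\A,\n,\g') < \calc(\A,\n,\g)$ in the dictionary order.

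The heart of the argument is a bookkeeping computation with the edge weights of $G_\g(\A)$. Reading off the arcs of $\A_2$ and using the $J$-symmetry $w(a_+) = w(a_-)$ and $w(b_+) = w(b_-)$, one obtains the two expressions for $\io(\A_2,\g)$ and $\io(\A_2,\g')$ stated in the lemma; the surgery has the effect of collapsing one copy of each of the $a$- and $b$-edges while leaving the $c$- and $d$-edges intact. If either $w_{\A_2}(a)$ or $w_{\A_2}(b)$ is nonzero, the first coordinate of complexity strictly drops. Otherwise both weights vanish and the first coordinates agree, but Proposition \ref{wavesurgery} still guarantees $\io(\A,\g') < \io(\A,\g)$, so the strict decrease must be absorbed by the second coordinate, yielding $\io(\A_1,\g') < \io(\A_1,\g)$. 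Either way, $\calc(\A,\n,\g') < \calc(\A,\n,\g)$, contradicting minimality.

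The main subtlety I anticipate is justifying the second intersection-number formula precisely, i.e.\ checking exactly how surgery along $\omega$ affects each edge of the Type III graph. The $c$- and $d$-edges should survive unchanged because their arcs lie away from the ``wave side'' of $\g_1$, while the $a$- and $b$-edges each lose one copy when $\g_1$ is replaced by $\Delta_\omega$. Once this accounting is pinned down, both cases fall out immediately from the dictionary ordering together with the previously established properties of wave surgery.
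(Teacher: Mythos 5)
Your proposal is correct and follows essentially the same route as the paper: wave surgery on $\g$ along the Type III wave, the two weight formulas for $\io(\A_2,\g)$ and $\io(\A_2,\g')$, and the case split with Proposition \ref{wavesurgery} absorbing the strict decrease into $\io(\A_1,\g)$ when the first coordinate is unchanged. One small caution on your verbal gloss: the formulas show the $b$-contribution drops from $2w_{\A_2}(b)$ to $0$ (both copies vanish), not merely losing one copy as with $a$, but since you invoke the exact stated expressions this does not affect the argument.
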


Using Equations (\ref{eq1}) and (\ref{eq2}), we also have the next lemma.
\begin{lemma}\label{g2wave}
If $G_{\g}(\A_2)$ is a Type III graph that contains a cross-seam, then $(\A,\n,\g)$ is not minimal.
\begin{proof}
In this case, we have $w_{\A_2}(a) \neq 0$, and so $(\A,\n,\g)$ is not minimal by the proof of Lemma \ref{gwave}.
\end{proof}
\end{lemma}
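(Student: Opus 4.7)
The plan is to exhibit a handle slide on $\g$ that strictly decreases the complexity, contradicting minimality. Since $G_\g(\A_2)$ is Type III, it has a valence-one vertex, and the arc of $\Sigma_\g(\A_2)$ sitting at that vertex is a wave $\omega$ in $\Sigma_\g$ with both endpoints on a single component of $\g$ and disjoint from $\A_2$. This is precisely the configuration pictured in Figure \ref{fig:TypeIIIWave}, with the roles of $\A$ and $\g$ exchanged and $\n$ replaced by $\A_2$.

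I would then take $\g'$ to be the result of performing wave surgery on $\g$ along $\omega$. Since wave surgery realizes a handle slide, the triple $(\A,\n,\g')$ represents the same trisection as $(\A,\n,\g)$, and in particular the condition $\io(\A,\n) = 2$ is preserved.

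The key observation is that $\io(\A_2,\g)$ is entirely determined by the subgraph $G_\g(\A_2)$, and the surgery along $\omega$ acts only locally on $\g$ near its endpoints, where $\A_2$ meets the vertex structure dictated by the Type III graph. Hence the local weight computation from the proof of Lemma \ref{gwave} applies verbatim, now with the edges $a, b, c, d$ read off from $G_\g(\A_2)$ itself; subtracting the analogues of equations (\ref{eq1}) and (\ref{eq2}) gives
\[
\io(\A_2,\g) - \io(\A_2,\g') \;=\; w_{\A_2}(a) + 2w_{\A_2}(b).
\]
The cross-seam hypothesis is exactly the statement that $w_{\A_2}(a) \neq 0$, so $\io(\A_2,\g') < \io(\A_2,\g)$ and hence $\calc(\A,\n,\g') < \calc(\A,\n,\g)$ in the dictionary order, contradicting minimality.

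The main subtlety — and the reason this deserves to be a separate lemma rather than a one-line corollary of Lemma \ref{gwave} — is that the wave $\omega$ need not be disjoint from $\A_1$, so Proposition \ref{wavesurgery} does not apply directly to the Heegaard diagram $(\A,\g)$. But I do not need it to: only $\io(\A_2,\g)$ must be controlled, and this quantity depends solely on $G_\g(\A_2)$, whose Type III structure with a cross-seam supplies everything the argument requires.
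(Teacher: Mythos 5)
Your argument is correct and is exactly the paper's: the authors likewise perform wave surgery on $\g$ along the wave supplied by the Type III structure of $G_\g(\A_2)$ and appeal to Equations (\ref{eq1})--(\ref{eq2}) from the proof of Lemma \ref{gwave}, which depend only on the weights $w_{\A_2}$, so that the cross-seam hypothesis $w_{\A_2}(a)\neq 0$ forces a strict drop in the first coordinate of $\calc$. Your explicit remark that the wave need not avoid $\A_1$ (so only $\io(\A_2,\g)$, not $\io(\A,\g)$, is controlled, and Proposition \ref{wavesurgery} is not needed) is precisely the point the paper leaves implicit in its one-line proof.
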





By Theorem \ref{wave} and the assumption that $(\A,\g)$ is not trivial, we have that $(\A,\gamma)$ contains a wave $\omega$.  If $\omega$ is based in $\gamma$, then $G_{\g}(\A)$ is a Type III graph, so $(\A,\n,\g)$ is not minimal by Lemma \ref{gwave}.  Thus, we may assume that $\omega$ is based in $\A$, and $G_{\A}(\gamma)$ is a Type III graph.  Since $(\A,\n,\g)$ is minimal, we have that $\io(\A_2,\g) \leq \io(\A_1,\g)$; otherwise we could change the labeling of $\A = \{\A_1,\A_2\}$ to reduce complexity. Therefore, we can assume that $\pd \omega \subset \A_1$.

We parameterize arcs in $G_{\A}(\gamma)$ by assigning the seams connecting $\A_1^+$ to $\A_1^-$ the slopes $\frac{1}{0}$ and $-\frac{1}{2}$, and the seams connecting $\A_1^{\pm}$ to $\A_2^{\pm}$ the slope $\frac{0}{1}$.  Since $\io(\A, \n) = 2$, each intersection $\n_i \cap \Sigma_\A$ is a single seam connecting $\A_i^+$ and $\A_i^-$ for $i=1,2$, and these two seams have the same slope.  Let $\frac{m}{n}$ denote the slope of these seams, noting that $m$ must be odd and $n$ must be even, and we suppose that $m > 0$.  We also assume that $\io(\A_i,\n_j) = \delta_{ij}$.  In the case that $m = 1$, there is a curve $\n_3$ such that $\n_3 \cap \Sigma_\A$ consists of two cross-seams of slope $\frac{0}{1}$ and $\n_3 \cap \n = \emp$.  See Figure \ref{fig:Case-m=1}.  We call the curve $\n_3$ the \emph{0--replacement} for $\n$.

\begin{lemma}\label{mone}
If $m = 1$, then $(\A,\n,\g)$ is not minimal.
\begin{proof}
Let $\A' = \{\A_3,\A_2\}$ be the result of surgery on $\A$ along the wave $\omega$, let $\n_3$ be the 0--replacement for $\n$, and let $\n' = \{\n_1,\n_3\}$.  Then $\A$ and $\n$ are related to $\A'$ and $\n'$ by handle slides, and $\io(\A',\n') = 2$.  However, $\io(\A_3,\g) < \io(\A_1,\g)$ and thus $\calc(\A',\n',\g) < \calc(\A,\n,\g)$.  See Figure \ref{fig:Case-m=1}.
\end{proof}
\end{lemma}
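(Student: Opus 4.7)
The plan is to construct a handle-slide equivalent trisection diagram $(\A',\n',\g)$ with $\io(\A',\n') = 2$ and $\calc(\A',\n',\g) < \calc(\A,\n,\g)$, contradicting minimality.

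First, mimicking the template of Lemma \ref{gwave}, set $\A_3 = \Delta_\omega$ and $\A' = \{\A_2, \A_3\}$. Since $\omega$ is a wave based in $\A_1$, the cut system $\A'$ is obtained from $\A$ by wave surgery, hence by a handle slide, and Proposition \ref{wavesurgery} gives $\io(\A',\g) < \io(\A,\g)$. The curve $\A_2$ is unchanged, so this forces $\io(\A_3,\g) < \io(\A_1,\g)$.

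The obstacle specific to $m=1$ is that $(\A',\n)$ need not satisfy $\io(\A',\n) = 2$, which is required by the minimality constraint. This is precisely why the $0$--replacement is introduced: set $\n' = \{\n_1, \n_3\}$ where $\n_3$ is the $0$--replacement for $\n$. Because $\n_3 \cap \n = \emp$, the pair $\n'$ is a handle slide of $\n$ (replacing $\n_2$ with $\n_3$ across $\n_1$). I would then verify $\io(\A',\n') = 2$ by computing each of the four geometric intersection numbers $\io(\A_i',\n_j')$ via Lemma \ref{inter}, using the explicit slopes: the arcs of $\n_3$ have slope $\frac{0}{1}$, the seam of $\n_1$ has slope $\frac{1}{n}$ with $n$ even, and the arcs of $\A_3$ inherit their slopes from $\omega$ (namely $\frac{1}{0}$ or $-\frac{1}{2}$, depending on which of the two $\A_1^\pm$--to--$\A_1^\pm$ seam classes contains $\omega$).

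Finally, relabel $\A' = \{\A_1', \A_2'\}$ so that $\io(\A_2', \g) \leq \io(\A_1', \g)$. If $\io(\A_2, \g) \leq \io(\A_3, \g)$, then $\A_1' = \A_3$, the first coordinate of $\calc(\A',\n',\g)$ equals $\io(\A_2, \g)$, and the second coordinate drops from $\io(\A_1, \g)$ to $\io(\A_3, \g)$. Otherwise the first coordinate drops from $\io(\A_2, \g)$ to $\io(\A_3, \g)$. Either way $\calc(\A',\n',\g) < \calc(\A,\n,\g)$ in dictionary order, the desired contradiction. The step requiring the most care is the intersection count $\io(\A',\n') = 2$: this is where the $\frac{0}{1}$ slope of $\n_3$ earns its keep, keeping the arcs of $\n_3$ disjoint from $\n_1$ (up to shared boundary components) and giving exactly one essential intersection with each of $\A_2$ and $\A_3$, for the required total of two.
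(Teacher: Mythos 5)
Your proposal is correct and follows essentially the same route as the paper: replace $\A_1$ by $\A_3=\Delta_\omega$ via wave surgery, replace $\n_2$ by the $0$--replacement $\n_3$ to restore $\io(\A',\n')=2$, and observe that $\io(\A_3,\g)<\io(\A_1,\g)$ forces $\calc(\A',\n',\g)<\calc(\A,\n,\g)$. The paper simply verifies $\io(\A',\n')=2$ by pointing to Figure \ref{fig:Case-m=1}, whereas you sketch the slope computation, and your two-case treatment of the relabeling in the dictionary-order comparison is a slightly more careful version of the same final step.
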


\begin{figure}[h!]
\centering
\includegraphics[scale = 2.7]{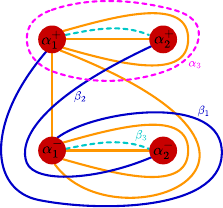}
\caption{The case in which $m=1$, so that arcs of $\beta_i$ have slope $\frac{1}{n}$ (shown with $n=2$).  Replacing $\A_1$ with $\A_3$ and $\n_2$ with the 0--replacement $\n_3$ lowers the complexity of the diagram.}
\label{fig:Case-m=1}
\end{figure}

We suppose for the remainder of this section that $m \geq 3$.  
Our plan of attack is as follows: We show that, in most cases, $(\n,\g)$ contains a wave based in $\g$ and that $(\A,\n,\g)$ is not minimal by Lemma \ref{nogwave}.  In the exceptional case, we will prove in Section \ref{sec:hard_case} that $(\n,\g)$ is not a Heegaard diagram for $S^3$, a contradiction.

Before we proceed, we require a technical lemma, which we state in greatest generality for use in later sections.

\begin{lemma}\label{missedcase}
Suppose that $(\delta,\eps)$ is a Heegaard diagram such that $\io(\delta_1,\eps_2) = 0$ and $G_{\delta}(\eps)$ is a square graph.  Then $(\delta,\eps)$ is not a Heegaard diagram for $S^3$.
\begin{proof}
We label the edges of the Type I graph $G_{\delta}(\eps)$ as in Figure \ref{fig:WhGraphs}, noting that $\eps_2$ contributes to only one edge, $d$, in $G_{\delta}(\eps)$.  Let $k_1 = w_{\eps_2}(d)$.  Since $G_{\delta}(\eps)$ is a square graph, we may orient $\delta$ and $\eps$ so that all intersection points have the same sign; thus, $(\delta,\eps)$ is a positive Heegaard diagram.  It follows that $\io(\delta_i,\eps_j) = \delta_i \cdot \eps_j$.  Since all edges of $G_{\delta}(\eps)$ have nonzero weights, $\eps_1$ intersects $\delta_2$ in $k_2 = w_{\eps}(a) >0$ points and $\delta_1$ in $k_3 = w_{\eps}(a) + w_{\eps}(c)  > 1$ points.  This implies that the intersection matrix $M(\delta,\eps)$ is
\[ M(\delta,\eps) = \begin{pmatrix}
k_3 & 0 \\
k_2 & k_1
\end{pmatrix},\]
where $|\text{det}(M(\delta,\eps))| = k_1k_3 > 1$.  Thus, $(\delta,\eps)$ is not a Heegaard diagram for $S^3$ by Lemma \ref{determ}.
\end{proof}
\end{lemma}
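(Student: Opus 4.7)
The plan is to derive a contradiction with Lemma \ref{determ} by computing the determinant of the intersection matrix. The hypothesis $\io(\delta_1,\eps_2) = 0$ immediately forces the $(1,2)$ entry of $M(\delta,\eps)$ to vanish up to sign, so $M(\delta,\eps)$ is triangular and its determinant is just the product of its two diagonal entries. If I can show that each diagonal entry is at least $1$ and that at least one of them is at least $2$, I am done.

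First, I would exploit the square-graph hypothesis to promote the geometric intersection numbers to algebraic ones. A square graph is a $4$-cycle with nonzero edge weights, so in particular $G_\delta(\eps)$ has no "back-tracking" pair of edges meeting a common vertex on both the $+$ and $-$ sides that would force cancellation under any orientation. Walking around the $4$-cycle and choosing orientations of $\delta_1,\delta_2,\eps_1,\eps_2$ compatibly with this cycle, every arc of $\eps \cap \Sigma_\delta$ enters and exits the vertices in a consistent rotational pattern, so $(\delta,\eps)$ is a positive Heegaard diagram. Hence $|\delta_i \cdot \eps_j| = \io(\delta_i, \eps_j)$ for every $i,j$.

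Next, I would read off the weights. Label the Type I graph as in Figure \ref{fig:WhGraphs}, with $b$ being the edge of weight zero. Because $\eps_2$ has no essential intersection with $\delta_1$, every arc of $\eps_2 \cap \Sigma_\delta$ is a seam between $\delta_2^+$ and $\delta_2^-$, so $\eps_2$ contributes only to the edge $d$, giving $\io(\delta_2,\eps_2) = w_\eps(d) =: k_1 \geq 1$. The complementary weights $w_\eps(a)$ and $w_\eps(c)$ are then entirely contributions of $\eps_1$. Since $a$ connects $\delta_1^\pm$ to $\delta_2^\pm$ and $c$ is a seam between $\delta_1^+$ and $\delta_1^-$, we read off $\io(\delta_1,\eps_1) = w_\eps(a) + w_\eps(c) \geq 2$ (because both weights are nonzero by the square-graph assumption).

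Combining these, the intersection matrix has the triangular form
\[ M(\delta,\eps) = \begin{pmatrix} w_\eps(a) + w_\eps(c) & 0 \\ * & w_\eps(d) \end{pmatrix}, \]
so $|\det M(\delta,\eps)| = (w_\eps(a) + w_\eps(c)) \cdot w_\eps(d) \geq 2 > 1$, contradicting Lemma \ref{determ}. The main subtlety I would expect to have to check carefully is the positivity step: verifying from the structure of a $4$-cycle Whitehead graph that orientations can indeed be chosen so that all intersections are coherently signed. Once that is secured, the rest is a one-line matrix computation.
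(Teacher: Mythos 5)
Your proposal is correct and follows essentially the same route as the paper: use the square-graph structure to conclude the diagram is positive, read off the triangular intersection matrix from the edge weights, and contradict Lemma \ref{determ} via $|\det M(\delta,\eps)| = (w_\eps(a)+w_\eps(c))\,w_\eps(d) > 1$. The paper likewise asserts the positivity step with no more detail than you give, so there is nothing to add.
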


Now, we return to $(\A,\n,\g)$.  If $\n^*$ is a seam in $\Sigma_{\A}(\n)$ connecting $\A_i^+$ to $\A_i^-$, then the slope of $\n^*$ is $\frac{m}{n}$.  A seam $\g^*$ in $\Sigma_\A(\g)$ connecting $\A_1^+$ to $\A_1-$ has slope $\frac{1}{0}$ or $-\frac{1}{2}$, while a cross-seam has slope $\frac{0}{1}$.  It follows from Lemma \ref{inter} that $\n^*$ intersects $\g^*$ essentially.  The same is true if $\g^*$ is a wave arc.  Recall that we have assumed that $(\n,\g)$ is not trivial, so by Theorem \ref{wave}, $(\n,\g)$ contains a wave.

\begin{lemma}\label{nogwave}
If $(\n,\g)$ contains a wave based in $\g$, then  $(\A,\n,\g)$ is not minimal.
\begin{proof}
If $(\n,\g)$ contains a wave based in $\g$, then $G_{\g}(\n)$ is a Type III graph, and thus each vertex $\g_2^{\pm}$ is incident to only one edge $a_{\pm}$ in $G_{\g}(\n)$.  Isotope $\A$ so that any inessential intersections of $\A$ and $\n$ in $\Sigma_\g$ based in $\g_2^{\pm}$ (there are at most two of them) are based in $\g_2^-$.  We note that $\g_2^+$ does not contain two consecutive intersections with $\A$.  Otherwise, the subarc connecting such intersections corresponds to an arc $\g^*$ in $\Sigma_\A(\g)$ which avoids $\n$, contradicting that every such arc $\g^*$ intersects an arc of $\n$ essentially in $\Sigma_\A$.

It follows that in $\Sigma_{\g}(\A,\n)$, all $\A$-arcs meeting $\g_2^+$ with possibly one exception are situated between $\n$-arcs in the edge $a_+$.  In other words, in the reduced graph $G_{\g}(\A)$, the vertex $\g_2^+$ is incident to at most two edges, one of which has the same slope as $a_+$ in $G_{\g}(\n)$, and so we will call this edge $a_+$, and the other of which we will call $e$.  See Figure \ref{fig:LocalC2+}.  Note that $w_{\A}(e) = 1$.  We split the remainder of the argument into four cases, based on possible values of $w_{\A_2}(a)$ and $w_{\A_2}(e)$. \\

\begin{figure}[h!]
\centering
\includegraphics[scale = 1.5]{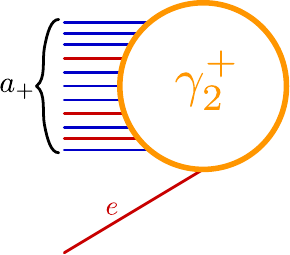}
\caption{A local picture of $\Sigma_\g$ near $\g_2^+$.  All arcs of $\Sigma_\g(\n)$ and all but one arc of $\Sigma_\g(\A)$ meeting $\g_2^+$ are in the edge $a_+$, and the one exceptional arc of $\A$ is in the edge $e$.}
\label{fig:LocalC2+}
\end{figure}

\textbf{Case 1:} $w_{\A_2}(a) \neq 0$ and $w_{\A_2}(e) = 0$.  In this case, the vertex $\g_2^+$ has valence one in $G_{\g}(\A_2)$ and $G_{\g}(\A_2)$ contains the cross-seam $a_+$.  By Lemma \ref{g2wave}, the diagram $(\A,\n,\g)$ is not minimal. \\

\textbf{Case 2:} $w_{\A_2}(a) = 0$ and $w_{\A_2}(e) \neq 0$.  If $e$ is a cross-seam, then by the same argument as in Case 1, $(\A,\n,\g)$ is not minimal.  Otherwise, $e$ connects $\g_2^+$ to $\g_2^-$.  By the argument above, $\g_2^{\pm}$ is incident to exactly two edges $a^{\pm}$ and $e$ in $G_{\g}(\A)$.  If $\g_1^{\pm}$ has valence one in $G_{\g}(\A)$, then $G_{\g}(\A)$ is a c-graph and there is a wave for $(\A,\g)$ based in $\g$ by Lemma \ref{12gon} and thus $(\A,\n,\g)$ is not minimal by Lemma \ref{gwave}.  If $\g_1^+$ has valence greater than one, there is another edge $c$ incident to $\g_1^+$ in $G_{\g}(\A)$, and $c$ must connect $\g_1^+$ to $\g_1^-$.  It follows that $G_{\g}(\A)$ is a Type I graph with only one cross-seam; equivalently, $G_{\g}(\A)$ is a square graph, and the assumption $w_{\A_2}(a) = 0$ implies that $\io(\A_2,\g_1) = 0$.  However, in this case Lemma \ref{missedcase} implies that $(\A,\g)$ is not a Heegaard diagram for $S^3$, a contradiction. \\

\textbf{Case 3:} $w_{\A_2}(a) \neq 0$ and $w_{\A_2}(e) \neq 0$.  Here, $w_{\A_2}(e) = 1$ and thus $w_{\A_1}(e) = 0$.  Let $\g_3 = \Delta_a$ and $\g' = \{\g_2,\g_3\}$, so that $\g$ is related to $\g'$ by a handle slide.  There are two subcases to consider; refer to Figure \ref{fig:a2graphs}.  Suppose first that $e$ is a cross-seam, so that there is another cross-seam $e'$ which meets $\g_2^-$ by the symmetry of $G_{\g}(\A)$.  Since the valence of $\g_2^+$ in $G_{\g}(\A)$ is two, we have $G_{\g}(\A)$ is a Type II graph.  Let $c$ and $d$ denote the two distinct edges connecting $\g_1^+$ to $\g_1^-$.  We compute
\begin{eqnarray*}
\io(\A_2,\gamma_1) &=& w_{\A_2}(a) + w_{\A_2}(c) + w_{\A_2}(d) + 1\\
\io(\A_2,\gamma_3) &=& w_{\A_2}(c) + w_{\A_2}(d) + 2.
\end{eqnarray*}
It follows from $w_{\A_2}(a) \neq 0$ that $\io(\A_2,\g') \leq \io(\A_2,\g)$.  If this inequality is strict, then $(\A,\n,\g)$ is not minimal.  Otherwise, we assume $\io(\A_2,\g') = \io(\A_2,\g)$.  Since $w_{\A_1}(e) = 0$, $G_{\g}(\A_1)$ is a Type III graph.  Thus, if $w_{\A_1}(a) \neq 0$, then $\io(\A_1,\g') < \io(\A_1,\g)$ so that $\calc(\A,\n,\g') < \calc(\A,\n,\g)$.  Otherwise, $w_{\A_1}(a) = 0$ and thus $\io(\A_1,\g') = \io(\A_1,\g)$.  Now, since $\g'$ is the result of surgery on a wave for $(\n,\g)$, we have $\io(\n,\g') < \io(\n,\g)$ in addition to $\io(\A_2,\g') = \io(\A_2,\g)$ and $\io(\A_1,\g') = \io(\A_1,\g)$; hence $\calc(\A,\n,\g') < \calc(\A,\n,\g)$.

\begin{figure}[h!]
\centering
\includegraphics[scale = .5]{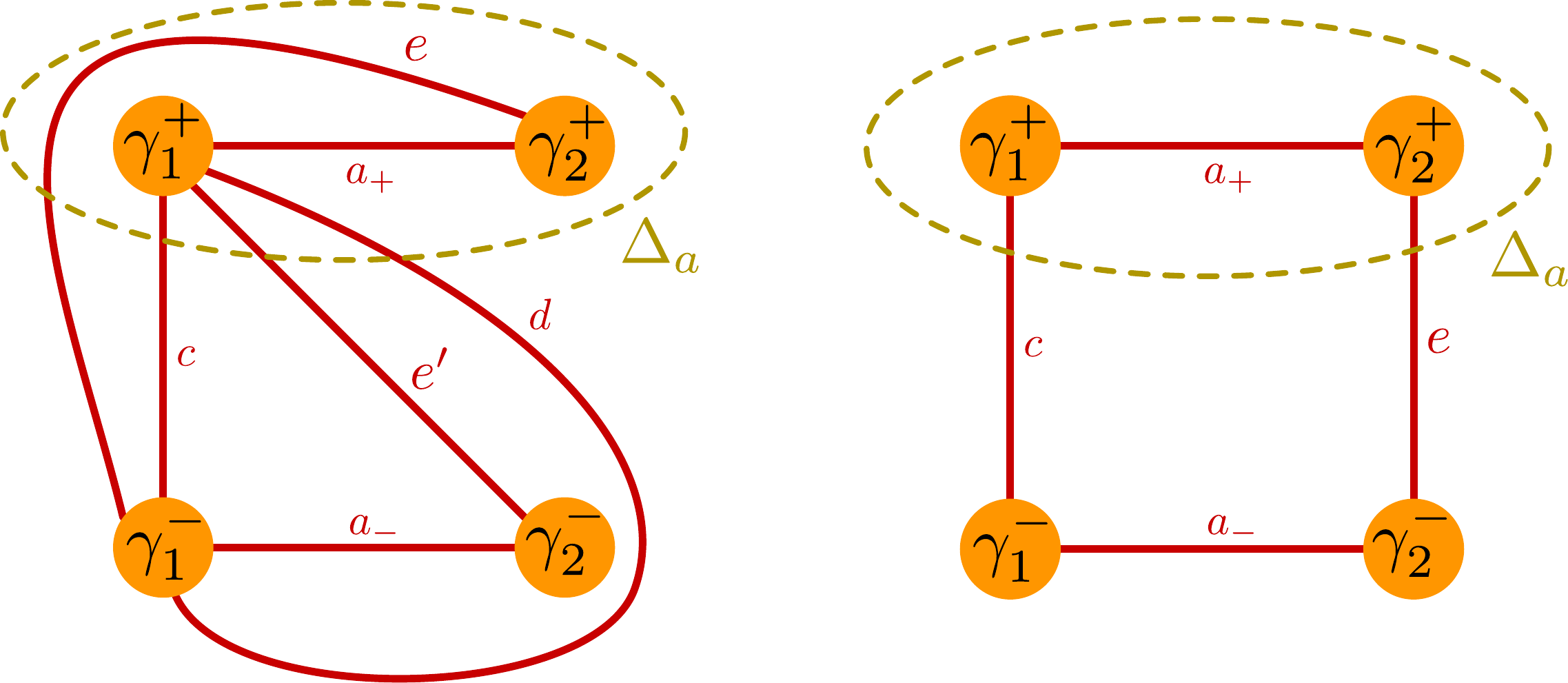}
\put(-260,-20){\large (a)}
\put(-87,-20){\large (b)}
\caption{The two subcases of Case 3 considered in the proof of Lemma \ref{nogwave}: (a) $G_\g(\A_2)$ is a Type II graph, and (b) $G_\g(\A_2)$ is a square graph.  In each case, replacing $\g_1$ by $\g_3=\Delta_a$ reduces complexity.}
\label{fig:a2graphs}
\end{figure}

In the second case, we suppose that $e$ connects $\g_2^+$ to $\g_2^-$.  If the valence of $\g_1^{\pm}$ in $G_{\g}(\A_2)$ is one, then $(\A,\n,\g)$ is not minimal by Lemma \ref{g2wave}.  Otherwise, the valence of $\g_1^{\pm}$ is greater than one in $G_{\g}(\A)$, and since the valence of $\g_2^+$ is two, $G_{\g}(\A)$ is a Type I graph with one cross-seam; equivalently, $G_{\g}(\A)$ is  a square graph.  Here we let $c$ denote the single edge connecting $\g_1^+$ and $\g_1^-$.  As above, we compute
\begin{eqnarray*}
\io(\A_2,\gamma_1) &=& w_{\A_2}(a) + w_{\A_2}(c) \\
\io(\A_2,\gamma_3) &=& w_{\A_2}(c) + 1.
\end{eqnarray*}
If $w_{\A_2}(a) > 1$, then $\io(\A_2,\g') < \io(\_2A,\g)$ and $(\A,\n,\g)$ is not minimal.  On the other hand, if $w_{\A_2}(a) = 1$, then $\io(\A_2,\g) = \io(\A_2,\g')$, while $G_{\g}(\A_1)$ is a Type III graph, which implies $\calc(\A,\n,\g') < \calc(\A,\n,\g)$ as above. \\

\textbf{Case 4}: $w_{\A_2}(a) = w_{\A_2}(e) = 0$.  If $w_{\A}(a) = w_{\A}(e) = 0$, then Lemma \ref{crossseam} implies that $(\A,\g)$ is trivial.  If one of $w_{\A}(a)$ or $w_{\A}(e)$ is zero, then $G_{\g}(\A)$ is a Type III graph and $(\A,\n,\g)$ is not minimal by Lemma \ref{gwave}.  Otherwise, $w_{\A_1}(e) = 1$ and $w_{\A_1}(a) \neq 0$.  Suppose first that $e$ is a cross-seam, so that by the symmetry of $G_{\g}(\A_1)$, there is another weight one cross-seam $e'$ which meets $\g_2^-$.  Since the valence of $\g_2^{\pm}$ is two and $G_{\g}(\A)$ contains two distinct cross-seams, the graph $G_{\g}(\A)$ is a Type II graph (as in Figure \ref{fig:a2graphs}(a)).  Let $c$ and $d$ denote the edges connecting $\g_1^+$ to $\g_1^-$, let $\g_3 = \Delta_a$, and let $\g' = \{\g_2,\g_3\}$ so that $\g'$ is related to $\g$ by a handle slide.  Then $\io(\A_2,\g_3) = \io(\A_2,\g_1) = w_{\A_2}(c) + w_{\A_2}(d)$, and, using $w_{\A_1}(e) = w_{\A_1}(e') = 1$, we compute
\begin{eqnarray*}
\io(\A_1,\g_1) &=& w_{\A_1}(a) + w_{\A_1}(c) + w_{\A_1}(d) +1 \\
\io(\A_1,\g_3) &=& w_{\A_1}(c) + w_{\A_1}(d) + 2.
\end{eqnarray*}
Thus, if $w_{\A_1}(a) > 1$, then $\io(\A_1,\g') < \io(\A_1,\g)$ so that $\calc(\A,\n,\g') < \calc(\A,\n,\g)$.  Otherwise, $w_{\A_1}(a) = 1$ and thus $\io(\A_1,\g') = \io(\A_1,\g)$.  As above, since $\g'$ is the result of surgery on a wave for $(\n,\g)$, we have $\io(\n,\g') < \io(\n,\g)$; hence $\calc(\A,\n,\g') < \calc(\A,\n,\g)$.


In the second subcase, suppose that $e$ is a seam connecting $\g_2^+$ to $\g_2^-$.  Since an $\A_2$--arc contributes an edge, call it $c$, connecting $\g_1^+$ to $\g_1^-$ in $G_{\g}(\A)$, it follows that $G_{\g}(\A)$ is a Type I graph with only one cross-seam (see Figure \ref{fig:a2graphs}(b)); that is, $G_{\g}(\A)$ is a square graph.  Moreover, $w_{\A_2}(a) = w_{\A_2}(e) = 0$ implies $\io(\A_2,\g_2) = 0$, so by Lemma \ref{missedcase}, we have $(\A,\g)$ is not a Heegaard diagram for $S^3$, a contradiction.
\end{proof}
\end{lemma}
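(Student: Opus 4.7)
The plan is to leverage the wave based in $\g$ for $(\n,\g)$ to constrain the reduced Whitehead graph $G_\g(\A)$, and then invoke wave-surgery on $\g$ via the dual curve $\Delta_a$ together with Lemmas \ref{g2wave} and \ref{missedcase} to contradict minimality of $\calc(\A,\n,\g)$ in the dictionary order.

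First I would translate the wave hypothesis into structural information about $\Sigma_\g$: since $(\n,\g)$ has a wave based in $\g$, the graph $G_\g(\n)$ is Type III, so each of the vertices $\g_2^\pm$ is incident in $G_\g(\n)$ only to the edge $a_\pm$. After isotoping $\A$ so that any inessential intersections of $\A$ and $\n$ in $\Sigma_\g$ based at $\g_2^\pm$ lie at $\g_2^-$, the key observation is that $\g_2^+$ cannot carry two consecutive points of $\A \cap \g$: such a pair would yield an arc $\g^* \subset \Sigma_\A(\g)$ disjoint from $\n$, but by Lemma \ref{inter} applied with $m\geq 3$, every seam of $\Sigma_\A(\g)$ must meet every seam of $\Sigma_\A(\n)$ essentially. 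Consequently, at the vertex $\g_2^+$ of $G_\g(\A)$, all but at most one of the $\A$-arcs are absorbed into a single edge parallel to $a_+$, leaving at most one extra edge $e$ of weight one; in particular $\g_2^+$ has valence at most two in $G_\g(\A)$.

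Next, I would run a case analysis on $w_{\A_2}(a)$ and $w_{\A_2}(e)$. Set $\g_3 := \Delta_a$ and $\g' := \{\g_2,\g_3\}$, so $(\A,\n,\g')$ is related to $(\A,\n,\g)$ by a handle slide on $\g$; moreover $\io(\n,\g') < \io(\n,\g)$ because this slide is wave-surgery for $(\n,\g)$. When $w_{\A_2}(a)\neq 0$ and $w_{\A_2}(e)=0$, the vertex $\g_2^+$ has valence one in $G_\g(\A_2)$ along the cross-seam $a_+$, so Lemma \ref{g2wave} finishes. When $w_{\A_2}(a)=0$ and $w_{\A_2}(e)\neq 0$, the same lemma applies if $e$ is itself a cross-seam; otherwise the local picture forces $G_\g(\A)$ to be a square graph with $\io(\A_2,\g_1)=0$, and Lemma \ref{missedcase} contradicts $(\A,\g)$ being an $S^3$ Heegaard diagram. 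The remaining two cases (both weights nonzero, or both zero) are handled by comparing $\io(\A_2,\g_1)$ and $\io(\A_2,\g_3)$ edge-by-edge using the Type~II / square-graph dichotomy on $G_\g(\A)$, showing $\io(\A_2,\g') \leq \io(\A_2,\g)$, and falling back on $\io(\A_1,\g') < \io(\A_1,\g)$ or on the automatic decrease $\io(\n,\g') < \io(\n,\g)$ when equality holds at the first coordinate of $\calc$.

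I expect the main obstacle to be the borderline subcases in which $w_{\A_2}(a)=1$, where the surgery produces $\io(\A_2,\g') = \io(\A_2,\g)$ exactly and the first coordinate of $\calc$ is unchanged. There the plan requires demonstrating that $G_\g(\A_1)$ is Type III so that $\io(\A_1,\g)$ strictly drops after surgery, or, failing that, appealing to the wave-surgery bonus $\io(\n,\g') < \io(\n,\g)$ to win at the third coordinate of the dictionary order. The parallel bookkeeping for the two possibilities ``$e$ is a seam'' versus ``$e$ is a cross-seam'' must be carried out uniformly across all four cases, and the cleanest invariant to flag for applying Lemma \ref{missedcase} in each square-graph subcase is the vanishing of one $\io(\A_i,\g_j)$, which follows whenever both relevant local weights vanish.
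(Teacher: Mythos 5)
Your proposal tracks the paper's proof essentially step for step: the same structural observation that $\g_2^+$ carries no two consecutive intersections with $\A$ (via Lemma \ref{inter} and $m \geq 3$), hence has valence at most two in $G_\g(\A)$ with one exceptional weight-one edge $e$; the same four-way split on $w_{\A_2}(a)$ and $w_{\A_2}(e)$; the same surgery curve $\g_3 = \Delta_a$ with $\g' = \{\g_2,\g_3\}$; and the same appeals to Lemmas \ref{g2wave} and \ref{missedcase} and to the automatic decrease $\io(\n,\g') < \io(\n,\g)$ in the borderline subcases.

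The one place your plan as written would fail is the claim in Case 2 that ``the local picture forces $G_\g(\A)$ to be a square graph.'' When $e$ joins $\g_2^+$ to $\g_2^-$ and each $\g_1^{\pm}$ has valence one, $G_\g(\A)$ is a c-graph rather than a square graph, and Lemma \ref{missedcase} does not apply; the paper closes this branch with Lemma \ref{12gon} (a c-graph yields a wave for $(\A,\g)$ based in $\g$) followed by Lemma \ref{gwave}. The same caveat applies to the ``Type II / square-graph dichotomy'' you invoke for Cases 3 and 4: the paper's analysis there also contains terminal branches handled by Lemma \ref{crossseam} (when $w_{\A}(a) = w_{\A}(e) = 0$, forcing $(\A,\g)$ trivial) and by Lemma \ref{gwave} (when exactly one of these vanishes, forcing $G_\g(\A)$ to be Type III), and in Case 4 the intersection comparison must be run at $\A_1$ rather than $\A_2$ since $\io(\A_2,\g_3) = \io(\A_2,\g_1)$ automatically. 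These are short fixes using lemmas you already cite, but without them the case analysis is not exhaustive.
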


In the two lemmas that follow, we return our attention to $G_{\A}(\n,\g)$.  These two lemmas, together with our previous work in this section, prove Proposition \ref{claim1} in all but one exceptional case.  This case is significantly more complicated and requires the arguments in Sections \ref{sec:digress} and \ref{sec:hard_case} to settle.

\begin{lemma}\label{bnotzero}
If $w_{\g}(b) \neq 0$ in $G_{\A}(\g)$, then $(\n,\g)$ does not contain a wave based in $\n$.
\begin{proof}
Observe that there is an arc $\n^*$ of $\n_2$ which connects $\A_2^+$ to a point $p$ contained in an arc $\gamma_0$ in the edge $b_+$ of $G_\A(\g)$. See Figure \ref{fig:wb=0}(a). Since $m \geq 3$, we have that $\n_1$ intersects arcs in the edge $a$ essentially, which implies that $\n_1$ meets $\g_0$ in a nonempty set of pairs of oppositely oriented points.  It follows that $\g_0$ contains a wave-busting arc $\g^*$ containing the point $p$; hence $G_{\n}(\g)$ does not contain a wave by Lemma \ref{sat}.  
\end{proof}
\end{lemma}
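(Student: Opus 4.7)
The plan is to construct a \emph{wave-busting arc} in $\g$ for the Heegaard diagram $(\n,\g)$, then invoke Lemma \ref{sat} (with the roles of $\A,\n$ played by $\n,\g$) to rule out a wave based in $\n$.

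Since $w_\g(b) \neq 0$ in $G_\A(\g)$, I fix an arc $\g_0$ of $\g \cap \Sigma_\A$ in the edge $b_+$; by the parameterization fixed above, it is a seam from $\A_1^+$ to $\A_1^-$ of slope $-\frac{1}{2}$. Lemma \ref{inter} applied with $k = 0$ to $\g_0$ and the seam $\n_2 \cap \Sigma_\A$ of slope $\frac{m}{n}$ (endpoints $\A_2^\pm$) yields $\frac{1}{2}|2m + n| = m + \frac{n}{2} \geq 3$ essential interior intersections, using $m \geq 3$. Let $p$ be the one closest to $\A_2^+$ along $\n_2$, and let $\n^*$ be the subarc of $\n_2$ from $\A_2^+$ to $p$, which then meets $\g_0$ only at $p$. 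Similarly, Lemma \ref{inter} with $k = 2$ applied to $\g_0$ and $\n_1 \cap \Sigma_\A$ (slope $\frac{m}{n}$, endpoints $\A_1^\pm$) produces $m + \frac{n}{2} - 1 \geq 2$ essential intersections.

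The key step is then to argue that consecutive interior intersections of $\n_1$ with $\g_0$ along $\g_0$ carry opposite signs: two consecutive same-sign crossings, once the endpoints of $\n_1$ and $\g_0$ are joined by small arcs on $\A_1$ in $\Sigma$, would bound a bigon in $\Sigma_\A$ that could be isotoped away, contradicting essentiality of the intersection count just computed. Granting this, I pick consecutive intersections $q_1,q_2$ of $\n_1$ along $\g_0$ bracketing $p$, chosen close enough to $p$ along $\g_0$ that no other point of $\g_0 \cap (\n_1 \cup \n_2)$ lies on the subarc $\g^* \subset \g_0$ from $q_1$ to $q_2$. Then $\g^*$ meets $\n_1$ only at its two oppositely oriented endpoints $q_1,q_2$ and meets $\n_2$ only once, at the interior point $p$, so $\g^*$ is a wave-busting arc for $(\n,\g)$; Lemma \ref{sat} then yields the claim.

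The main obstacle is justifying the sign-alternation step for consecutive crossings of $\n_1$ with $\g_0$. Once this is in place, the wave-busting arc $\g^*$ and the final application of Lemma \ref{sat} are immediate, and the intersection counts are direct applications of Lemma \ref{inter}.
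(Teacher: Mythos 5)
Your overall strategy is the same as the paper's --- locate a wave-busting arc $\g^* \subset \g_0$ containing the $\n_2$--point $p$ and apply Lemma \ref{sat} --- but the execution has genuine gaps. First, you misidentify the edge $b_+$: its arcs are cross-seams of slope $\frac{0}{1}$, not seams of slope $-\frac{1}{2}$ joining $\A_1^+$ to $\A_1^-$. Any edge joining $\A_i^+$ to $\A_i^-$ is invariant under the hyperelliptic involution $J$, whereas $b_+$ and $b_-=J(b_+)$ are distinct edges, so $b_\pm$ (like $a_\pm$) must be cross-seams; the slope $\frac{1}{0}$ and $-\frac{1}{2}$ seams constitute the edges $c$ and $d$ (this is also implicit in the proof of Lemma \ref{esssame}, where $\psi$, which exchanges the slopes $\frac{1}{0}$ and $-\frac{1}{2}$, exchanges $c$ and $d$). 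Consequently your applications of Lemma \ref{inter} are computed for the wrong arcs. Even on their own terms the counts fail: you silently assume $2m+n>0$, but $n$ may be negative (the slopes relevant later are negative, e.g.\ $-\frac{n\pm 1}{n}$); for slope $-\frac{3}{4}$ one gets $\frac{1}{2}|2m+n|=1$ and $\frac{1}{2}\left(|2m+n|-2\right)=0$, so your $\g_0$ would meet $\n_1$ in no essential points at all and the construction collapses.

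The more fundamental problem is your ``key step.'' Two consecutive same-sign crossings of $\n_1$ with $\g_0$ do \emph{not} bound a bigon: the two corners of a bigon are oppositely signed, so minimal position is entirely compatible with all intersections being coherently oriented --- this is precisely the ``positive Heegaard diagram'' phenomenon that the paper exploits throughout. Hence your justification of sign-alternation is invalid, and with it the existence of the oppositely oriented endpoints $q_1,q_2$. In the paper the oppositely oriented pair is derived not from a bigon argument but from the cross-seam structure of $\g_0$ (an arc terminating on $\A_2^{\pm}$, a vertex $\n_1$ never meets) together with the fact that, since $m\geq 3$, the arc $\n_1$ meets the cross-seams essentially. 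Finally, even granting oppositely oriented consecutive $\n_1$--points, you cannot ``choose them close enough to $p$'': their positions along $\g_0$ are determined by the diagram, and you still owe an argument that exactly one point of $\n_2\cap\g_0$ lies between them, as the definition of a wave-busting arc requires.
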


\begin{figure}[h!]
\centering
\includegraphics[scale = 1.0]{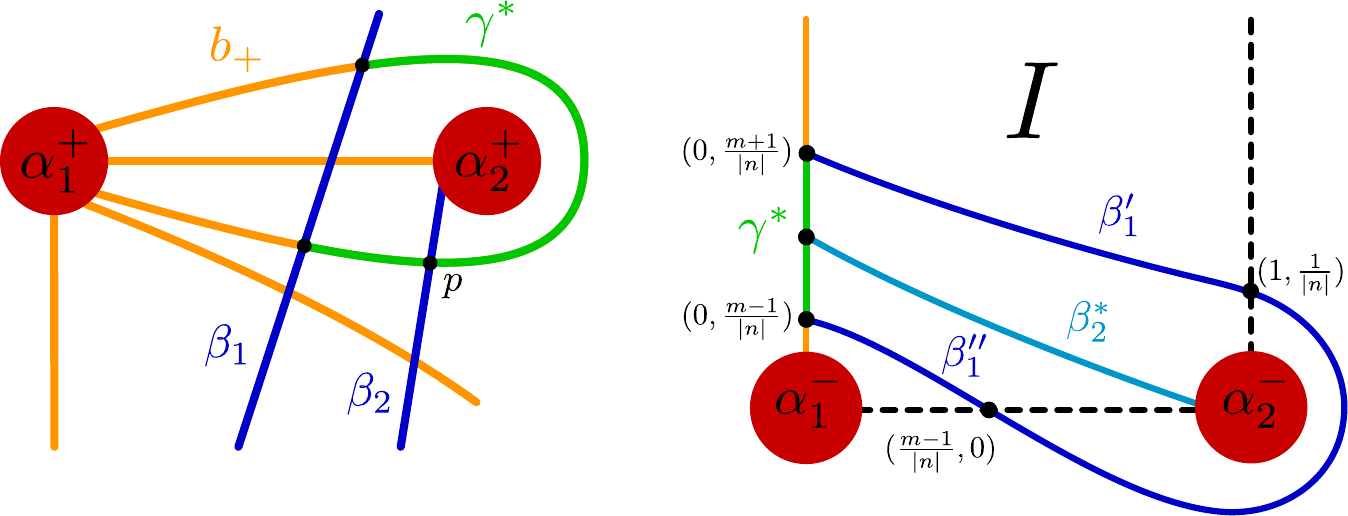}
\put(-320,-15){\large (a)}
\put(-100,-15){\large (b)}
\caption{Wave-busting arcs in $\g^*$ in the case (a) that $w_{\g}(b) \neq 0$ and (b) that $w_\g(b)=0$, but the slope $\frac{m}{n}\not=-\frac{n\pm1}{n}$.}
\label{fig:wb=0}
\end{figure}

We now assume that $w_\g(b)=0$.  Recall that the homeomorphism $\psi$ described in Section \ref{sec:prelims} maps $G_{\A}(\g)$ onto itself.

\begin{lemma}\label{nothard}
If $\frac{m}{n} \neq -\frac{n \pm 1}{n}$, then $(\n,\g)$ does not contain a wave based in $\n$ or $(\A,\n,\g)$ is not minimal.
\begin{proof}
First, we note that $w_{\g}(a)$, $w_{\g}(c)$, and $w_{\g}(d)$ are not zero; otherwise, $G_{\g}(\A)$ is a c-graph and contains a wave by Lemma \ref{12gon}, so $(\A,\n,\g)$ is not minimal by Lemma \ref{gwave}.  To prove the lemma, we will view $\Sigma_{\A}$ as two unit squares $I$ and $I'$ glued together along their boundaries with a neighborhood of their vertices removed.  We describe points in $I$ with Cartesian coordinates, where $\A_1^- = (0,0)$.  First, suppose that $\frac{m}{n} < 0$ and $1 < m < |n| - 1$.  Let $\n_2^*$ denote the arc of $\n_2 \cap I$ which meets the vertex of $I$ at the coordinates $(1,0)$.  Since the slope of $\n_2^*$ is $-\frac{m}{|n|}$, the coordinates of the other endpoint of $\n_2^*$ are $(0,\frac{m}{|n|})$.  In $I$, there are two arcs of $\n_1$, call them $\n_1'$ and $\n_1''$, which are adjacent to $\n_2^*$.  The endpoints of $\n_1'$ and $\n_1''$ are given by
\begin{eqnarray*}
\text{endpoints of } \n_1' &=& \left(0,\frac{m+1}{|n|}\right) \text{ and } \left(1, \frac{1}{|n|}\right) \\
\text{endpoints of } \n_1'' &=& \left(0,\frac{m-1}{|n|} \right) \text{ and } \left(\frac{m-1}{m}, 0\right)
\end{eqnarray*}
See Figure \ref{fig:wb=0}(b).  The assumption that $1 < m < |n|-1$ ensures that none of these four endpoints is a vertex of $I$.  In addition, we note that in $I'$, the endpoints $(1,\frac{1}{|n|})$ and $(\frac{m-1}{m},0)$ are connected by an arc in $\n_1$.  Thus, an arc in $\Sigma_\A(\g)$ which has slope $\frac{1}{0}$ contains a subarc $\g^*$ connecting oppositely oriented points of intersection with $\n_1$, $(0,\frac{m+1}{|n|})$ and $(0,\frac{m-1}{|n|})$.  Further, $\g^*$ meets $\n_2$ precisely once in the point $(0,\frac{m}{|n|})$.  Therefore, $\g^*$ is a wave-busting arc and $(\n,\g)$ does not contain a wave by Lemma \ref{sat}.

For the other cases, we will show that by applying the homeomorphism $\psi$ to $\Sigma_{\A}$, we can reduce the argument to the one above.  Since $\psi$ preserves $G_{\A}(\g)$, we need only determine the action of $\psi$ on the slope of $\frac{m}{n}$ of arcs in $\n$.  Suppose that either $\frac{m}{n} > 0$ or $\frac{m}{n} < 0$ and $m > |n| + 1$.  We have
 \[ \psi \begin{pmatrix} m \\ n \end{pmatrix} = \begin{pmatrix} -m \\ 2m + n \end{pmatrix}.\]
If $\frac{m}{n} > 0$, then certainly $-\frac{m}{2m+n} < 0$ and $m < 2m+n -1$.  Hence, the above argument shows that there exists a wave-busting arc $\g^* \subset \g$, so $(\n,\g)$ does not contain a wave based in $\g$.  Now, suppose that $n < 0$ and $m > |n| + 1$ and let $k = 2m + n = 2m -|n|$, so that $k > m +1$ and thus $m < k -1$.  Once again, we have that $-\frac{m}{2m+n} = -\frac{m}{k}$ is of the form handled by the above argument, and we conclude that $(\n,\g)$ does not contain a wave based in $\n$.
\end{proof}
\end{lemma}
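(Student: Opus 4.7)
The plan is to exhibit a wave-busting arc contained in $\g$ and invoke Lemma~\ref{sat} to rule out any wave of $(\n,\g)$ based in $\n$. First I dispense with the degenerate shape of $G_\A(\g)$: if any of $w_\g(a), w_\g(c), w_\g(d)$ vanishes in addition to the standing hypothesis $w_\g(b)=0$, then $G_\A(\g)$ is a c-graph, so Lemma~\ref{12gon} produces a wave for $(\A,\g)$ based in $\g$, and Lemma~\ref{gwave} concludes that $(\A,\n,\g)$ is not minimal---already one of the alternatives in the lemma. Hence I may assume $w_\g(a), w_\g(c), w_\g(d)>0$, so that $\g$ carries arcs of each of the three slopes $\tfrac{0}{1}$, $\tfrac{1}{0}$, and $-\tfrac{1}{2}$.

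The geometric core of the argument is the \emph{base case} $\tfrac{m}{n}<0$ with $1<m<|n|-1$. Represent $\Sigma_\A$ as two unit squares $I,I'$ glued along their boundaries (with vertex-neighborhoods removed), placing $\A_1^-$ at $(0,0)\in I$. The arc $\n_2^*\subset \n_2\cap I$ incident to $(1,0)$ has slope $-m/|n|$, hence terminates at $(0,m/|n|)$. Using the marching pattern of $\n_1$ along the boundary of $I$ (forced by $\io(\A_i,\n_j)=\delta_{ij}$ and the constant slope), the two $\n_1$-arcs $\n_1',\n_1''$ flanking $\n_2^*$ have their left-edge endpoints at heights $(m\pm1)/|n|$, and the inequality $1<m<|n|-1$ guarantees none of these is a corner of $I$. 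A vertical $\g$-seam $\g^*$ of slope $\tfrac{1}{0}$, available by Step~1, can then be drawn between $\n_1'$ and $\n_1''$ so that it meets $\n_1$ only at two interior endpoints of $\n_1',\n_1''$ and meets $\n_2$ exactly once, at the interior point $(0,m/|n|)$. A sign count, exploiting that $m$ is odd and $n$ is even, shows that the two $\n_1$-endpoints are oppositely oriented, so $\g^*$ is a wave-busting arc in the sense of Lemma~\ref{sat}.

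To finish, I use the homeomorphism $\psi$ from Section~\ref{sec:prelims} to reduce every remaining slope to the base case. Since $\psi$ preserves $G_\A(\g)$ (fixing $a_\pm, b_\pm$ and swapping $c\leftrightarrow d$), applying $\psi$ to $(\A,\n,\g)$ does not change the conclusion. The matrix $M_\psi$ acts on slopes by $\tfrac{m}{n}\mapsto\tfrac{-m}{2m+n}$. A direct check shows that every positive slope, and every negative slope with $m>|n|+1$, is sent into the base-case range; the only slopes whose images still lie outside this range are precisely $-\tfrac{n\pm1}{n}$, matching the exceptional hypothesis of the lemma. Combining the wave-busting arc from Step~2 with Lemma~\ref{sat} then yields the conclusion that $(\n,\g)$ has no wave based in $\n$.

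The main obstacle lies in the bookkeeping of Step~2: precisely identifying the $\n_1$-arcs adjacent to $\n_2^*$ and verifying the endpoint formulas $(0,(m\pm1)/|n|)$ require careful use of $\io(\A,\n)=2$ together with the parities of $m$ and $n$, while the oppositely-oriented sign count requires tracking how a coherent orientation of $\n_1$ alternates as it winds around $\Sigma_\A$. The remainder of the proof is essentially a matrix computation in $PGL_2(\Z)$.
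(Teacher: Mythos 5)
Your proposal is correct and follows essentially the same route as the paper: rule out the c-graph degeneration via Lemmas \ref{12gon} and \ref{gwave}, locate the wave-busting arc $\g^*$ of slope $\frac{1}{0}$ between the two $\n_1$-arcs flanking $\n_2^*$ in the two-square model for the base case $1<m<|n|-1$, and push the remaining slopes into that range with $\psi$. The only cosmetic difference is that you flag the oppositely-oriented sign count as a parity bookkeeping step, whereas the paper derives it directly from the fact that $\n_1'$ and $\n_1''$ are joined through $I'$ into a single subarc of $\n_1$; both amount to the same verification.
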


The remaining case to consider in the proof of Proposition \ref{claim1} occurs when $\frac{m}{n} = - \frac{n \pm 1}{n}$, and we postpone our examination of this case until Section \ref{sec:hard_case}.

\section{A brief digression into square graphs}\label{sec:digress}

Recall some of the assumptions and conclusions we have made up to this point.  First, $(\A,\n,\g)$ is minimal, and so neither $G_{\g}(\A)$ nor $G_\g(\n)$ is a Type III graph.  Second, the weight $w_{\gamma}(b)=0$ in $G_{\A}(\g)$, which is a Type III graph.  This implies that $\A$ and $\g$ can be oriented so that all points of intersection occur are coherently oriented; in other words, $(\A,\g)$ is a \emph{positive Heegaard diagram}.  In this case, we consider $G_{\g}(\A)$.

\begin{lemma}\label{lemma:4cycle}
The graph $G_{\g}(\A)$ is a square graph.
\begin{proof}
Since $G_{\g}(\A)$ does not contain a wave by assumption, we know that it falls under Type I or Type II.  Refer to Figure \ref{fig:WhGraphs}.  In addition, $(\A,\g)$ is a positive Heegaard diagram, so all cross-seams in $\Sigma_{\g}(\A)$ originating from $\g_1^+$ and terminating in $\g_2^{\pm}$  must terminate in the same vertex, say $\g_2^+$.  Thus, $G_{\g}(\A)$ contains an edge connecting $\g_1^+$ to $\g_2^+$, its symmetric image connecting $\g_1^-$ to $\g_2^-$, and no edges connecting $\g_1^{\pm}$ to $\g_2^{\mp}$.  It follows that if $G_{\g}(\A)$ is a graph of Type II, then $w_{\A}(b) = 0$ and $G_{\g}(\A)$ contains a wave, a contradiction.  Thus, $G_{\g}(\A)$ is a graph of Type I and $w_{\A}(b) = 0$, as desired.
\end{proof}
\end{lemma}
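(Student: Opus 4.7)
The plan is to exploit two ingredients that have already been established in Section \ref{sec:start}: the minimality of $(\A,\n,\g)$ (which via Lemma \ref{gwave} forbids $G_\g(\A)$ from being a Type III graph) and the positivity of the Heegaard diagram $(\A,\g)$ (which follows from $w_\g(b)=0$ in $G_\A(\g)$, since with that weight zero every arc of $\g\cap\Sigma_\A$ is a seam connecting the two punctures coming from the same $\A_i$, and one may coherently orient $\A$ and $\g$ so that every intersection has the same sign). Given these, $G_\g(\A)$ is either Type I or Type II, and I want to narrow it all the way down to a square graph.

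First I would use positivity. Orient $\A$ and $\g$ coherently, and consider any cross-seam of $\Sigma_\g(\A)$ originating at $\g_1^+$. At each endpoint of the seam the local orientations determined by $\g$ and $\A$ must match, so the seam can only terminate at one of $\g_2^\pm$; without loss of generality it terminates at $\g_2^+$. By the hyperelliptic symmetry $J$, any cross-seam from $\g_1^-$ terminates at $\g_2^-$. Consequently $G_\g(\A)$ has no edge from $\g_1^+$ to $\g_2^-$ or from $\g_1^-$ to $\g_2^+$; referring to the edge labeling in Figure \ref{fig:WhGraphs}, this says $w_\A(b)=0$ in $G_\g(\A)$.

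Next I would rule out Type II. In a Type II graph the edges $a_\pm$ and $b_\pm$ are both pairs of cross-seams joining the opposite pairs of vertices, so forcing $w_\A(b)=0$ makes one pair of vertices valence one, which means the graph is actually Type III — contradicting Lemma \ref{gwave}. Hence $G_\g(\A)$ is of Type I with $w_\A(b)=0$.

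Finally, to upgrade "Type I with $w_\A(b)=0$" to a square graph, I must check that the three remaining edges $a, c, d$ all carry nonzero weight. If $w_\A(a)=0$ as well, then $\Sigma_\g(\A)$ contains no cross-seam at all, contradicting Lemma \ref{crossseam} (applied to the nontrivial diagram $(\g,\A)$). If $w_\A(c)=0$, then the vertex $\g_1^+$ in $G_\g(\A)$ has valence one (its only incident edge being $a_+$), so $G_\g(\A)$ is Type III, again contradicting Lemma \ref{gwave}; the case $w_\A(d)=0$ is symmetric, applied to $\g_2^+$. Thus all four nontrivial weights are positive and $G_\g(\A)$ is a square graph. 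The main conceptual step is the positivity argument pinning down which pair of cross-seam edges must vanish; the rest is bookkeeping against the Whitehead-graph types in Figure \ref{fig:WhGraphs}.
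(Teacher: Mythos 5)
Your proof is correct and follows essentially the same route as the paper: use positivity of $(\A,\g)$ to force all cross-seams from $\g_1^+$ into a single vertex (killing the $b$--edges), rule out Type II because that would produce a valence-one vertex and hence a wave, and conclude Type I with $w_\A(b)=0$. Your final paragraph verifying that $a$, $c$, and $d$ all have nonzero weight (via Lemma \ref{crossseam} and the no-wave assumption) is a small but genuine improvement, since the paper's proof stops at "Type I with $w_\A(b)=0$" and leaves that check implicit even though the definition of a square graph requires it.
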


Recall that we assume that the edges in a square graph or a c-graph have nonzero weights.  By Lemma \ref{crossseam}, a graph for $S^3$ with only two edges is trivial, and such a graph must be Type I with $w(a)=w(b)=0$ and $w(c)=w(d)=1$.  In keeping with Figure \ref{fig:WhGraphs}, we will refer to the edges $a_{\pm}$ of a square graph as \emph{horizontal edges} and the edges $c$ and $d$ as \emph{vertical edges}.  Note that if $(\delta,\eps)$ is a positive Heegaard diagram for $S^3$ such that $G_{\delta}(\eps)$ is a square graph, then $G_{\delta}(\eps)$ is not a Type III graph and as such, there is no wave based in $\delta$ for $(\delta,\eps)$.  By Theorem \ref{wave}, it follows that there must be a wave based in $\eps$.  In this setting, we may keep track of the structure of $G_{\delta}(\eps)$ after doing wave surgery.

\begin{lemma}\label{cgraph}
Suppose that $(\delta,\eps)$ is a positive Heegaard diagram for $S^3$ such that $G_{\delta}(\eps)$ is a square graph, and let $(\delta,\eps')$ be the result of doing wave surgery on $(\delta,\eps)$.  Then $G_{\delta}(\eps')$ is either a square graph or a c-graph.
\begin{proof}
Let $\delta = \{\delta_1,\delta_2\}$, $\eps = \{\eps_1,\eps_2\}$, and $\eps' = \{\eps_2,\eps_3\}$.  Observe that $\eps_3$ can be embedded disjointly from $\eps$ and consider $G_{\g}(\eps_1 \cup \eps_2 \cup \eps_3)$.  The curve $\eps_3$ may introduce at most one additional edge; however, doing wave surgery introduces no new points of intersection and does not change the sign of existing intersection points. Therefore, $(\delta,\eps')$ remains a positive diagram, and $G_{\delta}(\eps')$ cannot have such an edge.  Thus $G_{\delta}(\eps')$ is a subgraph of $G_{\delta}(\eps)$.

We must show that $G_{\delta}(\eps')$ has more than two edges.  Suppose by way of contradiction that $G_{\delta}(\eps')$ has two edges, so that it is the trivial diagram.  It follows that $\io(\delta,\eps_2)=1$, so we may assume that $\io(\delta_1,\eps_2) = 0$.  By Lemma \ref{missedcase}, $(\delta,\eps)$ is not a Heegaard diagram for $S^3$, a contradiction.
\end{proof}
\end{lemma}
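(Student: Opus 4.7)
The plan is to exploit the positivity of $(\delta,\eps)$ together with the fact that wave surgery creates no new intersection points with $\delta$ in order to control the edges of $G_\delta(\eps')$. Since $G_\delta(\eps)$ is a square graph, hence not of Type III, the wave guaranteed by Theorem \ref{wave} must be based in $\eps$. Write $\eps = \{\eps_1,\eps_2\}$ and $\eps' = \{\eps_2,\eps_3\}$, where $\eps_3 = \Delta_\omega$. Because $\eps_3$ is disjoint from $\eps_1\cup\eps_2$, the graph $\Sigma_\delta(\eps\cup\eps_3)$ makes sense and $\Sigma_\delta(\eps')$ sits inside it as a subgraph.

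The key structural step is to prove that $G_\delta(\eps')$ is a subgraph of $G_\delta(\eps)$. Since wave surgery neither introduces new intersection points of $\eps$ with $\delta$ nor changes the signs of existing ones, $(\delta,\eps')$ is again a positive Heegaard diagram. In a positive diagram the sign of an intersection of $\eps$ with $\delta_i$ is determined by which of $\delta_i^+,\delta_i^-$ the incident arc enters, so all cross-seams must connect ``same-signed'' punctures. The square graph $G_\delta(\eps)$ already realizes one such cross-seam class, namely $a_\pm$; the other potential class $b_\pm$ would produce intersection points of the opposite sign and is therefore forbidden by positivity. Hence every arc of $\eps_3\cap\Sigma_\delta$ lies in one of the existing edges $a_\pm$, $c$, $d$ of $G_\delta(\eps)$, which gives the inclusion $G_\delta(\eps')\subseteq G_\delta(\eps)$.

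From here the proof is a short case analysis. The $J$-symmetric subgraphs of a square graph with all edge-weights nonzero are: the full square graph; the two c-graphs $\{a_\pm,c\}$ and $\{a_\pm,d\}$; and graphs with at most two edges (either $\{a_\pm\}$ alone, $\{c,d\}$ alone, or smaller). The first two families are precisely the conclusion of the lemma, so it remains to rule out the at-most-two-edge cases. A subgraph consisting of only $\{a_\pm\}$ gives an intersection matrix whose two columns are proportional and hence singular, so $(\delta,\eps')$ could not be a diagram for $S^3$. A subgraph consisting of only $\{c,d\}$ contains no cross-seam, so by Lemma \ref{crossseam} the diagram $(\delta,\eps')$ is trivial; then $\io(\delta,\eps_2)=1$ and, after relabeling, $\io(\delta_1,\eps_2)=0$, at which point Lemma \ref{missedcase} applied to $(\delta,\eps)$ itself contradicts the hypothesis that $(\delta,\eps)$ is a diagram for $S^3$. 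Subgraphs with one or zero edges force a row of the intersection matrix to vanish and so cannot arise either.

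The step I expect to be the most delicate is the positivity/subgraph-inclusion argument: one must verify that the sign of an intersection of $\eps_3$ with $\delta_i$ is dictated purely by the vertex of $\Sigma_\delta$ that its incident arc enters, so that introducing an arc of type $b_\pm$ really does violate positivity of $(\delta,\eps')$. Everything after that reduces to a determinant calculation or to direct applications of Lemmas \ref{crossseam} and \ref{missedcase}, both already in place.
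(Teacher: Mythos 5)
Your proposal is correct and follows essentially the same route as the paper: positivity of the diagram forces $G_\delta(\eps')$ to be a subgraph of the square graph $G_\delta(\eps)$, and the small subgraphs are then excluded via the intersection matrix and Lemma \ref{missedcase} applied to the original diagram. Your case analysis is slightly more explicit than the paper's (which disposes of all two-edge subgraphs at once by citing the earlier observation that a two-edge graph for $S^3$ must be the trivial one), but the underlying argument is identical.
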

As an immediate consequence, we have the next lemma.
\begin{lemma}\label{reducing}
Suppose that $(\delta,\eps)$ is a positive Heegaard diagram for $S^3$ and $G_{\delta}(\eps)$ is a square graph.  Then there is a sequence of diagrams $(\delta,\eps) = (\delta,\eps^l),(\delta,\eps^{l-1}),\dots, (\delta,\eps^0)$ such that $(\delta,\eps^i)$ is the result of wave surgery on $(\delta,\eps^{i+1})$, the graph $G_{\delta}(\eps^i)$ is a square graph for $i > 0$, and $G_{\delta}(\eps^0)$ is a c-graph.
\begin{proof}
By Lemma \ref{cgraph}, $G_{\delta}(\eps^{l-1})$ is either a square graph or a c-graph.  By induction on $(l-i)$, suppose that $G_{\delta}(\eps^{i})$ is square graph.  Observe that $(\delta,\eps^{i})$ always contains a wave, that the wave must be based in $\eps^{i}$, and that $\delta$ is preserved by the wave surgery.  Since $\io(\delta,\eps^{i-1}) < \io(\delta,\eps^{i})$, finitely many applications of Lemma \ref{cgraph} yield the desired result.
\end{proof}
\end{lemma}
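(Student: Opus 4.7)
The plan is to prove this by a straightforward induction, iterating Lemma \ref{cgraph} until the graph becomes a c-graph, with termination ensured by the strict decrease in geometric intersection number under wave surgery.

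The inductive step is the main thing to check. Starting from a positive Heegaard diagram $(\delta, \eps^i)$ for $S^3$ whose graph $G_\delta(\eps^i)$ is a square graph, I first argue that a wave surgery can be performed on $\eps^i$. Since every vertex of a square graph has valence two, $(\delta, \eps^i)$ admits no wave based in $\delta$; and since a square graph has four edges of positive weight, $\io(\delta, \eps^i) \geq 4$, so $(\delta, \eps^i)$ is nontrivial. Theorem \ref{wave} then produces a wave, which must therefore be based in $\eps^i$. Performing this wave surgery yields a new diagram $(\delta, \eps^{i-1})$ that remains a Heegaard diagram for $S^3$ (wave surgery is a handle slide) and is still positive, as noted in the proof of Lemma \ref{cgraph}: wave surgery introduces no new intersection points nor changes the sign of existing ones. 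Lemma \ref{cgraph} then asserts that $G_\delta(\eps^{i-1})$ is either a square graph or a c-graph; in the latter case I set $\eps^0 := \eps^{i-1}$ and stop, and in the former I continue the induction.

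Termination is immediate from Proposition \ref{wavesurgery}: each wave surgery strictly decreases $\io(\delta, \eps^i)$, and since these are non-negative integers the induction halts after finitely many steps. The process can only terminate by producing a c-graph, since a square graph always admits a further wave surgery of the kind described above. I do not anticipate a significant obstacle here; the heavy lifting is done by Lemma \ref{cgraph}, and the present statement is essentially a clean packaging of its iteration.
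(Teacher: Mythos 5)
Your proof is correct and follows essentially the same route as the paper's: iterate Lemma \ref{cgraph}, noting that a square graph forces the wave to be based in $\eps^i$ and that positivity is preserved, with termination guaranteed by the strict decrease of $\io(\delta,\eps^i)$ under wave surgery. The only difference is that you spell out slightly more explicitly why the wave cannot be based in $\delta$, which the paper leaves implicit.
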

Observe that for $i >0 $, $\Sigma \setminus \nu(\delta \cup \eps^i)$ is a collection of rectangles and two octagons whose sides alternate between arcs in $\delta$ and arcs in $\eps^i$.  It follows that $G_{\eps^i}(\delta)$ is a Type III graph with $w_{\delta}(b) = 0$ and all other edge weights nonzero.  As such, there is a unique way to do wave surgery on $\eps^i$, and so we call the sequence $\{\eps = \eps^l,\eps^{l-1},\dots,\eps^0\}$ guaranteed by Lemma \ref{reducing} the \emph{reducing sequence} for $\eps$ (without ambiguity).  See Figure \ref{fig:SquareGraphs}.

The involution of $\Sigma$ maps one of the octagons cut out by $\delta$ and $\eps^i$ to the other octagon, and thus it suffices to examine one of the octagons, which we will call $O_i$.  Since the diagram is positive, any wave or band for $(\delta,\eps^i)$ must connect opposite arcs of $\eps^i$ contained in the boundary of $O_i$.  We will call a wave $\omega$ parallel to a horizontal edge of $G_{\delta}(\eps^i)$ a \emph{horizontal wave}; otherwise $\omega$ is parallel to a vertical edge and we call $\omega$ a \emph{vertical wave}.  The notions of a \emph{horizontal band} and \emph{vertical band} are defined similarly.

\begin{figure}[h!]
\centering
\includegraphics[scale = .33]{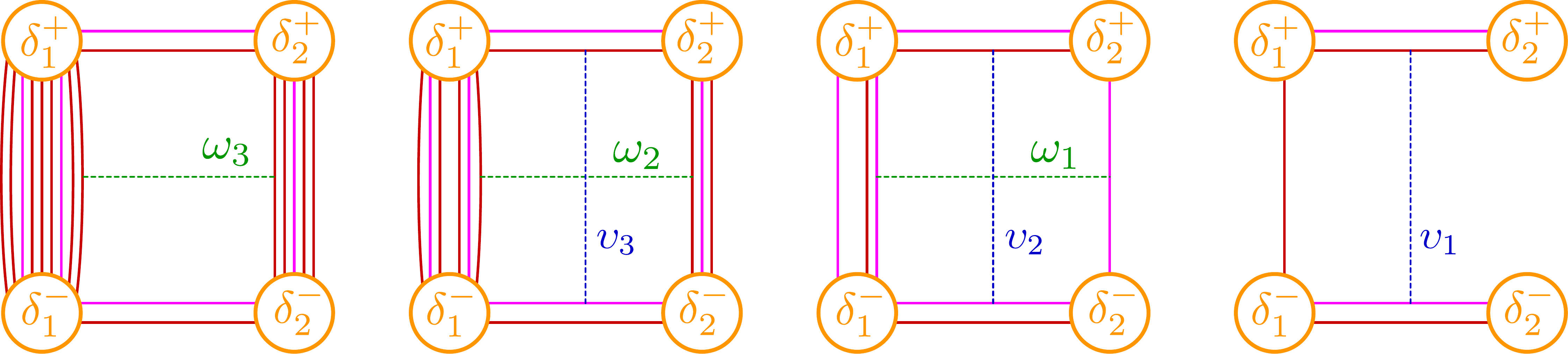}
\put(-385,-15){\large$\eps^3$}
\put(-275,-15){\large$\eps^2$}
\put(-162,-15){\large$\eps^1$}
\put(-50,-15){\large$\eps^0$}
\caption{The reducing sequence for a diagram $(\delta,\eps)$.  All four graphs shown have alternating adjacency.  Note that we have suppressed the gluing information needed to pass from $\Sigma_{\delta}(\eps^i)$ to a Heegaard diagram for $S^3$, although determining it is straightforward.}
\label{fig:SquareGraphs}
\end{figure}

Let $\omega_i$ denote the wave for $(\delta,\eps^i)$ such that surgery on $\omega_i$ yields $(\delta,\eps^{i-1})$, and let $\upsilon_i$ denote the band dual to each $\omega_i$ (so that banding $(\delta,\eps^{i-1})$ along $\upsilon_i$ yields $(\delta,\eps^i)$).  Let $\eps^i = \{\eps^i_1,\eps^i_2\}$.  Note that $\omega_i$ necessarily has endpoints in the same component of $\eps^i$, whereas $\upsilon_{i+1}$ always connects $\eps^i_1$ to $\eps^i_2$.  In addition, $\upsilon_i$ is vertical if and only if $\omega_i$ is horizontal.

\begin{lemma}\label{hor}
Every wave $\omega_i$ is a horizontal wave.
\begin{proof}
The band $\upsilon_1$ is contained in the c-graph $G_{\delta}(\eps^0)$, and thus it must be a vertical band.  It follows that $\omega_1$ is a horizontal wave.  Suppose by way of induction that $\omega_i$ is a horizontal wave in $G_{\delta}(\eps^i)$.  Since the band $\upsilon_i$ is also contained in $G_{\delta}(\eps^i)$, we have that $\upsilon_{i+1}$ cannot be parallel to $\omega_i$ since $\omega_i$ connects the same component of $\eps^i$ while $\upsilon_{i+1}$ connects distinct components of $\eps^i$.  Thus, $\upsilon_{i+1}$ is vertical, which implies that its dual wave $\omega_{i+1}$ is horizontal as well.
\end{proof}
\end{lemma}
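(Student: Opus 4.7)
The plan is to proceed by induction on $i$, using the fact that a wave $\omega_i$ for $(\delta,\eps^i)$ has both endpoints on a single component of $\eps^i$, whereas its dual band $\upsilon_{i+1}$ has endpoints on the two distinct components $\eps^i_1$ and $\eps^i_2$. Since in the positive diagram $(\delta,\eps^i)$ every wave and every band must connect opposite pairs of $\eps^i$-arcs in the octagon $O_i$, the crux of the argument is to identify which opposite pair lies within a single component of $\eps^i$.

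For the base case $i = 1$, I would inspect the c-graph $G_\delta(\eps^0)$, whose only nonzero-weight edges are the horizontal pair $a_+, a_-$ and the single vertical edge $c$. By the alternating adjacency present throughout the reducing sequence, the arcs of $a_\pm$ lie in one component of $\eps^0$ while the arcs of $c$ lie in the other. Consequently, a band between the two distinct components of $\eps^0$ must run parallel to $c$; that is, $\upsilon_1$ is a vertical band. By the duality between wave surgery and banding, the dual wave $\omega_1$ is then horizontal.

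For the inductive step, assume $\omega_i$ is a horizontal wave for some $i \geq 1$. Since $\omega_i$ is a wave connecting the horizontal pair of $\eps^i$-arcs in $O_i$, that entire horizontal pair sits on a single component of $\eps^i$. The band $\upsilon_{i+1}$ lies in the same octagon $O_i$ but connects the two distinct components of $\eps^i$, so it cannot be parallel to $\omega_i$; it must instead be parallel to the remaining (vertical) opposite pair of arcs. Hence $\upsilon_{i+1}$ is a vertical band, and its dual wave $\omega_{i+1}$ in $G_\delta(\eps^{i+1})$ is therefore horizontal, closing the induction.

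The main delicacy is confirming the base case -- specifically, using alternating adjacency in the c-graph to force the horizontal arcs onto a single component of $\eps^0$, so that any band across components is forced to be parallel to $c$. Once that is set up, the inductive step follows essentially from the definitions of wave and band, together with the orthogonality of each dual wave-band pair inside its octagon.
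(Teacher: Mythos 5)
Your inductive step is exactly the paper's argument: once $\omega_i$ is horizontal, the two horizontal $\eps^i$--arcs of $O_i$ lie on a single component of $\eps^i$, while $\upsilon_{i+1}$ joins distinct components, so $\upsilon_{i+1}$ cannot be parallel to $\omega_i$ and must be vertical, whence $\omega_{i+1}$ is horizontal. The base case is where your write-up has a genuine problem, in two respects. First, you invoke ``alternating adjacency present throughout the reducing sequence,'' but alternating adjacency is established only afterwards (Lemmas \ref{adjpreserve}--\ref{alt}), and the proof of Lemma \ref{adjpreserve} begins by citing Lemma \ref{hor} to conclude that $\upsilon_{i+1}$ is vertical; using it here is circular. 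Second, the structural claim you extract --- that all arcs of $a_\pm$ lie in one component of $\eps^0$ and all arcs of $c$ in the other --- is false in general: since $(\delta,\eps^0)$ is positive, that segregation would make the intersection matrix triangular with determinant $\pm w_{\eps^0}(a)\,w_{\eps^0}(c)$, so Lemma \ref{determ} would force both weights to equal $1$, whereas the c-graphs terminating a reducing sequence typically have larger weights (indeed Lemma \ref{weights} shows both components of $\eps^0$ carry arcs in the edge $a$). Even granting the dichotomy, a band joining an $a$--arc to a $c$--arc would not thereby be ``parallel to $c$.''

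The paper's base case uses only that $\upsilon_1$ lies in the c-graph $G_{\delta}(\eps^0)$, and a correct way to fill in the word ``thus'' is to compare edge weights across the banding: $G_{\delta}(\eps^1)$ is a square graph by Lemma \ref{reducing}, so its vertical edge $d$ has nonzero weight, while $w_{\eps^0}(d)=0$ in the c-graph. Banding replaces only the two arcs at the ends of $\upsilon_1$ by arcs running in the direction of the band and leaves every other arc of $\eps^1$ a parallel push-off of an arc of $\eps^0$; hence the arcs of $\eps^1$ in the edge $d$ can only come from the band itself, which forces $\upsilon_1$ to be vertical. Replacing your base-case paragraph with this observation repairs the proof; the rest of your argument coincides with the paper's.
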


Here we introduce some terminology to better understand what happens to arcs of $\Sigma_{\delta}(\eps^i)$ when two curves are banded together.  We say that two arcs in $\Sigma_{\delta}(\eps^i)$ are \emph{adjacent} if they cobound a rectangle in $\Sigma \setminus \nu(\delta \cup \eps^i)$ with arcs in $\delta$.

\begin{lemma}\label{weights}
If $\eps^i_3$ is the result of banding $\eps^i_1$ to $\eps^i_2$ along $\upsilon_{i+i}$, then $\io(\delta,\eps^i_3) = \io(\delta,\eps^i)$.  In addition, in $\Sigma_{\delta}(\eps^i \cup \eps^i_3)$ every arc of $\eps^i$ is adjacent to an arc of $\eps^i_3$.
\begin{proof}
In order to construct $\eps^i_3$ from $\eps^i_1$ and $\eps^i_2$ in $\Sigma_{\delta}$, we begin with two parallel copies of $\eps^i_1$ and $\eps^i_2$ which have been pushed off of $\eps^i$ so that two of their horizontal arcs lie in the octagon $O_i$.  Attaching the band $\upsilon_{i+1}$ to these two arcs transforms them into two vertical arcs and leaves all other arcs of $\eps^i$ unchanged, proving the first statement.  Since the vertical band $\upsilon_{i+1}$ connects $\eps^i_1$ to $\eps^i_2$, we have $w_{\eps^i_j}(a) \geq 1$ for $j = 1,2$.  Thus $w_{\eps^i}(a) \geq 2$, and every arc of $\eps^i$ is adjacent to some arc of its push-off that has not been changed by attaching the band $\upsilon_{i+1}$.  See Figure \ref{fig:AltAdj}.
\end{proof}
\end{lemma}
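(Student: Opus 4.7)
The plan is to construct $\eps^i_3$ explicitly as a pushed-off resolution of the banding operation and then read off both statements directly from that construction. First, I would work entirely inside $\Sigma_\delta$, the four-punctured sphere cut out by $\delta$. By Lemma \ref{hor}, the wave $\omega_{i+1}$ is horizontal, so the dual band $\upsilon_{i+1}$ is vertical; moreover, $\upsilon_{i+1}$ is disjoint from $\eps^i$ in its interior, so it sits inside one of the two octagonal complementary regions of $\delta \cup \eps^i$, namely $O_i$ (with its image under the hyperelliptic involution sitting in the other octagon and providing the symmetric band). I would then take parallel push-offs of $\eps^i_1$ and $\eps^i_2$, chosen so that exactly two of the horizontal arcs in their push-offs lie inside $O_i$ on either side of $\upsilon_{i+1}$, and form $\eps^i_3$ by surgering these two push-off arcs along $\upsilon_{i+1}$.

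For the first claim, observe that the push-off has exactly the same intersections with $\delta$ as $\eps^i$ does, since the push-off is parallel to $\eps^i$ in $\Sigma_\delta$, and the band $\upsilon_{i+1}$ lies entirely in the interior of $O_i$ (hence disjoint from $\delta$). The surgery replaces two horizontal arcs of the push-off by two vertical arcs inside $O_i$, with no change to any other arc and no creation or destruction of intersections with $\delta$; therefore $\io(\delta,\eps^i_3) = \io(\delta,\eps^i)$. For the second claim, I would argue that in $\Sigma_\delta(\eps^i \cup \eps^i_3)$, every horizontal arc of $\eps^i_1$ or $\eps^i_2$ other than the two modified by the band survives in $\eps^i_3$ as its own parallel push-off, and is therefore adjacent (through a thin rectangle in $\Sigma \setminus \nu(\delta \cup \eps^i)$) to the original arc it was pushed off of. The only arcs needing extra attention are the two horizontal arcs sitting in $O_i$ that were consumed by the surgery; since the band contributes at least one new vertical arc from each of $\eps^i_1$ and $\eps^i_2$, giving $w_{\eps^i_3}(a) \geq 2$ via the push-off, each original horizontal arc in $O_i$ retains at least one parallel push-off arc of $\eps^i_3$ adjacent to it. I do not anticipate a serious obstacle here: the argument is essentially a bookkeeping exercise once the band is placed correctly, and Lemma \ref{hor} has already pinned down the only piece of geometric input (verticality of $\upsilon_{i+1}$) that could have caused trouble.
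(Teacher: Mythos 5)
Your proposal follows the paper's proof essentially verbatim: both construct $\eps^i_3$ by band surgery on push-offs of $\eps^i_1$ and $\eps^i_2$ inside $O_i$, read off $\io(\delta,\eps^i_3)=\io(\delta,\eps^i)$ from the fact that two horizontal arcs are traded for two vertical arcs without creating or destroying intersections with $\delta$, and handle the adjacency of the two consumed arcs by a weight count on the horizontal edge $a$. The only (minor) discrepancy is in the last step, where you assert $w_{\eps^i_3}(a)\geq 2$ as a consequence of the band producing \emph{vertical} arcs; the paper instead uses $w_{\eps^i_j}(a)\geq 1$ for $j=1,2$, hence $w_{\eps^i}(a)\geq 2$, which is the count that actually guarantees each of the two exceptional horizontal arcs still has a surviving push-off arc adjacent to it.
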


Now, we return to the trisection diagram $(\A,\n,\g)$.  By Lemma \ref{lemma:4cycle}, $G_{\g}(\A)$ is a square graph, and, as such, $\A$ has a reducing sequence $\{\A = \A^l,\A^{l-1},\dots,\A^0\}$ by Lemma \ref{reducing}.  The next lemma examines this reducing sequence more closely.

\begin{lemma}\label{onearc}
Suppose that $\A^0$ and $\A^l$ have a curve $\A^0_*$ in common.  Then $(\A,\n,\g)$ is not minimal.
\begin{proof}
Let $\A^l = \{\A^l_1,\A^0_*\}$.  If $(\A,\n,\g)$ is minimal, then $\io(\A_1,\g) \geq \io(\A_2,\g)$, and by Lemma \ref{weights}, $\io(\A^l_1,\g) \geq \io(\A^0,\g) > \io(\A^0_*,\g)$.  It follows that $\A_1 = \A^l_1$ and $\A_2 = \A^0_*$.    In addition, we have that $G_{\g}(\A_2)$ is a subgraph of the c-graph $G_{\g}(\A^0)$.  If $G_{\g}(\A_2)$ contains a cross-seam, them $(\A,\n,\g)$ is not minimal by Lemma \ref{g2wave}.  Otherwise, $\io(\A_2,\g_j) = 0$ for either $j =1$ or $j=2$; therefore, $(\A,\g)$ is not a Heegaard diagram for $S^3$ by Lemma \ref{missedcase}.
\end{proof}
\end{lemma}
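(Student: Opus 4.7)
The plan is to use the intersection-number bookkeeping from Lemma \ref{weights} together with the c-graph structure of $G_{\g}(\A^0)$ to pin down which curves of $\A^l = \A$ play the roles of $\A_1$ and $\A_2$, and then exploit Lemma \ref{missedcase} to close off one case. Write $\A^l = \{\A^l_1, \A^0_*\}$. Going backwards along the reducing sequence $\A^0 \to \A^1 \to \cdots \to \A^l$, each step is a banding that by Lemma \ref{weights} produces a new curve whose intersection with $\g$ is the sum of the intersections of the two banded curves. Iterating this along the chain shows that the ``other'' curve of $\A^l$, namely $\A^l_1$, satisfies $\io(\A^l_1, \g) \geq \io(\A^0, \g)$. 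Since $G_{\g}(\A^0)$ is a c-graph, both of its nonzero edges $a$ and $c$ have positive weight, so both curves of $\A^0$ intersect $\g$ nontrivially and hence $\io(\A^0, \g) > \io(\A^0_*, \g)$.

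Combining these inequalities gives $\io(\A^l_1, \g) > \io(\A^0_*, \g)$. The minimality convention $\io(\A_1, \g) \geq \io(\A_2, \g)$ then forces $\A_1 = \A^l_1$ and $\A_2 = \A^0_*$. In particular, $\A_2$ is one of the two original curves of $\A^0$, so $G_{\g}(\A_2)$ sits inside the c-graph $G_{\g}(\A^0)$, whose only available edges are the symmetric cross-seams $a_\pm$ (connecting $\g_1^\pm$ to $\g_2^\pm$) and the vertical edge $c$ (connecting $\g_1^+$ to $\g_1^-$).

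From here I would dispatch the remaining argument in two cases. If $G_{\g}(\A_2)$ contains a cross-seam, then the vertices $\g_2^\pm$ have valence at most one in $G_{\g}(\A_2)$ (since the only edges of $G_{\g}(\A^0)$ meeting $\g_2^\pm$ are the $a_\pm$ edges), so $G_{\g}(\A_2)$ is a Type III graph containing a cross-seam. Lemma \ref{g2wave} then immediately yields that $(\A,\n,\g)$ is not minimal. Otherwise $\A_2$ meets only the vertical edge $c$, which forces $\io(\A_2, \g_2) = 0$. Lemma \ref{lemma:4cycle} has already shown that $G_{\g}(\A)$ is a square graph, so Lemma \ref{missedcase} applied with $\delta = \g$ and $\eps = \A$ declares that $(\A, \g)$ is not a Heegaard diagram for $S^3$, contradicting the fact that $(\A,\n,\g)$ is a trisection diagram.

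The one moderately delicate step is justifying $\io(\A^l_1, \g) \geq \io(\A^0, \g)$; this requires care about how curves are carried along the reducing sequence and is where Lemma \ref{weights} does the real work. Once that inequality is in hand, the remainder reduces to a clean two-case dispatch against already-established obstructions, so this step is the main obstacle.
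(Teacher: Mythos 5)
Your proposal is correct and follows essentially the same route as the paper: it uses Lemma \ref{weights} to force $\A_1 = \A^l_1$ and $\A_2 = \A^0_*$, then splits on whether $G_{\g}(\A_2)$ (a subgraph of the c-graph $G_{\g}(\A^0)$) contains a cross-seam, dispatching the two cases via Lemma \ref{g2wave} and Lemma \ref{missedcase} exactly as the paper does. You actually supply slightly more detail than the paper on the two points it glosses over --- why the non-common curve of $\A^l$ must be the last band-sum (so that $\io(\A^l_1,\g)\geq\io(\A^0,\g)$) and why the cross-seam case yields a Type III graph --- though note that "both curves of $\A^0$ meet $\g$" is most cleanly justified by Lemma \ref{determ} rather than by the positivity of the edge weights alone.
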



In order to deal with the intricacy of the remaining case in Section \ref{sec:hard_case}, we will need to keep track of the cyclic ordering of arcs at the vertices of $\Sigma_{\A}(\g)$.  One way to obtain this information is to understand adjacency in $\Sigma_{\g}(\A)$, since the regions of $\Sigma \setminus \nu(\A \cup \g)$ appear in both graphs.  We say that $(\A^i,\g)$ has \emph{upper alternating adjacency} (resp. \emph{lower alternating adjacency}) if the horizontal boundary arc of $O_i$ in the edge $a^+$ (resp. $a^-$)  is adjacent to an arc in the opposite curve of $\A^i$.  See Figure \ref{fig:SquareGraphs}.  If $(\A^i,\g)$ has both upper and lower alternating adjacencies, we say that it has \emph{alternating adjacency}.  The remainder of the section includes several lemmas which establish that $G_{\g}(\A)$ has alternating adjacency.

\begin{lemma}\label{adjpreserve}
If $(\A^i,\g)$ has upper (resp. lower) alternating adjacency, then $(\A^{i+1},\g)$ has upper (resp. lower) alternating adjacency.
\begin{proof}
By Lemma \ref{hor}, the band $\upsilon_{i+1}$ dual to the horizontal wave $\omega_{i+1}$ is vertical.  Let $\A^i_3$ correspond to the curve created by banding $\A^i_1$ to $\A^i_2$ along $\upsilon_{i+1}$, so that $\A^{i+1}$ is either $\{\A^i_1,\A^i_3\}$ or $\{\A^i_2,\A^i_3\}$.

Let $\A^* = \{\A^i_1,\A^i_2,\A^i_3\}$ and consider $\Sigma_\g(\A^*)$, letting $O_*$ denote the octagon in $\Sigma \setminus \nu(\A^* \cup \g)$ such that $O_* \subset O_{i+1}$.  By the proof of Lemma \ref{weights}, both vertical arcs of $\pd O_*$ in $\A^*$ are in $\A^i_3$, and $\pd O_*$ has one horizontal arc in each of $\A^i_1$ and $\A^i_2$; call these arcs $\A^*_1$ and $\A^*_2$, respectively.  Suppose that $\A^*_1$ is in the edge $a_+$ (resp. $a_-$).  Then by Lemma \ref{weights}, $\A^*_1$ is adjacent to an arc in $\A^i_3$, which in turn is adjacent to an arc in $\A^i_2$ by the upper (resp. lower) alternating adjacency of $\{\A^i,\g\}$.  Thus, independently of whether $\A^i_1 \in \A^{i+1}$ or $\A^i_2 \in \A^{i+1}$, the diagram $(\A^{i+1},\g)$ must also have upper (resp. lower) alternating adjacency.  See Figure \ref{fig:AltAdj}.
\end{proof}
\end{lemma}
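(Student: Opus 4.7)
The plan is to reduce the problem to tracking adjacencies in a three-curve configuration sitting between $\Sigma_\g(\A^i)$ and $\Sigma_\g(\A^{i+1})$. First, I invoke Lemma \ref{hor} to see that $\omega_{i+1}$ is a horizontal wave, so the dual band $\upsilon_{i+1}$ is vertical; let $\A^i_3$ be the curve obtained by banding $\A^i_1$ to $\A^i_2$ along $\upsilon_{i+1}$, so that $\A^{i+1}$ is either $\{\A^i_1,\A^i_3\}$ or $\{\A^i_2,\A^i_3\}$. Our goal is to show upper (resp.\ lower) alternating adjacency transfers from $(\A^i,\g)$ to $(\A^{i+1},\g)$ regardless of which of the two possibilities occurs.

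Next, I would introduce the auxiliary system $\A^* = \{\A^i_1,\A^i_2,\A^i_3\}$ and examine $\Sigma_\g(\A^*)$. Because $\A^*$ has one more curve than $\A^{i+1}$, the octagon $O_{i+1}$ of $\Sigma\setminus\nu(\A^{i+1}\cup\g)$ is subdivided, and I would focus on the sub-octagon $O_* \subset O_{i+1}$ of $\Sigma\setminus\nu(\A^*\cup\g)$. Using the description of banding in the proof of Lemma \ref{weights}, the two vertical boundary arcs of $O_*$ lie in $\A^i_3$ (these are the parts introduced by the vertical band $\upsilon_{i+1}$), while $O_*$ inherits one horizontal boundary arc in each of $\A^i_1$ and $\A^i_2$; call these $\A^*_1$ and $\A^*_2$.

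Now the adjacency chase is the heart of the argument. Suppose $\A^*_1 \subset \A^i_1$ lies in the edge $a_+$ (the other case being symmetric). Lemma \ref{weights} guarantees that in $\Sigma_\g(\A^*)$ the arc $\A^*_1$ is adjacent to some arc of $\A^i_3$, which sits in the same edge $a_+$ as $\A^*_1$. Since that $\A^i_3$-arc locally coincides with a push-off of an arc in $\A^i_1$ originally adjacent to $\A^*_1$ in $\Sigma_\g(\A^i)$, upper alternating adjacency of $(\A^i,\g)$ forces the next arc outward to lie in $\A^i_2$. Consequently the $\A^i_3$-arc is adjacent to an arc of $\A^i_2$ in $\Sigma_\g(\A^*)$. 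Collapsing back from $\A^*$ to $\A^{i+1}$, the horizontal $a_+$-arc of $\partial O_{i+1}$ is then adjacent to an arc of the opposite curve of $\A^{i+1}$, whether $\A^{i+1} = \{\A^i_1,\A^i_3\}$ or $\{\A^i_2,\A^i_3\}$. This yields upper alternating adjacency for $(\A^{i+1},\g)$; the lower case is identical after replacing $a_+$ by $a_-$.

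The main obstacle I expect is bookkeeping: verifying that the banding picture inside $O_i$ accurately predicts the structure of $O_*$, in particular that the vertical band contributes exactly the vertical boundary arcs of $O_*$ while preserving the horizontal adjacency inherited from $\Sigma_\g(\A^i)$. A careful figure in the spirit of Figure \ref{fig:AltAdj} should make this transparent.
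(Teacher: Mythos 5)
Your proposal is correct and follows essentially the same route as the paper's proof: the vertical band from Lemma \ref{hor}, the auxiliary system $\A^* = \{\A^i_1,\A^i_2,\A^i_3\}$ with its sub-octagon $O_* \subset O_{i+1}$, and the adjacency chase $\A^*_1 \to \A^i_3 \to \A^i_2$ via Lemma \ref{weights} and the alternating adjacency of $(\A^i,\g)$ all match. The only quibble is your parenthetical that the intervening $\A^i_3$-arc is a push-off of an arc of $\A^i_1$ ``originally adjacent to $\A^*_1$''; by upper alternating adjacency the arc originally adjacent to $\A^*_1$ lies in $\A^i_2$, but this wording slip does not affect the argument.
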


\begin{figure}[h!]
\centering
\includegraphics[scale = .45]{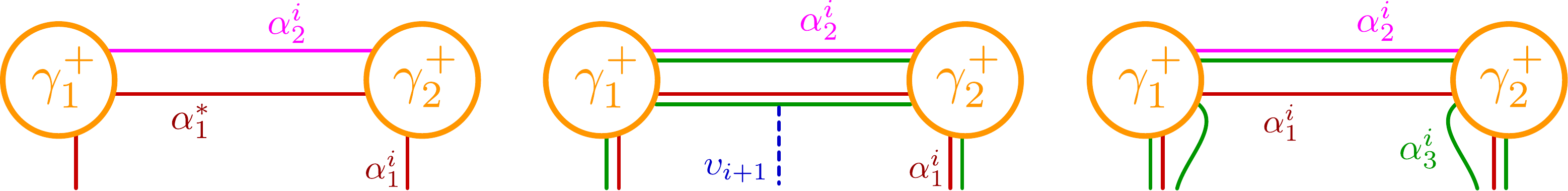}
\put(-350,-15){(a)}
\put(-208,-15){(b)}
\put(-68,-15){(c)}
\caption{The graph $G_\g(\alpha^i)$:  (a) exhibits upper alternating adjacency, (b) shows the parallel push offs of $\alpha_1^i$ and $\alpha_2^i$, and (c) shows $\alpha_3^i$, the result of the banding.}
\label{fig:AltAdj}
\end{figure}

Note that, in the proof of Lemma \ref{adjpreserve}, if $\A^i_1\in \A^{i+1}$, then we need not require that $(\A^i,\g)$ have upper (resp. lower) alternating adjacency.  The next lemma follows immediately.

\begin{lemma}\label{adjnew}
If an arc in $\A^i_j$ is in the upper (resp. lower) horizontal boundary of $O_i$ and $\A^i_j \in \A^{i+1}$, then $(\A^{i+1},\g)$ has upper (resp. lower) alternating adjacency.
\end{lemma}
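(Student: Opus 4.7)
The plan is to mirror the argument of Lemma \ref{adjpreserve}, observing that the added hypothesis $\A^i_j \in \A^{i+1}$ shortens the chain of adjacencies and removes the need to assume alternating adjacency of $(\A^i,\g)$ in the first place.

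As in the proof of Lemma \ref{adjpreserve}, let $\A^i_3$ denote the curve obtained by banding $\A^i_1$ to $\A^i_2$ along the vertical band $\upsilon_{i+1}$, so that $\A^{i+1}$ consists of $\A^i_3$ together with one of $\A^i_1$ or $\A^i_2$. Set $\A^* = \{\A^i_1,\A^i_2,\A^i_3\}$ and let $O_*$ be the octagonal region of $\Sigma \setminus \nu(\A^* \cup \g)$ with $O_* \subset O_{i+1}$. By the proof of Lemma \ref{weights}, the two vertical boundary arcs of $O_*$ in $\A^*$ lie in $\A^i_3$, while its two horizontal boundary arcs $\A^*_1$ and $\A^*_2$ lie in $\A^i_1$ and $\A^i_2$, respectively.

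Now suppose we are in the upper case: $\A^*_j$ sits in the upper horizontal boundary of $O_i$ (and hence of $O_{i+1}$), and $\A^i_j \in \A^{i+1}$. By Lemma \ref{weights}, every arc of $\A^i$ in $\Sigma_\g(\A^*)$ is adjacent to an arc of $\A^i_3$; in particular $\A^*_j$ is adjacent to an arc $\A^*_3 \subset \A^i_3$. Since $\A^{i+1} = \{\A^i_j,\A^i_3\}$, the curve $\A^i_3$ is exactly the curve of $\A^{i+1}$ opposite to $\A^i_j$. Thus the horizontal boundary arc $\A^*_j$ of $\pd O_{i+1}$ lying in $\A^i_j$ is adjacent in $\Sigma_\g(\A^{i+1})$ to an arc in the opposite curve of $\A^{i+1}$, establishing upper alternating adjacency of $(\A^{i+1},\g)$. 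The lower case is identical with $a_-$ in place of $a_+$ throughout.

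The one subtlety, and the whole point of the lemma, is that we avoid the intermediate step in the proof of Lemma \ref{adjpreserve} that uses alternating adjacency of $(\A^i,\g)$ to extend adjacency from $\A^i_3$ back to $\A^i_2$; the hypothesis $\A^i_j \in \A^{i+1}$ makes this extension unnecessary, since $\A^i_j$ is already the curve of $\A^{i+1}$ that is opposite to $\A^i_3$.
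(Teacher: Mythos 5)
Your proposal is correct and follows the paper's own route exactly: the paper disposes of this lemma with the remark immediately preceding it, namely that in the proof of Lemma \ref{adjpreserve} the alternating-adjacency hypothesis on $(\A^i,\g)$ is only used to pass from the push-off arc of $\A^i_3$ back to the \emph{other} curve of $\A^i$, a step that is unnecessary when the curve $\A^i_j$ containing the relevant horizontal arc of $\pd O_i$ survives into $\A^{i+1}$. You have simply written out that observation in full, using the same ingredients ($\A^*$, $O_*$, and Lemma \ref{weights}).
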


Now, we combine Lemmas \ref{adjpreserve} and \ref{adjnew} to determine adjacency for $(\A,\g)$ when $(\A,\n,\g)$ is minimal.

\begin{lemma}\label{nocommon}
If $\A^0$ and $\A^l$ do not have a curve in common, then $(\A^l,\g)$ has alternating adjacency.
\begin{proof}
Note that the band $\upsilon_1$ is vertical, so it connects $\A^0_1$ to $\A^0_2$.  Suppose without loss of generality that $\upsilon_1$ connects an upper horizontal arc of $\A^0_1$ to a lower horizontal arc of $\A^0_2$, and suppose further that $\A^0_1 \in \A^1$.  By Lemma \ref{adjnew}, $(\A^1,\g)$ has upper alternating adjacency, and by Lemma \ref{adjpreserve}, $(\A^i,\g)$ has upper alternating adjacency for all $i > 0$.  Let $k$ be the largest index such that $\A^0_1 \in \A^k$, and suppose $\A^k = \{\A^0_1,\A^k_2\}$.  By assumption, $k < l$, and following the proof of Lemma \ref{adjpreserve}, we have that the upper horizontal boundary of $O_k$ is an arc in $\A^0_1$ while the lower boundary is an arc in $\A^k_2$.  Since $\A^0_1 \notin \A^{k+1}$, we have $\A^k_2 \in \A^{k+1}$ and thus $(\A^{k+1},\g)$ has lower alternating adjacency by Lemma \ref{adjnew}.  Finally, Lemma \ref{adjpreserve} implies that $(\A^i,\g)$ has lower alternating adjacency for all $i > k$, and thus $(\A_l,\g)$ has alternating adjacency as desired.
\end{proof}
\end{lemma}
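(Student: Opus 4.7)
The plan is to propagate adjacency information through the reducing sequence starting from its endpoint at the c-graph $\A^0$ and moving forward to $\A^l$, using the two preservation lemmas (Lemmas \ref{adjpreserve} and \ref{adjnew}) as the main tools. The decisive observation is that the very first band $\upsilon_1$ which takes $\A^0$ to $\A^1$ is vertical by Lemma \ref{hor}, and since it must connect the two different components $\A^0_1$ and $\A^0_2$ of $\A^0$, its two endpoints land on opposite horizontal sides of the octagon $O_0$. Without loss of generality, I will assume that $\upsilon_1$ meets $\A^0_1$ along the upper horizontal boundary of $O_0$ and $\A^0_2$ along the lower one.

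First I would establish one direction of the alternating adjacency. Whichever curve of $\A^0$ survives into $\A^1$ has an arc on a particular horizontal side of $O_0$ (upper if $\A^0_1 \in \A^1$, lower if $\A^0_2 \in \A^1$). Applying Lemma \ref{adjnew} at index $0$ immediately gives $(\A^1,\g)$ the corresponding upper (or lower) alternating adjacency, and iterating Lemma \ref{adjpreserve} then transports that adjacency all the way up to $(\A^l,\g)$. So after a harmless relabeling I may assume that $(\A^l,\g)$ already possesses, say, upper alternating adjacency.

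The real work is to produce the opposite (lower) adjacency, and here I would invoke the hypothesis $\A^0 \cap \A^l = \emp$. Let $k$ be the largest index with $\A^0_1 \in \A^k$; the disjointness hypothesis forces $k < l$. Write $\A^k = \{\A^0_1, \A^k_2\}$. Since $\A^0_1 \notin \A^{k+1}$, the band $\upsilon_{k+1}$ must kill $\A^0_1$, and the curve of $\A^{k+1}$ inherited from $\A^k$ is $\A^k_2$. The crucial geometric claim is that at the octagon $O_k$ the upper horizontal boundary arc still lies in $\A^0_1$ while the lower horizontal boundary arc lies in $\A^k_2$. Granting this, Lemma \ref{adjnew} applied at index $k$ with the curve $\A^k_2$ gives $(\A^{k+1},\g)$ lower alternating adjacency, and a final application of Lemma \ref{adjpreserve} pushes lower alternating adjacency up to $(\A^l,\g)$. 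Combined with the upper adjacency from the previous paragraph, this yields alternating adjacency.

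I expect the main obstacle to be justifying that geometric claim about $O_k$, namely that ``upper stays upper" for the descendants of $\A^0_1$ throughout the intermediate bandings. The natural route is an induction on $i$ that parallels the argument inside the proof of Lemma \ref{adjpreserve}: at each step the vertical band $\upsilon_{i+1}$ transforms two horizontal arcs of the parallel push-offs into vertical ones, while leaving the remaining horizontal arcs (in particular the topmost arc of the push-off of $\A^0_1$) unchanged, so the residue of $\A^0_1$ always retains an arc forming the upper horizontal side of the octagon until the moment $\A^0_1$ is finally banded away at step $k+1$. Once this bookkeeping is in place, both adjacency conclusions follow immediately and the lemma is proved.
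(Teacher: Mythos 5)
Your proposal is correct and follows essentially the same route as the paper: establish one adjacency from the vertical band $\upsilon_1$ via Lemma \ref{adjnew} and propagate it with Lemma \ref{adjpreserve}, then use the largest index $k$ with $\A^0_1 \in \A^k$ (which exists with $k<l$ by the disjointness hypothesis) to trigger the opposite adjacency at step $k+1$. The ``geometric claim'' you flag as the main obstacle is handled in the paper exactly as you suggest, by tracking the horizontal boundary arcs of the octagons through the proof of Lemma \ref{adjpreserve}.
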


In summary, we have the following.

\begin{lemma}\label{alt}
If $G_{\g}(\A)$ is a square graph, then $(\A,\g)$ has alternating adjacency or $(\A,\n,\g)$ is not minimal.
\begin{proof}
By Lemma \ref{reducing}, there is a reducing sequence $\A = \A^l,\A^{l-1},\dots,\A^0$ for $(\A,\g)$.  If $\A^l$ and $\A^0$ have no curve in common, then $(\A,\g)$ has alternating adjacency by Lemma \ref{nocommon}.  Otherwise, $(\A,\n,\g)$ is not minimal by Lemma \ref{onearc}.
\end{proof}
\end{lemma}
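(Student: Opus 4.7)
The plan is to invoke Lemma \ref{reducing} to produce a reducing sequence $\A = \A^l, \A^{l-1}, \dots, \A^0$ for the diagram $(\A,\g)$, which is a positive Heegaard diagram for $S^3$ (by the discussion at the start of Section \ref{sec:digress}) whose graph $G_\g(\A)$ is a square graph. By Lemma \ref{reducing}, each intermediate $G_\g(\A^i)$ for $i>0$ is a square graph and $G_\g(\A^0)$ is a c-graph, so the machinery of the preceding lemmas applies all the way down the sequence.

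Once the reducing sequence is in hand, I would split into two cases according to whether the terminal cut systems $\A^l$ and $\A^0$ share a common curve. In the case that they do not share a curve, Lemma \ref{nocommon} applies directly and yields that $(\A^l,\g) = (\A,\g)$ has alternating adjacency, which is the first alternative in the statement. In the case that they do share a curve, Lemma \ref{onearc} applies and gives that $(\A,\n,\g)$ is not minimal, which is the second alternative. Thus the dichotomy exhausts all possibilities.

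The proof is essentially immediate once the two main inputs are in place; all of the intricate combinatorial work (tracking which arcs sit in which edges of $O_i$ through a sequence of banding moves, and controlling the behavior of adjacency under the handle slides implicit in each wave surgery) was already carried out in Lemmas \ref{adjpreserve}, \ref{adjnew}, \ref{nocommon}, and \ref{onearc}. So there is no real obstacle at this step — the lemma is a clean packaging of those two cases into a single statement that can be cited later in Section \ref{sec:hard_case}. The only thing to be careful about is to note that Lemma \ref{reducing} requires $(\A,\g)$ to be a positive Heegaard diagram for $S^3$ with $G_\g(\A)$ a square graph, both of which hold under our standing hypotheses in this section (the positivity from $w_\g(b)=0$, and the square graph hypothesis being given in the statement).
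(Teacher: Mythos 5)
Your proof is correct and follows exactly the same route as the paper's: invoke Lemma \ref{reducing} to obtain the reducing sequence, then split on whether $\A^l$ and $\A^0$ share a curve, citing Lemma \ref{nocommon} in one case and Lemma \ref{onearc} in the other. The extra remark verifying the hypotheses of Lemma \ref{reducing} (positivity and the square-graph condition) is a sensible addition but does not change the argument.
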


\section{The case in which $m = -n \pm 1$}\label{sec:hard_case}

Up to this point in the paper, our arguments have not required information about the cyclic ordering of the intersections of arcs with the fat vertices in $\Sigma_{\A}(\n, \g)$.  However, this case is significantly more delicate than the preceding ones, and thus we must be more precise about how we recover $(\A,\n,\g)$ from $\Sigma_{\A}(\n,\g)$.  We will assume that $\n$ has been isotoped so that all inessential intersections between $\n$ and $\g$ correspond to triangles based in $\A_1^+$ or $\A_2^+$.

Let us recall our current assumptions.    First, neither diagram $(\A,\g)$ or $(\n,\g)$ is trivial, there is no wave based in $\g$, and the trisection diagram $(\A,\n,\g)$ is minimal.  Second, $G_{\A}(\g)$ is a Type III graph such that $w_b(\g) = 0$ and all other weights are nonzero. This implies that $\Sigma \setminus \nu(\A \cup \g)$ contains some number of rectangular regions and two octagonal regions, which we call $O'$ and $O''$.   Third, $G_\g(\A)$ is a square graph with alternating adjacency and $(\A,\g)$ contains a wave based in $\A_1$.  Fourth, by Lemma \ref{nogwave} we may assume that $(\n,\g)$ does not contain a wave based in $\g$, so that it must contain a wave based in $\n$.  Finally, the slopes of the $\n$--arcs in $\Sigma_\A(\n,\g)$ are both $-\frac{n\pm1}{n}$.  Since the slopes of the $\n$--arcs differ from the slopes of the $\g$--arcs, and since there are no inessential intersection points based at $\A_i^-$, $\n_i$ intersects $\A_i^-$ in an edge of either $O'$ or $O''$ for $i=1,2$.



\begin{lemma}\label{weightone}
If $G_{\g}(\A)$ has an edge of weight one, then  $(\A,\n,\g)$ is not minimal. 
\begin{proof}
Suppose that in the square graph $G_{\g}(\A)$, one of the edges $a_{\pm}$, $c$, or $d$ has weight one (see Figure \ref{fig:WhGraphs}).  Note that each octagon $O'$ and $O''$ contains three boundary components in $\A_1$ and one boundary component in $\A_2$.  By Lemma \ref{reducing}, there is a hortizontal wave contained in $\Sigma_{\g}(\A)$, and so both vertical boundary arcs of $O'$ in $\Sigma_{\g}(\A)$ are in $\A_1$. This means that $O'$ has horizontal boundary arcs in both $\A_1$ and $\A_2$; thus $w_{\A}(a) \geq 2$.

Consequently, we suppose without loss of generality that $w_{\A}(d) = 1$.  If $w_{\A_2}(d) = 0$, then $G_{\g}(\A_2)$ is a Type III graph containing a cross-seam and $(\A,\n,\g)$ is not minimal by Lemma \ref{g2wave}.  If $w_{\A_2}(d) = 1$, let $\gamma_3 = \Delta_a$ and let $\g' = \{\g_2,\g_3\}$.  Then $\g'$ is related to $\g$ by a handle slide, so by the minimality of $(\A,\n,\g)$, we have that $\io(\A_2,\g) \leq \io(\A_2,\g')$ and thus $\io(\A_2,\g_1) \leq \io(\A_2,\g_3)$.  However, this implies that $w_{\A_2}(a) + w_{\A_2}(c) \leq w_{\A_2}(c) + 1$ and so $w_{\A_2}(a) = 1$ and $\io(\A_2,\g) = \io(\A_2,\g')$.  By the above arguments, $w_{\A_1}(a) > 0$, and so
\[ \io(\A_1,\g') = w_{\A_1}(a) + w_{\A_1}(c) < 2w_{\A_1}(a) + w_{\A_1}(c) = \io(\A_1,\g).\]
This implies $\calc(\A,\n,\g') < \calc(\A,\n,\g)$, as desired.
\end{proof}
\end{lemma}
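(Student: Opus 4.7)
The plan is to first reduce to the case where an edge weight one appears among the vertical edges $c, d$ rather than the horizontal edges $a_\pm$, and then handle the two sub-cases by either invoking an earlier lemma about Type III graphs with a cross-seam, or by executing a handle slide that strictly lowers the complexity.

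First I would rule out $w_\A(a)=1$. By Lemma \ref{reducing} the square graph $G_\g(\A)$ admits a horizontal wave inside the octagon $O'$, which forces both vertical boundary arcs of $O'$ to lie in $\A_1$. But $O'$ is an octagon whose four $\A$-boundary arcs alternate with four $\g$-boundary arcs, and the involution together with alternating adjacency forces $O'$ to contain a horizontal boundary arc in $\A_2$ as well as one in $\A_1$; each of these contributes an arc to the edge $a$, so $w_\A(a)\ge 2$. Hence the weight-one edge must be one of the vertical edges, and applying the homeomorphism $\psi$ if necessary we may assume $w_\A(d)=1$, where $d$ is the vertical edge joining $\g_2^+$ to $\g_2^-$.

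Now I would split on $w_{\A_2}(d)\in\{0,1\}$. If $w_{\A_2}(d)=0$, then in $G_\g(\A_2)$ the vertices $\g_2^\pm$ are incident only to the cross-seam $a$, making $G_\g(\A_2)$ a Type III graph that contains a cross-seam; Lemma \ref{g2wave} then gives non-minimality directly. If instead $w_{\A_2}(d)=1$, I would set $\g_3=\Delta_a$ and $\g'=\{\g_2,\g_3\}$, noting that this is a handle slide since $a$ is a wave in $\Sigma_\g(\A)$. A direct count in $G_\g(\A)$ yields
\begin{align*}
\io(\A_2,\g_1) &= w_{\A_2}(a)+w_{\A_2}(c),\\
\io(\A_2,\g_3) &= w_{\A_2}(c)+1.
\end{align*}
Minimality forces $\io(\A_2,\g)\le\io(\A_2,\g')$, whence $w_{\A_2}(a)\le 1$. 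If strict inequality holds we already have $\calc(\A,\n,\g')<\calc(\A,\n,\g)$, so we may assume $w_{\A_2}(a)=1$ and the first coordinates of the complexity agree. Then I would look at the second coordinate: since $w_\A(a)\ge 2$ from Step 1 and $w_{\A_2}(a)=1$, we get $w_{\A_1}(a)\ge 1$, and an analogous computation gives
\[
\io(\A_1,\g') = w_{\A_1}(a)+w_{\A_1}(c) \;<\; 2w_{\A_1}(a)+w_{\A_1}(c) = \io(\A_1,\g),
\]
yielding $\calc(\A,\n,\g')<\calc(\A,\n,\g)$, contradicting minimality.

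The main obstacle I expect is the first step, namely establishing $w_\A(a)\ge 2$. This requires combining the existence of a horizontal wave from the reducing-sequence machinery of Section \ref{sec:digress} with the alternating adjacency of $(\A,\g)$ supplied by Lemma \ref{alt}, to pin down the combinatorial structure of the octagons $O',O''$ in $\Sigma\setminus\nu(\A\cup\g)$. Once that structural fact is in hand, the handle-slide argument in the remaining case is essentially bookkeeping on edge weights.
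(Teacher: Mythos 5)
Your proposal follows the paper's argument essentially verbatim: the same use of the horizontal wave from Lemma \ref{reducing} and the octagon structure to establish $w_{\A}(a)\ge 2$, the same case split on $w_{\A_2}(d)\in\{0,1\}$ with the appeal to Lemma \ref{g2wave}, and the same handle slide $\g_3=\Delta_a$ with identical weight computations on both complexity coordinates. One sentence is misstated --- if the ``strict inequality'' $w_{\A_2}(a)<1$ held (i.e.\ $w_{\A_2}(a)=0$), the slide would \emph{increase} the first complexity coordinate rather than decrease it --- but the paper makes the same silent jump from $w_{\A_2}(a)\le 1$ to $w_{\A_2}(a)=1$ (both tacitly using that $w_{\A_2}(a)=0$ is impossible, e.g.\ by connectivity of $\A_2$ and Lemma \ref{missedcase}), so this does not distinguish your argument from theirs.
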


The next lemma uses our work from Section \ref{sec:digress} to determine how $\n_1$ meets $\A_1^+$ relative to $\g$ in $\Sigma_\A(\n,\g)$.

\begin{lemma}\label{exit}
For $i= 1,2$, the arc $\n_i$ intersects $\A_i^+$  between parallel arcs of $\gamma$  in the edge $a_+$.
\begin{proof}
Let $\A_* \subset \A_1$ denote the boundary arc of the octagon, say $O'$, which contains the intersection of $\n_1$ with $\A_1^-$, and observe by examination of $\Sigma_{\A}(\g)$ that in $O'$ the arc $\A_*$ is opposite an arc in $\A_2$.   See Figure \ref{fig:sec5_ExamplePair1}.  By Lemma \ref{hor}, the wave $\omega$ for $(\A,\g)$ is horizontal, and thus opposite pairs of vertical arcs in $\Sigma_{\g}(\A)$ contained in $\pd O'$ meet $\omega$ and are in the same curve $\A_1$.  This implies that $\A_*$ is a horizontal arc in one of $a_{\pm}$ in $G_{\g}(\A)$.  Now, we invoke Lemma \ref{alt}, which asserts that $(\A,\g)$ has alternating adjacency.  It follows that $\A_*$ also cobounds a rectangular component $R$ of $\Sigma \setminus \nu(\A \cup \g)$, and is opposite an arc of $\A_2$ in $R$.  Since the only such rectangles in $\Sigma_{\A}(\g)$ lie between parallel arcs in the edge $a_+$, the desired statement follows.  A parallel argument shows that the same is true for $\n_2$ and $\A_2^+$.
\end{proof}
\end{lemma}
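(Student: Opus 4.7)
My plan is to locate the $\n_1$-seam in $\Sigma_{\A}$ by tracking the $\A_1$-subarc it meets along the boundary at $\A_1^-$, and then cross-referencing the two pictures $\Sigma_{\A}(\g)$ and $\Sigma_{\g}(\A)$ to pin down the location of the other endpoint at $\A_1^+$. The argument for $\n_2$ will be symmetric, so I will focus on $i=1$. Since $\io(\A_1,\n_1)=1$, the arc $\n_1 \cap \Sigma_{\A}$ is a single seam with one endpoint on $\A_1^+$ and one on $\A_1^-$, and by the standing setup this latter endpoint lies on $\partial O'$ for one of the two octagons of $\Sigma \setminus \nu(\A \cup \g)$. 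Let $\A_*$ denote the $\A_1$-subarc of $\partial O'$ containing this endpoint.

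The first substantive step is to verify, by direct inspection of the Type III picture of $G_\A(\g)$ with $w_\g(b) = 0$, that each octagon carries three $\A_1$-arcs and one $\A_2$-arc on its boundary, arranged so that the sole $\A_2$-arc sits opposite the $\A_1$-arc incident to $\A_1^-$; thus $\A_*$ is opposite an arc of $\A_2$ across $O'$. Next I would switch to the viewpoint $\Sigma_{\g}(\A)$, where the same region $O'$ appears as one of the two octagons of the square graph $G_\g(\A)$. By Lemma \ref{hor} the wave $\omega$ produced by the reducing sequence is horizontal, so its two endpoints on $\partial O'$ lie on the two vertical $\A$-arcs, which therefore both belong to $\A_1$. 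Consequently the remaining horizontal pair of $\A$-arcs on $\partial O'$ must be one in $\A_1$ and one in $\A_2$, and combining with the previous observation, $\A_*$ is precisely the horizontal $\A_1$-arc; in particular $\A_*$ lies in one of the edges $a_{\pm}$ of $G_\g(\A)$.

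Now I would invoke Lemma \ref{alt}: $(\A,\g)$ has alternating adjacency, so $\A_*$ is adjacent, across a rectangular region $R$ of $\Sigma \setminus \nu(\A \cup \g)$, to an arc of $\A_2$. Translating this back into $\Sigma_\A(\g)$, the rectangles of $\Sigma_\A(\g)$ whose two $\A$-sides lie in different curves are exactly the rectangles sandwiched between consecutive parallel $\g$-arcs in the edge $a_+$ (and its hyperelliptic image $a_-$), because these are the only edges of the Type III graph $G_{\A}(\g)$ with $w_\g(b)=0$ that join an $\A_1$-vertex to an $\A_2$-vertex. Following the $\n_1$-seam through $R$ then shows that its other endpoint lands at $\A_1^+$ between two adjacent parallel $\g$-arcs in the edge $a_+$, as required.

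I expect the main obstacle to be the careful bookkeeping in the cross-identification of $O'$ and $R$ from the two viewpoints $\Sigma_\A(\g)$ and $\Sigma_\g(\A)$, since the same region of $\Sigma \setminus \nu(\A \cup \g)$ must be simultaneously read through two different cut-open pictures. Most of the conceptual infrastructure for this translation — especially the horizontality of $\omega$ and the alternating adjacency of $(\A,\g)$ — was set up explicitly in Section \ref{sec:digress}, so the final step should reduce to a local combinatorial check at the octagon.
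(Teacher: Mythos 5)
Your proposal is correct and follows the paper's proof essentially step for step: identify the arc $\A_*$ of $\A_1$ carrying the $\n_1$-endpoint at $\A_1^-$, observe from $\Sigma_\A(\g)$ that it is opposite the lone $\A_2$-arc of the octagon, use Lemma \ref{hor} to force the vertical arcs of $\partial O'$ into $\A_1$ so that $\A_*$ is horizontal, and then apply the alternating adjacency of Lemma \ref{alt} to place the opposite side of $\A_*$ in a rectangle between parallel arcs of the edge $a_+$. The only difference is cosmetic: you spell out slightly more explicitly why the mixed-curve rectangles can only occur along $a_\pm$, which the paper leaves implicit.
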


\begin{figure}[h!]
\centering
\includegraphics[scale = .7]{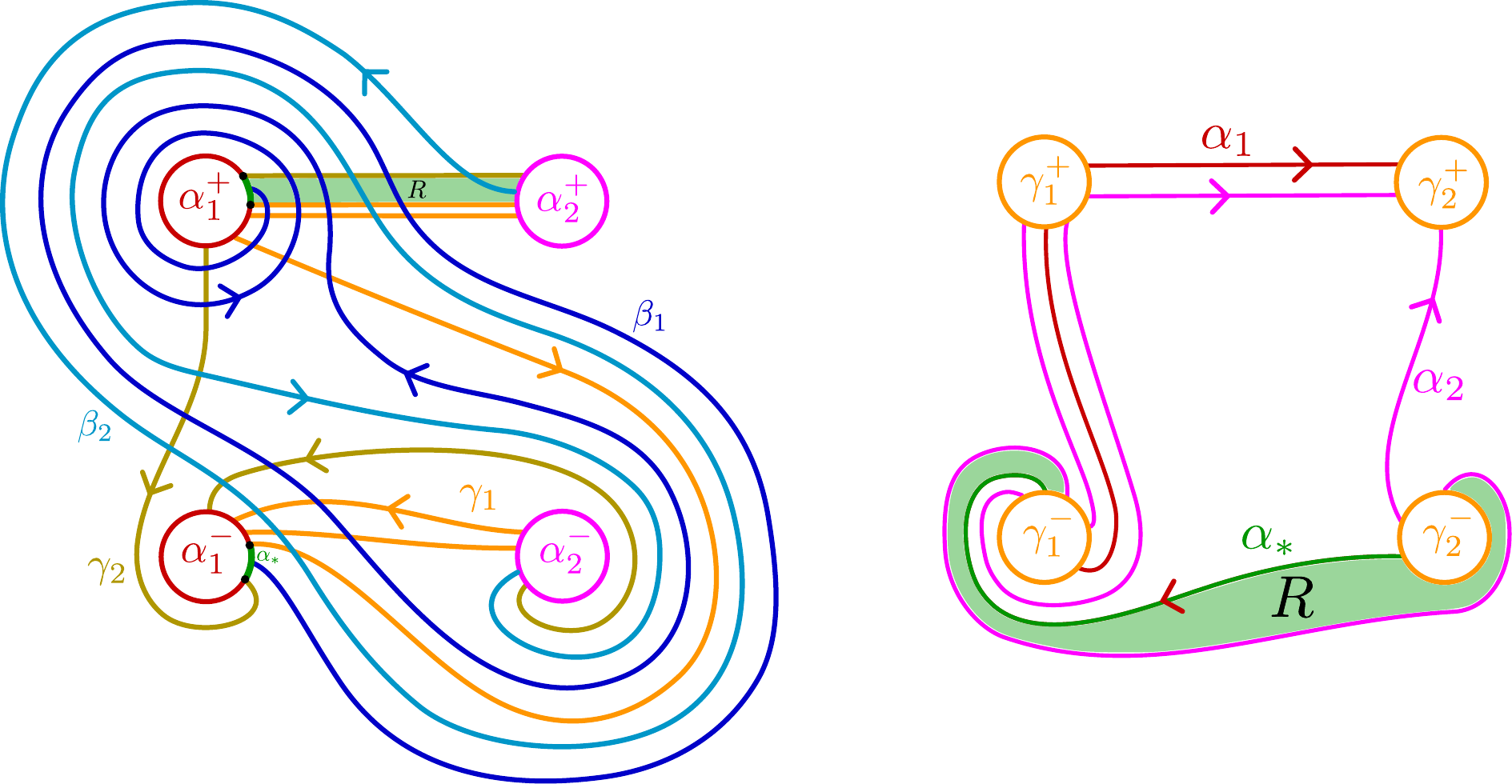}
\put(-290,-20){\large $\Sigma_\A(\n,\g)$}
\put(-80,-20){\large $\Sigma_\g(\A)$}
\caption{An example of $\Sigma_\A(\n,\g)$, along with the corresponding $\Sigma_\g(\A)$.  At left, $W_1=\frac{13}{5}$, $W_2=\frac{1}{3}$, and the slope of the $\n$--curves is $-\frac{3}{4}$.  Notice that  $(\n,\g)$ is a positive Heegaard diagram, according with Lemma \ref{sameint}.  We have assumed that there are no inessential intersections at $\A_1^-$, which prescribes that $\beta_1$ leaves this vertex as shown.  The existence of the rectangle $R$ on the right dictates that $\beta_1$ intersects $\A_1^+$ as shown, as in Lemma \ref{exit}.}
\label{fig:sec5_ExamplePair1}
\end{figure}

We note that if the weight of the edge $a_+$ in $\Sigma_{\g}(\A)$ is large, then $\n_1$ and $\n_2$ will not necessarily intersect $\A_1^+$ and $\A_2^+$ between the \emph{same} set of parallel arcs in the edge $a_+$ in $\Sigma_{\A}(\g)$ as in Figure \ref{fig:sec5_ExamplePair1}.  We will also keep track of how $\n$ and $\g$ intersect near the vertices $\A_i^+$, but we require the following lemma first.

\begin{lemma}\label{esssame}
All essential points of $\n \cap \g$ are coherently oriented.
\begin{proof}
Consider $\Sigma_{\A}(\n,\g)$.  Choose an orientation for $\g$ and note that since $(\A,\g)$ is a positive diagram, each arc corresponding to a single edge in $G_{\A}(\g)$ has the same orientation.  Thus, we may orient the edges of $G_{\A}(\g)$ so that oriented arcs originate in $\A_1^+$ and $\A_2^-$ and terminate in $\A_1^-$ and $\A_2^+$.  Now, we may orient $\n_1$ and $\n_2$ so that all essential points of intersection of $\n$ and the edge $a_+$ are coherently oriented.  The involution $J$ preserves $\n_1$ and $\n_2$ setwise while reversing their orientations; hence essential points of $\n \cap a_+$ and $\n \cap a_-$ are coherently oriented.  Since the slope $\frac{m}{n}$ of $\n$ is negative, it follows that essential points of $\n \cap c$ and $\n \cap a_-$ are coherently oriented as well.  Finally, the homeomorphism $\psi$ of $G_{\A}(\n,\g)$ swaps $\g$-arcs in the edge $c$ with those in the edge $d$ while preserving their orientations and maps $\n$-arcs of slope $-\frac{n+1}{n}$ to arcs of slope $-\frac{n+1}{n+2}$.  Thus, by applying $\psi$ to $G_{\A}(\n,\g)$, we may make a parallel argument that all essential points of $\n \cap d$ and $\n \cap a_-$ are coherently oriented.  We conclude that all essential intersections of $\n$ and $\g$ in $G_{\A}(\n,\g)$ are coherently oriented.  See Figure \ref{fig:sec5_ExamplePair1} and Figure \ref{fig:sec5_FirstWave}(a).
\end{proof}
\end{lemma}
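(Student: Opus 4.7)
The plan is to establish coherence of orientation on essential intersections of $\n \cap \g$ edge-by-edge in $G_{\A}(\g)$, anchoring the argument at the edge $a_+$ and propagating via the involutions $J$ and $\psi$ together with the slope hypothesis on $\n$. First, I would fix an orientation on each component of $\g$. Because $(\A,\g)$ is a positive Heegaard diagram, all $\g$-arcs corresponding to a single edge of $G_{\A}(\g)$ inherit parallel orientations in $\Sigma_{\A}$; one may arrange that oriented $\g$-arcs exit $\A_1^+$ and $\A_2^-$ and enter $\A_1^-$ and $\A_2^+$.

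Next, choose orientations on $\n_1$ and $\n_2$ so that the essential intersections of $\n$ with the edge $a_+$ are coherently oriented. This is possible because, once $\g$ is oriented, the sign of each essential intersection point along $a_+$ is determined by a single local crossing, so reversing the orientation of $\n_i$ flips all these signs simultaneously. The hyperelliptic involution $J$ preserves $\n_1$, $\n_2$, $\g_1$, $\g_2$ setwise while reversing all orientations and exchanging $a_+$ with $a_-$. Since reversing the orientations of both transverse strands preserves each intersection sign, coherence on $a_+$ transports to coherence on $a_-$.

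To move coherence onto the edge $c$, I would exploit the slope hypothesis $\frac{m}{n} = -\frac{n\pm 1}{n}$. The negative slope forces each $\n$-arc that crosses $a_-$ to continue, without reversing local direction, into a rectangular region of $\Sigma \setminus \nu(\A \cup \g)$ adjacent to the edge $c$, entering $c$ on the same side from which it exited $a_-$. Combined with the consistent orientation of $\g$-arcs across these two edges, this matches the signs of essential intersections on $c$ with those on $a_-$. Finally, the orientation-reversing homeomorphism $\psi$ fixes $a_\pm$ and $b_\pm$, swaps $c$ and $d$, and sends $\n$-arcs of slope $-\tfrac{n+1}{n}$ to arcs of slope $-\tfrac{n+1}{n+2}$ (another negative slope of the same qualitative form via $M_\psi$), so applying the preceding paragraph to the $\psi$-image yields coherence of $\n \cap d$ with $\n \cap a_-$, completing the proof.

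The main delicacy I anticipate is the middle step: one must verify that the rectangular complementary regions between the edges $a_-$ and $c$ genuinely force the local intersection signs to agree, which requires inspecting the cyclic order of $\A$- and $\g$-arcs around the vertices $\A_1^-$ and $\A_2^+$. Fortunately, the alternating adjacency established in Lemma~\ref{alt}, together with Lemma~\ref{exit} and the positivity of $(\A,\g)$, pins down this cyclic ordering unambiguously, so the geometric verification should be routine.
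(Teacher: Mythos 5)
Your proposal is correct and follows essentially the same route as the paper's proof: anchor coherence at the edge $a_+$ by choosing orientations on $\n_1$ and $\n_2$, transport it to $a_-$ via the hyperelliptic involution $J$, to $c$ via the negativity of the slope $\frac{m}{n}$, and to $d$ by applying $\psi$. The extra care you flag about cyclic orderings near the vertices is handled in the paper by the same standing assumptions (positivity of $(\A,\g)$ and the structure of the complementary octagons/rectangles), so no genuinely new ingredient is needed.
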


Now, we define the \emph{winding of $\n_j$ relative to $\g$ at $\A_i$}, denoted $W_i(\n_j)$ (or simply $W_i$ when the relevant $\n$--curve is clear), to be the number of inessential intersections (counted with sign) of $\n$ and $\g$ that are vertices of triangles based in $\A_i^+$ divided by $\io(\A_i,\g)$ for $i=1,2$.  An inessential intersection is positive if it is coherently oriented with the essential points of intersection and negative otherwise.  In the next four lemmas, we place restrictions on the values of $W_1$ and $W_2$ by showing that certain combinations of windings imply that $(\A,\n,\g)$ is not minimal or $(\n,\g)$ does not contain a wave based in $\n$, which contradicts our prior assumptions.

\begin{lemma}\label{noninteger}
If $W_1$ or $W_2$ is an integer, then $(\A,\n,\g)$ is not minimal.
\begin{proof}
If the winding of $\n$ relative to $\g$ at $\A_1$ is an integer, then the two octagonal regions of $\Sigma \setminus \nu(\A \cup \g)$ are separated by a single arc contained in $\A_1$.  It follows that in the square graph $G_{\g}(\A)$, one of the edges has weight one and thus  $(\A,\n,\g)$ is not minimal by Lemma \ref{weightone}.
\end{proof}
\end{lemma}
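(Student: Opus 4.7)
The plan is to translate the hypothesis that $W_1 \in \Z$ (the argument for $W_2$ is symmetric) into the geometric statement that the two octagonal regions $O'$ and $O''$ of $\Sigma \setminus \nu(\A \cup \g)$ are separated by a single arc contained in $\A_1$, and then invoke Lemma \ref{weightone}.

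First, I would set up the local picture at the vertex $\A_1^+$. On the boundary circle of its punctured neighborhood, the endpoints of $\g$-arcs and of $\n_1$ appear in some cyclic order, and by Lemma \ref{exit} the $\n_1$-endpoint lies between two consecutive parallel $\g$-arcs in the edge $a_+$. After our isotopy normalization (all inessential intersections based at $\A_i^+$), the inessential intersections of $\n_1$ with $\g$ at triangles based in $\A_1^+$ correspond precisely to those $\g$-arcs whose endpoints at $\A_1^+$ and at $\A_1^-$ lie on opposite sides of the corresponding endpoints of $\n_1$. The condition $W_1 \in \Z$ then says that the signed total of these contributions is an integer multiple of $\io(\A_1,\g)$; appealing to Lemma \ref{esssame} and the positivity of $(\A,\g)$ to control signs, this integrality forces every $\g$-arc at $\A_1^+$ to contribute the same signed amount, so that the cyclic position of $\n_1$ at $\A_1^-$ is rigidly aligned with its position at $\A_1^+$.

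With this alignment in hand, I would then read off the consequence for the decomposition $\Sigma \setminus \nu(\A \cup \g)$. The octagon adjacent to $\n_1 \cap \A_1^-$ and the octagon reached by crossing $\A_1$ at $\n_1 \cap \A_1^+$ must be separated across $\A_1$ by a single arc: any additional separating arc would introduce an extra $\g$-endpoint whose contribution is inconsistent with the uniform signed contribution dictated by integer winding. A single separating arc of $\A_1$ means that one of the horizontal edges $a_{\pm}$ of the square graph $G_{\g}(\A)$ has weight one, and Lemma \ref{weightone} then yields that $(\A,\n,\g)$ is not minimal.

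The main obstacle I anticipate is the middle step: rigorously establishing the rigid cyclic alignment between the $\A_1^+$-side and the $\A_1^-$-side of $\n_1$ from the arithmetic condition $W_1 \in \Z$, correctly tracking the signs of inessential intersections, and ruling out more subtle configurations (e.g., octagons separated by multiple arcs whose signed contributions cancel). The positivity of $(\A,\g)$ and the fact that essential intersections of $\n$ with $\g$ are coherently oriented should ensure that partial cancellations cannot occur, so that integrality really does imply uniform per-arc contributions.
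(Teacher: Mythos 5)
Your skeleton is exactly the paper's: the published proof consists precisely of the assertion that integer winding forces the two octagonal regions of $\Sigma\setminus\nu(\A\cup\g)$ to be separated by a single arc of $\A_1$, whence some edge of the square graph $G_{\g}(\A)$ has weight one and Lemma \ref{weightone} applies. So at the level of strategy there is nothing to fault, and the paper is no more detailed than you are about the key geometric step.

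That said, the justification you offer for that step contains a genuine confusion. Your first observation is fine: for a monotone spiral of $\n_1$ about the fat vertex $\A_1^+$, having a number of inessential crossings divisible by $\io(\A_1,\g)$ means each of the $\io(\A_1,\g)$ arc-germs at $\A_1^+$ is crossed the same number of times, so $\n_1$ departs essentially from the same sector in which it arrives. But the deduction in your third paragraph does not work as written: ``the octagon adjacent to $\n_1\cap\A_1^-$'' and ``the octagon reached by crossing $\A_1$ at $\n_1\cap\A_1^+$'' are the \emph{same} region. Indeed, $\io(\A_1,\n_1)=1$, so $\n_1\cap\A_1^+$ and $\n_1\cap\A_1^-$ are the two sides of a single point lying on one arc $\A_*$ of $\A_1\setminus\g$; by the normalization of inessential intersections and Lemma \ref{exit}, $\A_*$ has the octagon $O'$ on its minus side and a rectangle $R$ between parallel $a_+$--arcs on its plus side. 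Crossing $\A_1$ at $\n_1\cap\A_1^+$ therefore lands you back in $O'$, not in the second octagon $O''$, and $O''$ never genuinely enters your argument. What actually has to be shown is that the alignment forced by $W_1\in\Z$ pins down the identification of the fat vertices $\A_1^{\pm}$ so that one of the three ``gap'' intervals of $\A_1^+$ (those bordering an octagon rather than a rectangle) is glued to a gap interval of $\A_1^-$; that glued interval is the weight-one arc. Establishing this requires comparing the rigid pattern of \emph{essential} departures of $\n_1$ from $\A_1^+$ and from $\A_1^-$ (using the slopes and the involution $J$), which neither your alignment remark nor the ``extra $\g$-endpoint'' sentence supplies.
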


\begin{lemma}\label{sameint}
If $W_i < 0$ for either $i=1$ or $i=2$, then $G_\n(\g)$ does not contain a wave.
\begin{proof}
Consider $\Sigma_{\A}(\n,\g)$ and note that there is a subarc of a $\g$-arc contained in the edge $a_+$ with endpoints in essential intersections of $\n_1$ and $\n_2$; thus $G_{\n}(\g)$ contains an edge connecting $\n_1^+$ to $\n_2^+$.  For any arc of $\g$ originating from $\A_1^+$, its first essential intersection is with $\n_2$.  Suppose that $\n_1 \cap \g$ contains an inessential point of intersection based in $\A_1^+$ which is oriented opposite the essential points of intersection.  Then there is an arc of $\g$ connecting an inessential point of intersection with $\n_1$ with an oppositely oriented essential point of intersection with $\n_2$.  It follows that $G_{\n}(\g)$ contains an edge connecting $\n_1^+$ to $\n_2^-$, and by the argument of Lemma \ref{sat}, $G_{\n}(\g)$ does not contain a wave. 

A similar argument shows that if $\n_2 \cap \g$ has an oppositely oriented inessential intersection point based in $\A_2^+$, then $G_{\n}(\g)$ again does not contain a wave.
\end{proof}
\end{lemma}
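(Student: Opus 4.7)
The plan is to extract from the hypothesis $W_i < 0$ a configuration of $\g$--arcs that serves as a wave-busting arc or pair (in the sense of Lemma \ref{sat}) for the Heegaard diagram $(\n,\g)$, with the roles of the two cut systems reversed. Assume without loss of generality that $W_1 < 0$, so that $\n_1 \cap \g$ contains at least one inessential intersection based in $\A_1^+$ whose sign is opposite to that of the essential intersections (which are all coherently oriented by Lemma \ref{esssame}).

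First, I would work inside $\Sigma_\A(\n,\g)$, using Lemma \ref{exit} to fix the cyclic position of $\n_1 \cap \A_1^+$ and $\n_2 \cap \A_2^+$ between two adjacent $\g$--arcs in the edge $a_+$ of $G_\A(\g)$. Because the slope of $\n$ is $-\frac{n\pm 1}{n}$ and the $\g$--arcs in $a_+$ have slope $\frac{0}{1}$, consecutive essential intersections of a single $\g$--arc in $a_+$ alternate between $\n_1$ and $\n_2$ (by Lemma \ref{inter}). In particular, there is a subarc of such a $\g$--arc joining an essential point of $\n_1$ to an essential point of $\n_2$; by Lemma \ref{esssame} these two points have the same sign, so this subarc contributes an edge of $G_\n(\g)$ connecting $\n_1^+$ to $\n_2^+$.

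Next, I would follow the $\g$--arc through the oppositely oriented inessential intersection at $\A_1^+$. Since the neighboring essential intersections along this $\g$--arc lie on $\n_2$ and are coherently oriented, the segment of $\g$ between the inessential point (on $\n_1$, with negative sign) and the next essential point (on $\n_2$, with positive sign) produces an edge of $G_\n(\g)$ joining $\n_1^+$ to $\n_2^-$. Together with the previously identified edge, this means $\g$ contains an arc meeting $\n_1$ at $\n_1^+$ only in oppositely oriented endpoints and $\n_2$ in an interior point, i.e. a wave-busting arc for $(\n,\g)$. Lemma \ref{sat}, with the roles of $\A$ and $\n$ exchanged, then forbids a wave in $G_\n(\g)$. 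The case $W_2 < 0$ is handled by a parallel argument using $\A_2^+$.

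The main obstacle will be the careful accounting of signs and cyclic orderings: one has to verify that the two $\g$--arcs identified above really do produce \emph{distinct} edges in the reduced graph $G_\n(\g)$, rather than being collapsed together, and that traversing $\g$ from an inessential point at $\A_1^+$ indeed reaches an essential point of $\n_2$ (as opposed to running immediately to another inessential point of $\n_1$). Both of these reduce to the fact, inherited from Lemma \ref{exit} and the slope calculation of Lemma \ref{inter}, that essential intersections of a single $\g$--arc with $\n$ strictly alternate between $\n_1$ and $\n_2$ in the edge $a_+$; once this is in hand, the wave-busting conclusion is immediate.
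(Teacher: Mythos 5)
Your proposal is correct and follows essentially the same route as the paper: use the coherent orientation of essential intersections (Lemma \ref{esssame}) to find an edge of $G_\n(\g)$ from $\n_1^+$ to $\n_2^+$ coming from a $\g$--arc in $a_+$, then use the oppositely oriented inessential intersection forced by $W_1<0$ to produce an edge from $\n_1^+$ to $\n_2^-$, and conclude via the valence/symmetry argument of Lemma \ref{sat}. The only cosmetic difference is that you package the two edges as a literal wave-busting arc, whereas the paper simply invokes ``the argument of Lemma \ref{sat}'' directly from the existence of the two cross-seams with opposite target signs, which sidesteps your worry about whether the $\g$--subarc meets $\n_2$ exactly once in its interior.
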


It follows that $W_i > 0$ for $i=1,2$; hence, $(\n,\g)$ is a positive diagram, and $G_{\g}(\n)$ is a square graph.

\begin{lemma}\label{nonewind}
If $0 < W_i < 1$ for both $i = 1,2$, then $G_{\n}(\g)$ does not contain a wave.
\begin{proof}
If $0 < W_1 < 1$, then there is an arc $\g'$ in $\gamma$ whose endpoints lie in $\A_1^+$ and $\n_2$.  See Figure \ref{fig:sec5_NoWinding}(a).  By assumption, there are no inessential intersections of $\n$ and $\g$ based at $\A_1^-$, and so every arc of $\gamma$ with an endpoint on $\A_1^-$ meets $\n_2$.  Thus, the endpoint of $\g'$ in $\A_1^+$ is the endpoint of another arc $\g''$ connecting $\A_1^-$ to $\n_2$, and in the graph $G_{\n}(\g)$, the arc $\g' \cup \g''$ is in an edge connecting $\n_2^+$ to $\n_2^-$. 

A parallel argument shows that if $0 < W_2< 1$, then $G_{\n}(\g)$ contains an arc in an edge connecting $\n_1^+$ to $\n_1^-$.  Together, these two arcs consitute a wave-busting pair, and by Lemma \ref{sat}, we have that $G_{\n}(\g)$ does not contain a wave.
\end{proof}
\end{lemma}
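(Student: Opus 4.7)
The plan is to produce, under the hypothesis $0<W_i<1$ for $i=1,2$, a wave-busting pair in $\g$ for the Heegaard diagram $(\n,\g)$, and then invoke (the version of) Lemma \ref{sat} applied to $(\n,\g)$ to conclude that $G_\n(\g)$ contains no wave. Concretely, I want to exhibit two arcs of $\gamma$, sitting in $\Sigma_\n$, one with both endpoints on $\n_2$ (coherently oriented) avoiding $\n_1$, and the symmetric one with both endpoints on $\n_1$ avoiding $\n_2$.

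For the first arc, I use the condition $0<W_1<1$. By Lemma \ref{noninteger} we are already free of integer winding, and positivity of $W_1$ (established in the discussion after Lemma \ref{sameint}) makes $(\n,\g)$ a positive diagram in a neighborhood of $\A_1^+$. Because $W_1<1$, not every $\g$-arc incident to $\A_1^+$ is forced to wind; thus there is at least one $\g$-arc $\g'$ leaving $\A_1^+$ whose first (and only, before it re-enters a fat vertex) intersection with $\n$ is a point on $\n_2$. Now I use the isotopy assumption that there are no inessential intersections of $\n$ and $\g$ based at $\A_1^-$: the $\g$-arc $\g''$ emanating from $\A_1^-$ that is paired with $\g'$ across $\A_1$ also meets $\n_2$ as its first intersection. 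Gluing along $\A_1$, the concatenation $\g'\cup\g''$ is a sub-arc of $\gamma$ in $\Sigma_\n$ whose two endpoints lie on $\n_2$ and whose interior misses $\n$ entirely. The positivity of $W_1$ together with Lemma \ref{esssame} shows that both endpoints are coherently oriented, so in $G_\n(\g)$ this arc lies in an edge from $\n_2^+$ to $\n_2^-$.

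The symmetric argument applied at $\A_2^+$, using $0<W_2<1$ and the analogous isotopy condition at $\A_2^-$, produces an arc of $\gamma$ in $\Sigma_\n$ with both endpoints on $\n_1$ (coherently oriented) and interior disjoint from $\n_2$; this arc lies in an edge of $G_\n(\g)$ from $\n_1^+$ to $\n_1^-$. Together these two arcs form a wave-busting pair in $\gamma$ for the diagram $(\n,\g)$. Hence by Lemma \ref{sat}, $G_\n(\g)$ contains no wave.

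The main delicate step is the orientation bookkeeping: verifying that the endpoints on $\n_i$ really are coherently oriented. This rests on Lemma \ref{esssame} (all essential points of $\n\cap\gamma$ are coherently oriented) combined with the convention that $W_i>0$ counts inessential intersections with the same sign as the essential ones. The other point that needs care is confirming that the arc $\g'$ produced by $W_1<1$ genuinely avoids $\n$ in its interior, which uses Lemma \ref{exit} to locate where $\n_1$ and $\n_2$ pass through the vertices $\A_i^+$ relative to the $\gamma$-arcs in the edge $a_+$, together with the explicit slope condition $m/n=-(n\pm 1)/n$ that controls the geometry near the $\A_i^+$ windings.
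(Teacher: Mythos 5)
Your proposal is correct and follows essentially the same route as the paper: use $0<W_1<1$ to find a $\gamma$-arc from $\A_1^+$ whose first intersection is with $\n_2$, use the convention that no inessential intersections are based at $\A_1^-$ to extend it through $\A_1$ to a second endpoint on $\n_2$, repeat symmetrically at $\A_2$, and conclude via the wave-busting pair and Lemma \ref{sat}. The extra orientation bookkeeping you supply (via Lemma \ref{esssame} and positivity of the $W_i$) is exactly what the paper's phrase ``in an edge connecting $\n_2^+$ to $\n_2^-$'' encodes.
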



\begin{lemma}\label{bothwind}
If $W_i > 1$ for both $i = 1,2$, then $G_{\n}(\g)$ does not contain a wave.
\begin{proof}
If $W_1 > 1$, then there is an arc of $\g$ emanating from $\A_1^+$ that meets $\n$ in two consecutive inessential points of $\n_1 \cap \g$ which are coherently oriented.  See Figure \ref{fig:sec5_NoWinding}(b).  Such an arc contributes an edge connecting $\n_1^+$ to $\n_1^-$ in $G_{\n}(\g)$.  Similarly, if $W_2 > 1$, then $G_{\n}(\g)$ contains an edge connecting $\n_2^+$ and $\n_2^-$, and thus $G_{\n}(\g)$ contains a wave-busting pair.  We conclude that $G_{\n}(\g)$ does not contain a wave by Lemma \ref{sat}.
\end{proof}
\end{lemma}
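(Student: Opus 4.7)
The plan is to mimic closely the strategy of Lemmas \ref{sameint} and \ref{nonewind}: produce a wave-busting pair in $G_{\n}(\g)$, one self-edge at $\n_1$ and one at $\n_2$, and then invoke Lemma \ref{sat}.

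First I would unpack the definition of winding.  Since $W_i$ counts signed inessential intersections of $\n$ and $\g$ based in $\A_i^+$ divided by $\io(\A_i,\g)$, and since we have already established (Lemma \ref{sameint}) that $W_i > 0$ so all inessential intersections at $\A_i^+$ are coherently oriented with the essential ones, the condition $W_1 > 1$ says that the total number of inessential intersections of $\n_1 \cap \g$ based at $\A_1^+$ strictly exceeds $\io(\A_1, \g)$.  Each arc of $\g$ meeting $\A_1^+$ can contribute at most one inessential intersection with $\n_1$ while on its way into the essential part of the diagram, \emph{unless} it doubles back and produces two consecutive intersections with $\n_1$ near $\A_1^+$.

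By a pigeonhole argument, then, $W_1 > 1$ forces the existence of an arc of $\g$ emanating from $\A_1^+$ that meets $\n_1$ in two consecutive inessential points, both coherently oriented by Lemma \ref{esssame}. The subarc of this $\g$-arc between these two intersections therefore corresponds to an edge in $G_{\n}(\g)$ connecting $\n_1^+$ to $\n_1^-$, whose two endpoints are coherently oriented at both ends, i.e., a seam that contributes the kind of self-edge described in the definition of a wave-busting pair.  See the situation illustrated for this kind of doubling-back configuration in Figure \ref{fig:sec5_NoWinding}(b).

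A symmetric argument using $W_2 > 1$ produces an arc of $\g$ connecting $\n_2^+$ to $\n_2^-$ whose endpoints are coherently oriented with each other.  These two arcs together form a wave-busting pair $(c,d)$ in the sense of the definition preceding Lemma \ref{sat}, and so by that lemma $G_{\n}(\g)$ contains no wave based in $\n$.  The main obstacle will be carefully justifying the pigeonhole step — i.e., explaining why an excess of coherently oriented inessential intersections near $\A_1^+$ must arise from a single $\g$-arc meeting $\n_1$ twice near $\A_1^+$ (rather than being distributed one per $\g$-arc), using that no inessential intersections occur at $\A_i^-$ — but this follows from the normalization at the start of the section and the fact that windings are counted only at the positive vertex.
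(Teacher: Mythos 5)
Your proposal is correct and follows essentially the same route as the paper: extract from $W_1>1$ a $\g$--arc meeting $\n_1$ in two consecutive, coherently oriented inessential points near $\A_1^+$ (giving an edge from $\n_1^+$ to $\n_1^-$ in $G_\n(\g)$), do the same at $\A_2^+$ using $W_2>1$, and conclude via the wave-busting pair and Lemma \ref{sat}. The only difference is that you make explicit the pigeonhole count behind the existence of the doubling-back arc, which the paper delegates to Figure \ref{fig:sec5_NoWinding}(b).
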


\begin{figure}[h!]
\centering
\includegraphics[scale = 1]{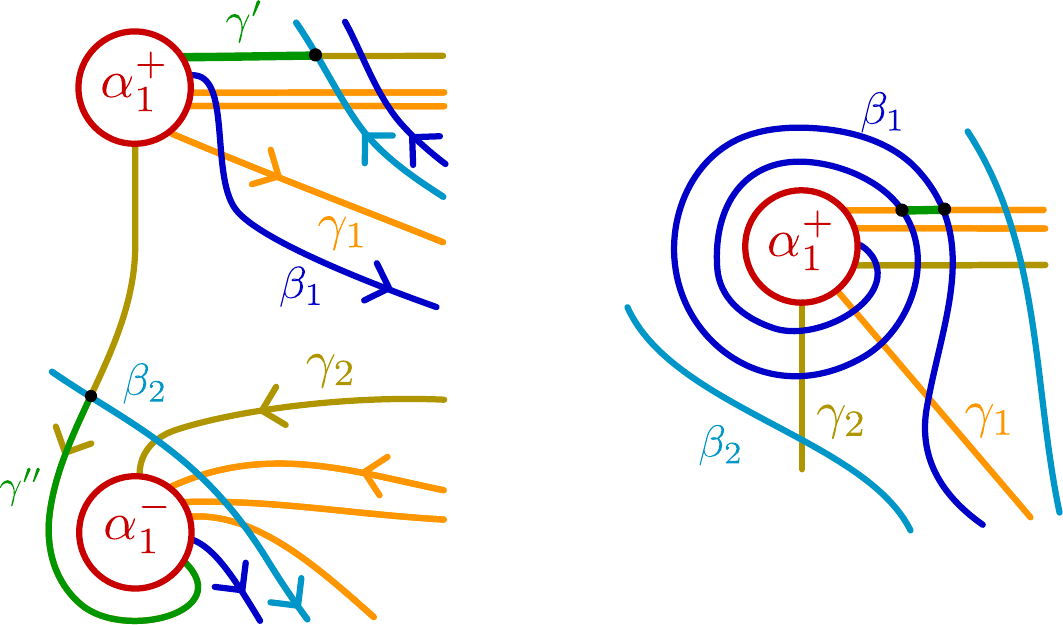}
\put(-240,-20){\large (a)}
\put(-60,-20){\large (b)}
\caption{The cases in which (a) $W_1<1$ and (b) $W_1>1$.}
\label{fig:sec5_NoWinding}
\end{figure}


Thus, we are left with two cases: $W_i > 1$ and $0 < W_j < 1$ for $\{i,j\} = \{1,2\}$.  Unfortunately, in both of these cases it is possible for $(\n,\g)$ to admit a wave based in $\n$, and so a more sophisticated argument is required -- especially since the result $\n'$ of wave surgery on $\n$ no longer satisfies $\io(\A,\n') = 2$.  Hence, we must appeal to a line of reasoning other than that involving minimal complexity $\calc(\A,\n,\g)$.  In the remaining two lemmas, we complete the proof of Proposition \ref{claim1} by using Lemma \ref{reducing} to determine that $(\n,\g)$ is not a Heegaard diagram for $S^3$.

\begin{lemma}\label{onewind}
If $W_1 > 1$ and $0<W_2 < 1$, then $(\n,\g)$ is not a Heegaard diagram for $S^3$.
\begin{proof}
First, we note that $(\n,\g)$ contains a wave $\omega$:  Since $W_1 > 1$, an octagonal component $O'$ of $\Sigma \setminus \nu(\A \cup \g)$ contains an arc $\n_*$ of $\n_1$, both of whose endpoints are inessential points of $\n \cap \g$, such that there is an arc $\omega$ which avoids $\n \cup \g$ in its interior, has one endpoint on $\n_*$, meets $\A_2^-$ in a single point contained in $O_*$, and continues through a rectangular component of $\Sigma \setminus \nu(\A \cup \g)$ to meet $\n_1$ in an oppositely oriented point.  See Figure \ref{fig:sec5_FirstWave}(a). 

\begin{figure}[h!]
\centering
\includegraphics[scale = .35]{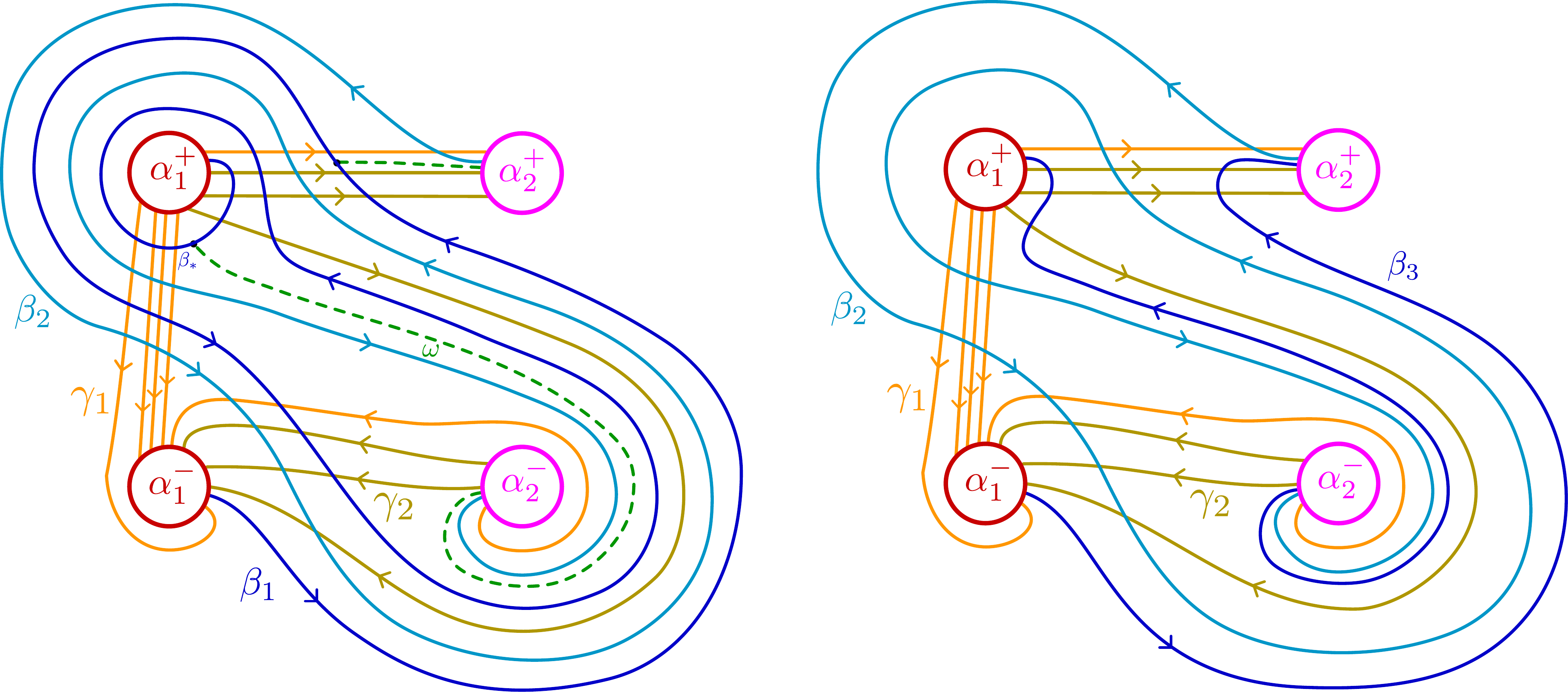}
\put(-330,-20){\large (a)}
\put(-100,-20){\large (b)}
\caption{(a) Before and (b) after the surgery along the $\n_1$--wave $\omega$ in $\Sigma\setminus\nu(\A\cup\g)$.}
\label{fig:sec5_FirstWave}
\end{figure}

As depicted in Figure \ref{fig:sec5_FirstWave}(b), we let $\n_3$ denote the new curve created by doing surgery on $\n_1$ along $\omega$.  The curve $\n_3$ intersects $G_{\A}(\n,\g)$ in two arcs, each having slope $-\frac{1}{1}$.  Further, $W_1(\n_3) = W_1(\n_1) - 1$ while $W_2(\n_3) = 1 - W_2(\n_2)$.  Let $\n' = \{\n_2,\n_3\}$.  We claim that $(\n',\g)$ does not contain a wave based in $\gamma$.  By Lemma \ref{weightone}, each edge $c$ and $d$ of $G_{\g}(\A)$ has weight at least two, which implies that $\Sigma \setminus \nu(\A \cup \g)$ contains two rectangles $R_1$ and $R_2$ such that $R_i$ has two opposite boundary components contained in $\gamma_i$.  See Figure \ref{fig:sec5_RectBust}.  Since the slope of $\n_2$ is $-\frac{n\pm1}{n}$, we have that $\n_2$ intersects each edge of $G_{\A}(\g)$ essentially, and thus $\n_2$ meets $R_1$ and $R_2$ in arcs $\n^1_*$ and $\n_2^*$.   In $G_{\g}(\n')$, the pair $\{\n^1_*,\n^2_*\}$ is a wave-busting pair, and thus $G_{\g}(\n')$ does not contain a wave by Lemma \ref{sat}.

\begin{figure}[h!]
\centering
\includegraphics[scale = .35]{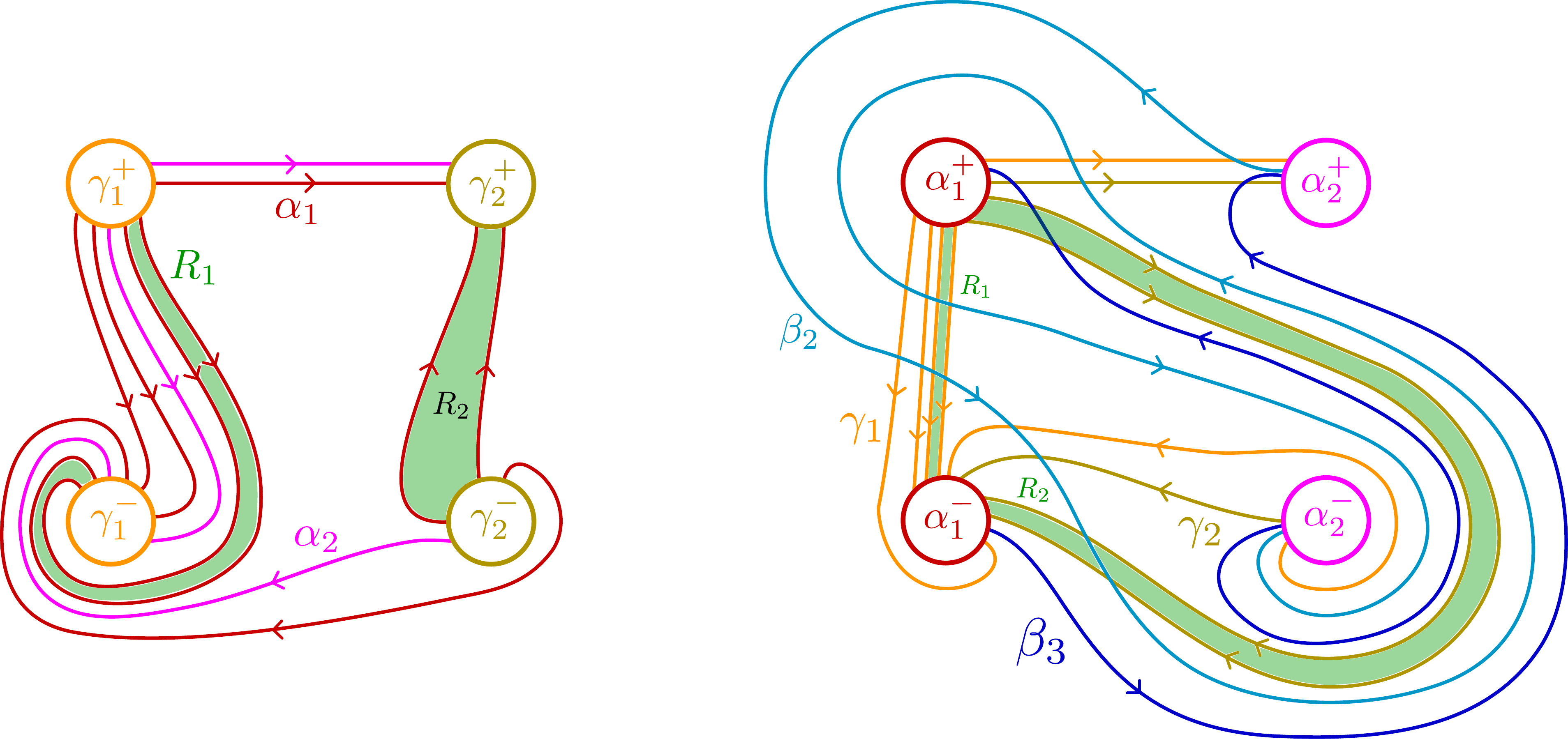}
\put(-130,-20){\large $\Sigma_\A(\n',\g)$}
\put(-330,-20){\large $\Sigma_\g(\A)$}
\caption{The fact that no edge in $G_\g(\A)$ has weight one gives rise to a pair of rectangles in $\Sigma_\g(\A)$ that in turn produce a wave-busting pair $(\n_*^1,\n_*^2)$ in $\Sigma_\A(\n',\g)$.  Each arc $\n_*^i$ comes from a component of $\n_2\cap R_i$.}
\label{fig:sec5_RectBust}
\end{figure}

Now, suppose that $W_1(\n_1) > 2$, so that $W_1(\n_3) > 1$.  In this case, as in the proof of Lemma \ref{bothwind}, there is an arc $\g_3^*$ of $\g$ which meets $\n_3$ in its coherently oriented endpoints and avoids $\n'$ in its interior.  In addition, $0 < W_2(\n_2) < 1$; hence there is an arc $\g_2^*$ contained in the edge $a_+$ which meets $\n_2$ in its coherently oriented endpoints and avoids $\n'$ in its interior.  See Figure \ref{fig:sec5_WindingBust}.  It follows that $\{\g_2^*,\g_3^*\}$ is a wave-busting pair and $G_{\n'}(\g)$ does not contain a wave by Lemma \ref{sat}.  In this case, we conclude that $(\n',\g)$, and therefore $(\n,\g)$, is not a Heegaard diagram for $S^3$ by Theorem \ref{wave}.

\begin{figure}[h!]
\centering
\includegraphics[scale = .5]{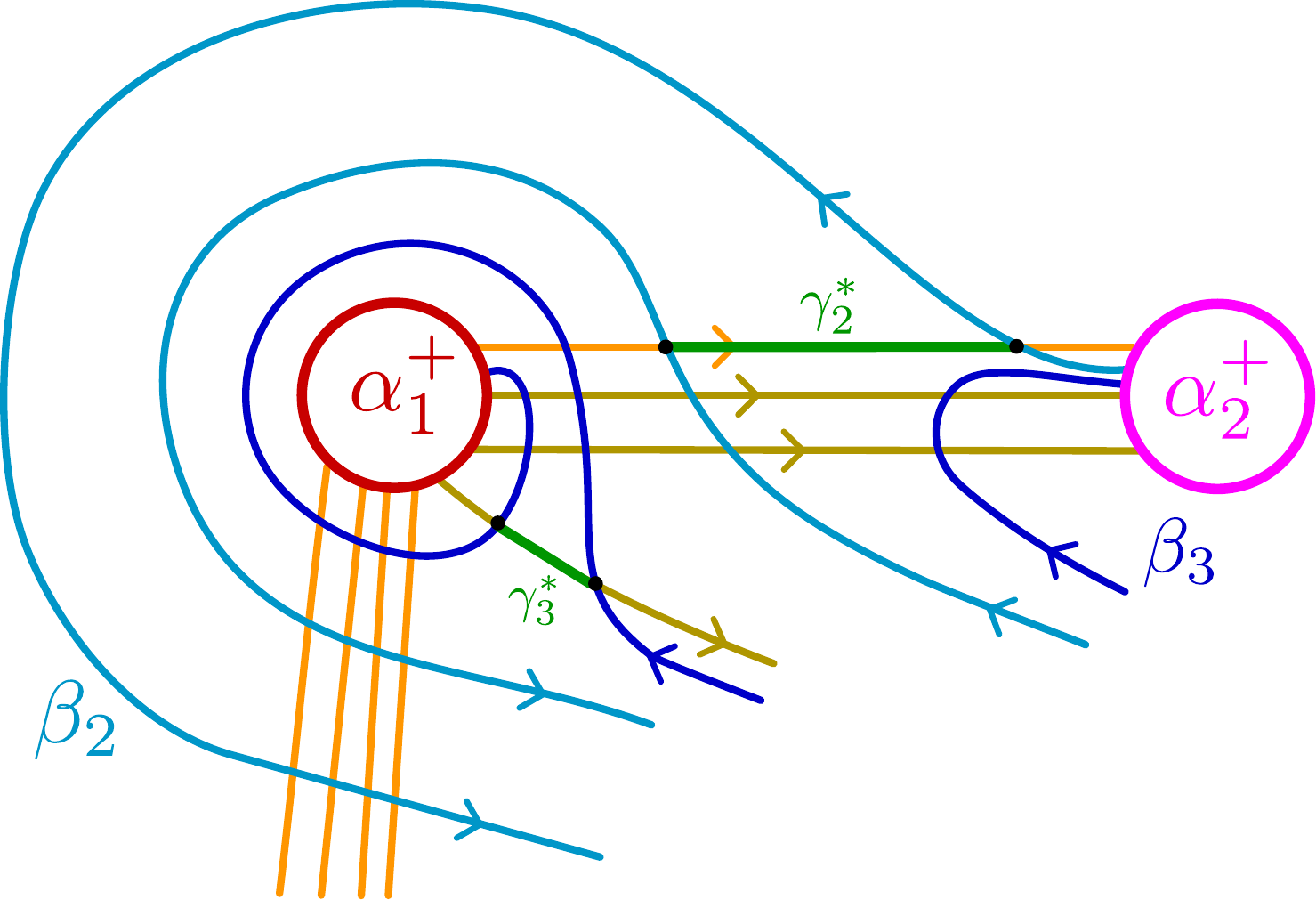}
\caption{A local view of the $\A_i^+$ in the case that $W_1(\n_3)>1$.  In this case, $\{\g_2^*,\g_3^*\}$ is a wave-busting pair of arcs.}
\label{fig:sec5_WindingBust}
\end{figure}

On the other hand, suppose that $1 < W_1(\n_1) < 2$, so that $0< W_1(\g_3) < 1$.  In this case the situation is considerably more complicated.  Suppose (by applying $\psi$ if necessary and noting that $\psi$ does not alter the values of $W_1$ and $W_2$) that the slope of $\n_2$ is $\frac{m}{n}=-\frac{|n|-1}{|n|}$.  By Lemma \ref{inter}, the curve $\n_2$ has exactly $k = \frac{|n|}{2}-1$ essential intersections with each arc in the edges $a_{\pm}$.  Consider the arc $\omega'$ connecting $\A_1^+$ to $\A_2^-$ in the octagon $O'$.  By the symmetry of $G_{\A}(\n,\g)$ under the involution $J$ and Lemma \ref{exit}, if we extend $\omega'$ through $\A_1$, it intersects $\A_1^-$ between parallel arcs corresponding to the edge $a_-$. See Figure \ref{fig:sec5_BigExample}. Thus, $\omega'$ may be extended from $\A_1^-$ and from $\A_2^+$ to an arc $\omega^*$ which avoids $\n_3$ and meets $\n_2$ in $2k$ points, where $k$ of these points arise from the $k$ essential intersections of $\n_2$ with $a_+$ and the other $k$ points arise from the intersections of $\n_2$ with $a_-$.  Moreover, each intersection point of the first type is naturally paired with an intersection point of the second type.  For this reason, we will label the points of intersection of $\omega^*$ with $\n_2$ as $p^+_1,\dots,p^+_{k},p^-_{k},\dots,p^-_1$. 

\begin{figure}[h!]
\centering
\includegraphics[scale = .45]{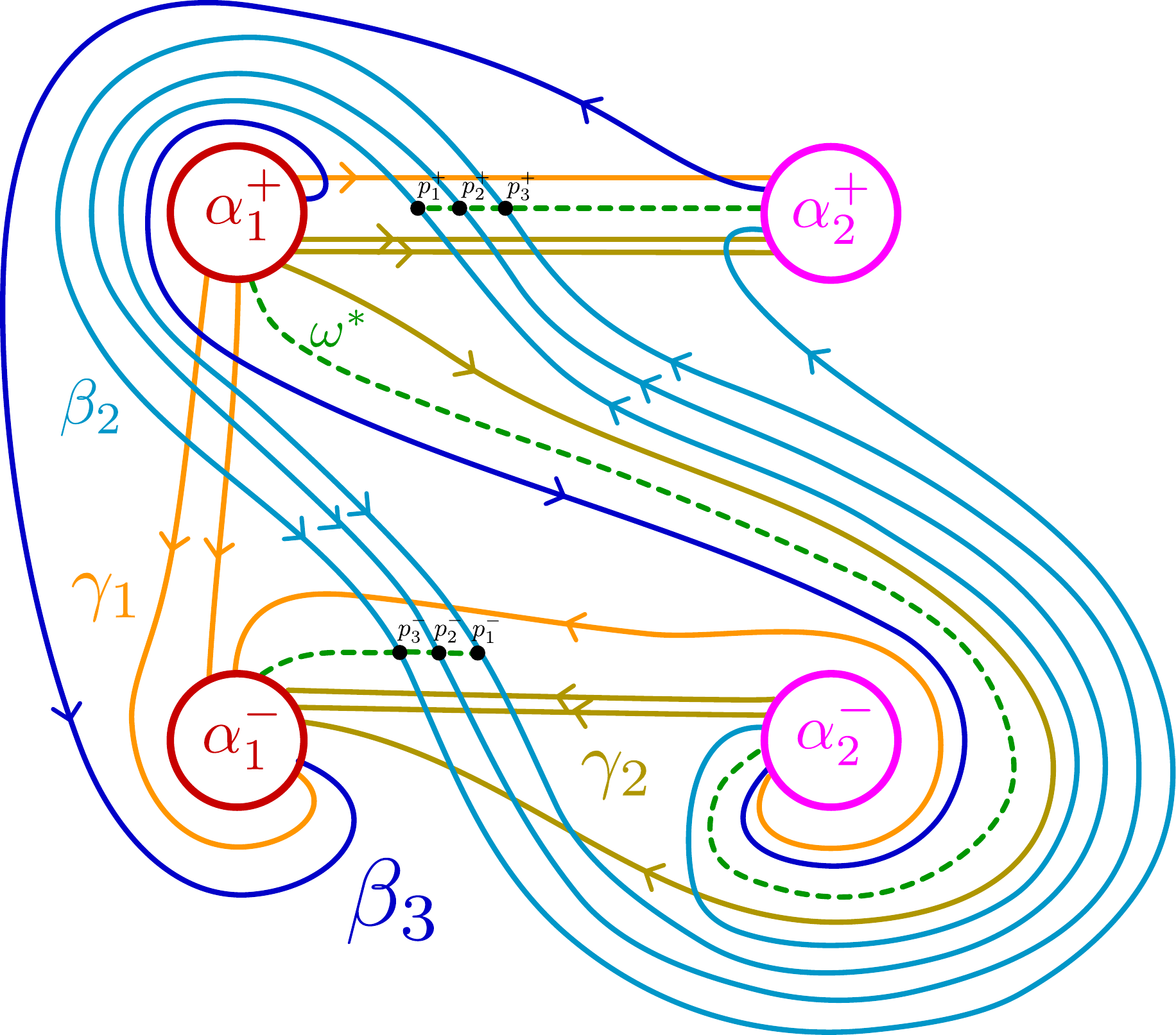}
\caption{An example of $\Sigma_\A(\n',\g)$ in the case that $W_1(\n_3)<1$, along with the nested wave sequence $\omega^*$. The slope of $\n_2$ is $-\frac{7}{8}$.}
\label{fig:sec5_BigExample}
\end{figure}

We call the arc $\omega^*$ a \emph{nested wave sequence}, because it gives rise to $k$ successive waves.  Let $\omega_i$ denote the subarc of $\omega^*$ connecting $p^+_i$ to $p^-_i$, so that $\omega^* = \omega_1$.  Let $\n^*_k = \n_2$ and let $\n'_k = \n' = \{\n^*_k,\n_3\}$.  Observe that $\omega_k$ is a wave for $(\n'_k,\g)$ based in $\n^*_k$, and let $\n'_{k - 1} = \{\n^*_{k-1},\n_3\}$ be the result of doing surgery on $\n^*_{k}$ along $\omega_{k}$.  Inductively, $\omega^*$ gives rise a to sequence of wave surgeries:  We will let $\n'_i = \{\n^*_i,\n_3\}$, where $\n^*_i$ is the result of doing surgery on $\n^*_{i+1}$ along the wave $\omega_{i+1}$ for the diagram $\n'_{i+1}$.  See Figure \ref{fig:sec5_WaveSequence}.

\begin{figure}[h!]
\centering
\includegraphics[scale = .38]{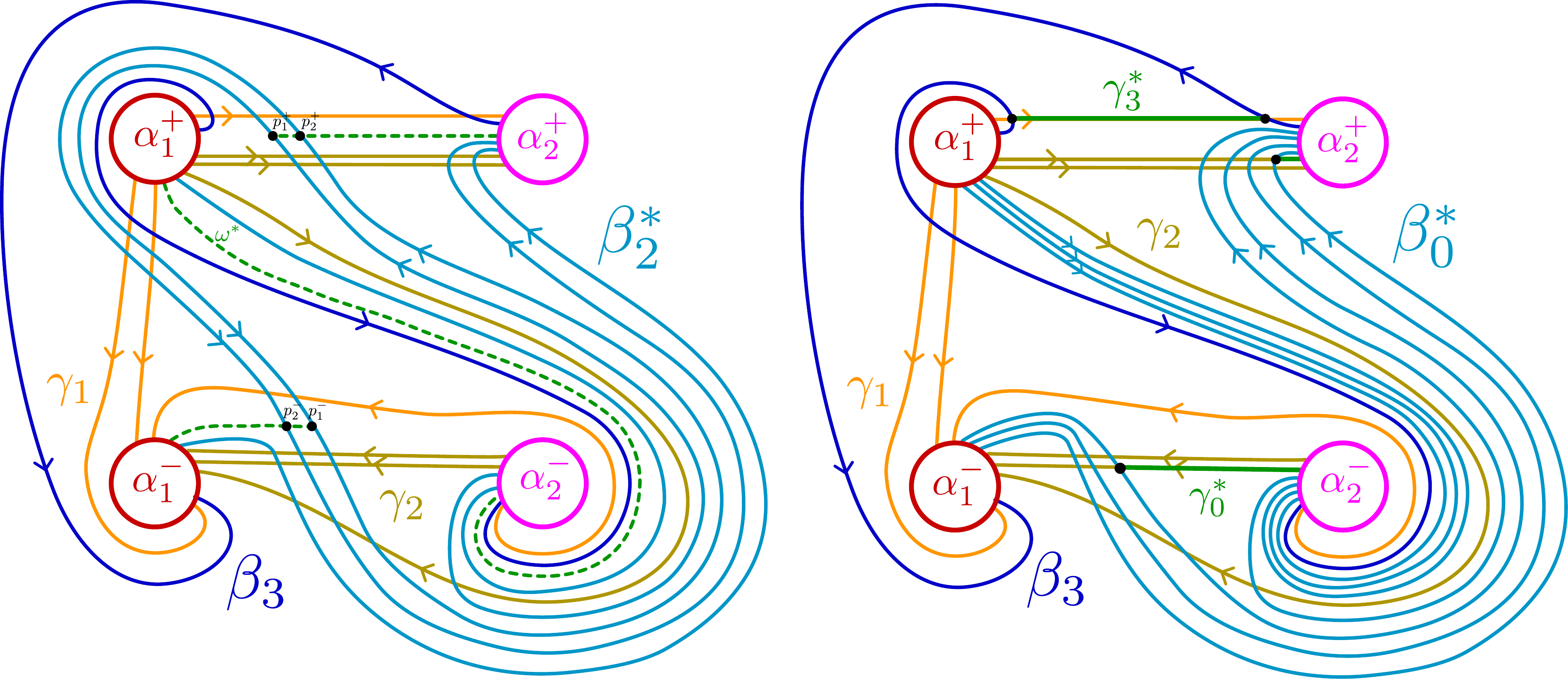}
\put(-320,-20){\large (a)}
\put(-110,-20){\large (b)}
\caption{The fat graph $\Sigma_\A(\n',\g)$ after (a) the first wave move in $\omega^*$ and (b) after all of the wave moves in $\omega^*$.  In (b), we see a pair of wave-busting arcs $\{\gamma_0^*,  \gamma_3^*\}$.}
\label{fig:sec5_WaveSequence}
\end{figure}

Observe that by construction, the curve $\n^*_0$ does not intersect the edge $a_+$ essentially; however, for $i \geq 1$, we have that $\n^*_i$ intersects all edges of $G_{\A}(\g)$ essentially.  As in the argument for the above case (shown in Figure \ref{fig:sec5_RectBust}), it follows that for $i \geq 1$, $\n^*_i$ intersects the rectangles $R_1$ and $R_2$ in a wave-busting pairs; thus $G_{\g}(\n'_i)$ does not contain a wave by Lemma \ref{sat}.  Lemmas \ref{12gon} and \ref{cgraph} then imply that $G_{\g}(\n'_i)$ is a square graph, and so $\n,\n'_{k},\dots,\n'_1$ is contained in the reducing sequence for $\n$ with respect to $\g$ guaranteed by Lemma \ref{reducing}.  Therefore, $G_{\g}(\n'_0)$ is either a square graph or a c-graph and as such $(\n_0',\g)$ contains a wave based in $\n_0'$. 

Now, each arc of $\n_0^*$ has nonzero winding relative to $\g$ at both $\A_1^-$ and $\A_2^+$, and so there is an arc $\g_0^*$ that originates at the innermost inessential point of $\n_0^* \cap \g$ based at $\A_2^+$, travels through $\A_2$, and extends from $\A_2^-$ along an arc in the edge $a_-$ to meet $\n_0^*$ and has coherently oriented endpoints. See Figure \ref{fig:sec5_WaveSequence}(b). In addition, by Lemma \ref{exit}, the curve $\n_3$ meets $\A_1^+$ in a point between parallel arcs in the edge $a_+$, and we have already established that $0<W_i(\n_3) <1$ for $i=1,2$.  Thus, there is an arc $\g^*_3$ in the edge $a_+$ that meets $\n_3$ in its coherently oriented endpoints.    The pair $(\g^*_0,\g^*_3)$ is a wave-busting pair for $G_{\n'_0}(\g)$, implying that $(\n_0',\g)$ does not contain a wave based in $\n_0'$ by Lemma \ref{sat}, contradicting our previous assumptions.  This completes the proof of the lemma.
\end{proof}
\end{lemma}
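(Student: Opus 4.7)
The plan is to locate a wave $\omega$ in $\n_1$ produced by the hypothesis $W_1>1$, perform wave surgery to obtain $\n'=\{\n_2,\n_3\}$, and then show that no diagram in the reducing sequence for $(\n',\g)$ can present $S^3$. In each case I will exhibit a wave-busting pair for $G_\g(\n')$ (or an iterate) using Lemma \ref{weightone}, and a wave-busting pair for $G_{\n'}(\g)$ using the windings $W_i$. Theorem \ref{wave} and Lemma \ref{sat} then close the loop, since wave surgery is a handle slide and so preserves the underlying $3$-manifold.

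To begin, $W_1>1$ supplies an arc of $\n_1$ in the octagon $O'$ both of whose endpoints are inessential intersections with $\g$; joining these endpoints by an arc through $\A_2^-$ and an adjacent rectangle produces an $\n_1$-wave $\omega$. Let $\n_3$ be the surgery result, so that $\n_3$ has slope $-\tfrac{1}{1}$ in $\Sigma_\A$, with $W_1(\n_3)=W_1(\n_1)-1$ and $W_2(\n_3)=1-W_2(\n_2)$. Because Lemma \ref{weightone} prevents any edge of the square graph $G_\g(\A)$ from having weight one, the edges $c$ and $d$ bound rectangular regions $R_1\subset\g_1$ and $R_2\subset\g_2$ in $\Sigma\setminus\nu(\A\cup\g)$. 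The slope $-\tfrac{n\pm 1}{n}$ of $\n_2$ forces $\n_2$ to cross each $R_i$ in a wave-busting arc, yielding a wave-busting pair in $G_\g(\n')$, so by Lemma \ref{sat} this graph admits no wave.

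If $W_1(\n_1)>2$ then $W_1(\n_3)>1$, which as in the proof of Lemma \ref{bothwind} produces an arc $\g_3^*$ of $\g$ meeting $\n_3$ in coherently oriented endpoints and avoiding $\n_2$; the hypothesis $0<W_2(\n_2)<1$ produces, as in the proof of Lemma \ref{nonewind}, an arc $\g_2^*$ in the edge $a_+$ meeting $\n_2$ in coherently oriented endpoints and avoiding $\n_3$. Together $\{\g_2^*,\g_3^*\}$ is a wave-busting pair in $G_{\n'}(\g)$, so by Lemma \ref{sat} no wave based in $\g$ exists either; Theorem \ref{wave} then shows $(\n',\g)$, and hence $(\n,\g)$, is not a Heegaard diagram for $S^3$.

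The subcase $1<W_1(\n_1)<2$ is the main obstacle, since now $0<W_1(\n_3)<1$ and the short-circuit above fails. The remedy is a nested wave sequence: extend the arc from $\A_1^+$ to $\A_2^-$ in $O'$ through $\A_1$ and $\A_2$, using Lemma \ref{exit} and the $J$-symmetry to force the extension to meet $\n_2$ in $2k=|n|-2$ points naturally paired as $p_i^+\leftrightarrow p_i^-$. Each subarc $\omega_i$ is a wave for a successively simpler diagram, giving surgeries $\n_2=\n_k^*,\n_{k-1}^*,\dots,\n_0^*$ with $\n_i'=\{\n_i^*,\n_3\}$. The rectangle argument of the previous paragraph applies verbatim to each $G_\g(\n_i')$ with $i\geq 1$, so by Lemmas \ref{12gon} and \ref{cgraph} this is a reducing sequence and $G_\g(\n_0')$ is either a square graph or a c-graph; in either case $(\n_0',\g)$ admits a wave based in $\n_0'$. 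To reach a contradiction I will exhibit a wave-busting pair in $G_{\n_0'}(\g)$: the curve $\n_0^*$ retains nonzero winding at $\A_1^-$ and $\A_2^+$, producing an arc $\g_0^*$ of $\g$ with coherently oriented endpoints on $\n_0^*$; Lemma \ref{exit} together with $0<W_i(\n_3)<1$ produces an arc $\g_3^*$ in edge $a_+$ with coherently oriented endpoints on $\n_3$. Lemma \ref{sat} then rules out a wave for $G_{\n_0'}(\g)$, contradicting what was just deduced, so $(\n,\g)$ cannot present $S^3$. The principal bookkeeping challenge is verifying that the rectangle-based wave-busting pair in $G_\g(\n_i')$ persists at every stage of the nested surgeries, which is what licenses applying the reducing-sequence machinery of Section \ref{sec:digress} at the final step.
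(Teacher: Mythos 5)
Your proposal is correct and follows essentially the same route as the paper's proof: the initial $\n_1$-wave surgery, the rectangle-based wave-busting pair in $G_\g(\n')$ via Lemma \ref{weightone}, the split into $W_1(\n_1)>2$ versus $1<W_1(\n_1)<2$, and the nested wave sequence feeding into the reducing-sequence machinery with the final wave-busting pair $\{\g_0^*,\g_3^*\}$ all match the paper's argument. The ``principal bookkeeping challenge'' you identify is exactly the point the paper handles by observing that $\n_i^*$ still meets every edge of $G_\A(\g)$ essentially for $i\geq 1$.
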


There is one final case to consider, which mirrors the proof of Lemma \ref{onewind} almost exactly, and so we give an abbreviated proof.

\begin{lemma}\label{twowind}
If $0 < W_1 < 1$ and $W_2 > 1$, then $(\n,\g)$ is not a Heegaard diagram for $S^3$.
\begin{proof}
The proof in this case is virutally identical to the proof of Lemma \ref{onewind}.  As above, $(\n,\g)$ contains a wave based in $\n$:   An arc connecting $\A_1^+$ to $\A_2^-$ extends to a wave $\omega$ based in $\n_2$.  Let $\n' = \{\n_1,\n_3\}$ be the result of surgery on $\n$ along $\omega$.  If $W_2 > 2$, then $(\n',\g)$ and thus $(\n,\g)$ is not a Heegaard diagram for $S^3$.  Otherwise, we find a nested wave sequence that gives rise to a partial reducing sequence $\n,\n'_k,\dots,\n'_1$ for $\n$ with respect to $\gamma$, implying that $(\n'_0,\g)$ contains a wave based in $\n'_0$, where $\n'_0 = \{\n_0^*,\n_3\}$.  The only distinction in this proof is that the roles of $\g_0^*$ and $\g_3^*$ from the proof of Lemma \ref{onewind} are reversed.  Now, $\g_0^*$ is an arc in $a_+$ connecting coherently oriented points of $\n_0^*$ whereas $\g_3^*$ runs over $\A_2$ to connect coherently points of $\n_3$. See Figure \ref{fig:84}. Of course, $\{\g_0^*,\g_3^*\}$ remains a wave-busting pair for $G_{\n'_0}(\g)$, a contradiction.  
\end{proof}
\end{lemma}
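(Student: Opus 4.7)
The plan is to mirror the argument of Lemma \ref{onewind} with the roles of $\n_1$ and $\n_2$ (equivalently, of $\A_1$ and $\A_2$) interchanged. Since $W_2>1$ now plays the role that $W_1>1$ played before, the initial wave in $(\n,\g)$ is based in $\n_2$ rather than $\n_1$, and the argument at the end uses an arc $\g_0^*$ that sits inside the edge $a_+$ (rather than running across $\A_2$), while $\g_3^*$ runs across $\A_2$ (rather than sitting inside $a_+$).

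First I would produce the initial wave $\omega$ for $(\n,\g)$: because $W_2>1$, an arc emanating from $\A_1^+$ into the octagon $O'$ extends through $\A_2^-$ and into a rectangular region to reach an oppositely oriented inessential intersection on $\n_2$, giving a wave based in $\n_2$. Let $\n_3$ be the surgered curve and set $\n'=\{\n_1,\n_3\}$; as in Lemma \ref{onewind}, the new arcs of $\n_3$ have slope $-\tfrac{1}{1}$ and $W_1(\n_3)=1-W_1(\n_1)$, $W_2(\n_3)=W_2(\n_2)-1$.

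Next I would split into two subcases based on $W_2$. If $W_2>2$, then $W_2(\n_3)>1$, so the argument of Lemma \ref{bothwind} gives an arc of $\g$ in the edge $a_+$ with coherently oriented endpoints on $\n_3$; together with an arc coming from $0<W_1(\n_1)<1$ on $\n_1$, this produces a wave-busting pair in $G_{\n'}(\g)$, so $(\n',\g)$ contains no wave based in $\g$, and by the same argument used for $G_{\g}(\n')$ via Lemma \ref{weightone} (using rectangles $R_1,R_2$ in $\Sigma_\g(\A)$ to bust waves based in $\n'$ as in Figure \ref{fig:sec5_RectBust}), $(\n',\g)$, and hence $(\n,\g)$, is not a Heegaard diagram for $S^3$ by Theorem \ref{wave}. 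If instead $1<W_2<2$, apply $\psi$ if necessary so that the slope of $\n_1$ is $-\tfrac{|n|-1}{|n|}$, and build a nested wave sequence $\omega^*$ analogous to the one in Lemma \ref{onewind}: the arc in $O'$ from $\A_1^+$ to $\A_2^-$, extended through $\A_1^-$ and $\A_2^+$ using Lemma \ref{exit} and the $J$-symmetry, meets $\n_1$ in paired points $p_1^+,\dots,p_k^+,p_k^-,\dots,p_1^-$ and avoids $\n_3$.

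I would then iterate the wave surgery: set $\n_k^*=\n_1$ and let $\n_i^*$ be the result of surgering $\n_{i+1}^*$ along the subarc $\omega_{i+1}$ of $\omega^*$ connecting $p_{i+1}^+$ to $p_{i+1}^-$, with $\n'_i=\{\n_i^*,\n_3\}$. For $i\geq1$, $\n_i^*$ meets the two rectangles $R_1,R_2$ of $\Sigma_\g(\A)$ (whose existence is guaranteed by Lemma \ref{weightone}) in a wave-busting pair, so by Lemmas \ref{sat}, \ref{12gon}, and \ref{cgraph}, each $G_\g(\n'_i)$ is a square graph. Thus $\n,\n'_k,\dots,\n'_1$ lies in the reducing sequence for $\n$ with respect to $\g$ given by Lemma \ref{reducing}, so $G_\g(\n'_0)$ is a square graph or a c-graph, and in particular $(\n'_0,\g)$ contains a wave based in $\n'_0$.

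Finally, I would bust that wave directly. Since the final arc $\n_0^*$ has nonzero winding at both $\A_1^-$ and $\A_2^+$, there is an arc $\g_3^*$ of $\g$ that originates at the innermost inessential point of $\n_3\cap\g$ based at $\A_1^+$, travels through $\A_2$, and returns from $\A_2^-$ along an arc in $a_-$ to meet $\n_3$ in a coherently oriented endpoint. Dually, $0<W_1(\n_1)<1$ together with Lemma \ref{exit} produce an arc $\g_0^*$ lying entirely in the edge $a_+$ that meets $\n_0^*$ in coherently oriented endpoints. The pair $(\g_0^*,\g_3^*)$ is a wave-busting pair for $G_{\n'_0}(\g)$, so by Lemma \ref{sat} there is no wave based in $\n'_0$, contradicting the previous paragraph. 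Hence $(\n,\g)$ cannot be a Heegaard diagram for $S^3$. The main obstacle is keeping track of the swapped roles carefully: verifying that $\g_0^*$ really lives in $a_+$ (using $0<W_1<1$ and Lemma \ref{exit}) and that $\g_3^*$ can indeed be routed through $\A_2$ despite the nested surgeries, so that neither arc has been disturbed by passage from $\n$ to $\n'_0$.
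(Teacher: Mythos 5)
Your proposal follows the paper's argument for this lemma essentially verbatim: it is the role-swapped version of Lemma \ref{onewind}, with the correctly swapped winding formulas $W_1(\n_3)=1-W_1(\n_1)$ and $W_2(\n_3)=W_2(\n_2)-1$, the nested wave sequence now applied to $\n_1$, and the key reversal of $\g_0^*$ (now lying in $a_+$ and meeting $\n_0^*$) and $\g_3^*$ (now running over $\A_2$ and meeting $\n_3$) identified exactly as in the paper. Two small label slips do not affect the argument: in the $W_2>2$ subcase the wave-busting pair of $\g$--arcs rules out waves based in $\n'$ (not in $\g$) while the rectangle-generated pair of $\n'$--arcs rules out waves based in $\g$ --- the two conclusions are crossed in your write-up, but together they still contradict Theorem \ref{wave} --- and the existence of $\g_3^*$ should be justified by the residual winding of $\n_3$ rather than that of $\n_0^*$.
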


\begin{figure}[h!]
\centering
\includegraphics[scale = .25]{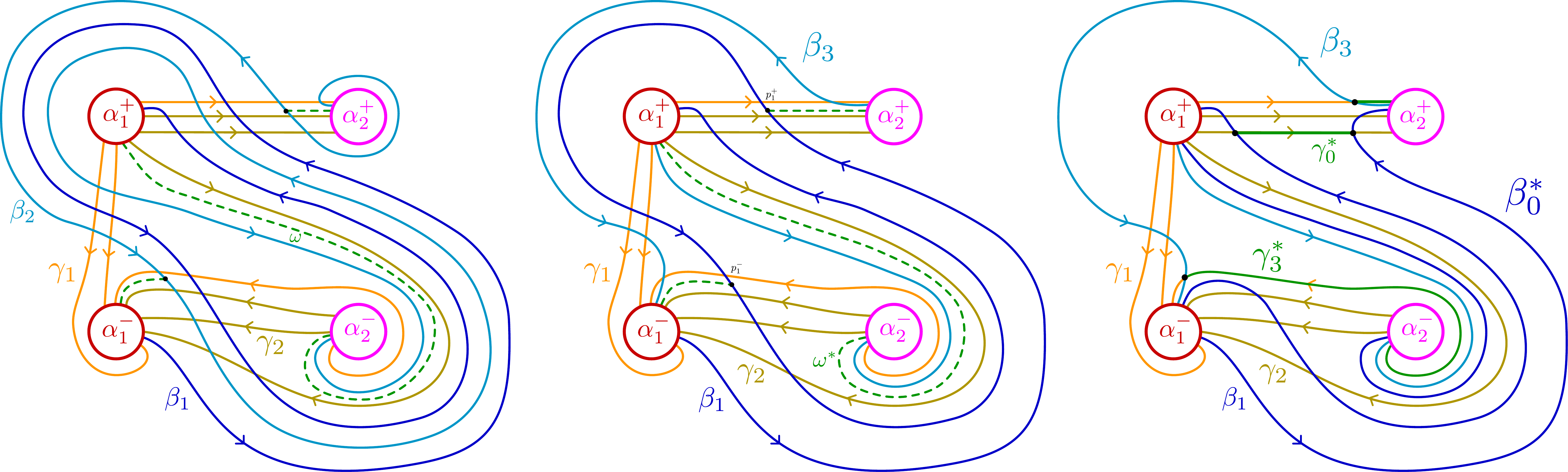}
\put(-360,-20){\large (a)}
\put(-215,-20){\large (b)}
\put(-70,-20){\large (c)}
\caption{The case in which $0<W_1<1$ and $W_2>1$.  The wave $\omega$ in (a), the nested wave sequence $\omega^*$ in (b), and the wave-busting pair $\{\g_0^*,\g_3^*\}$ in (c) appear proof of Lemma \ref{twowind}.}
\label{fig:84}
\end{figure}

We summarize the previous three sections.

\begin{proof}[Proof of Proposition \ref{claim1}]
Suppose that $(\A,\n,\g)$ is a minimal diagram for the genus two trisection $X = X_1 \cup X_2 \cup X_3$ which satisfies $\io(\A,\n) = 2$, and suppose by way of contradiction that $(\A,\g)$ is not the trivial diagram. By the minimality of $(\A,\n,\g)$, the diagram $(\n,\g)$ is also nontrivial.  By Theorem \ref{wave}, there are waves for both $(\A,\g)$ and $(\n,\g)$.  By Lemma \ref{gwave}, $(\A,\g)$ does not contain a wave based in $\g$, and so $G_{\A}(\g)$ is a Type III graph, and we parameterize the two arcs of $\n$ in $\Sigma_{\A}(\n,\g)$ with slope $\frac{m}{n}$.  By Lemma \ref{mone}, we have $m \neq 1$.  In addition, Lemma \ref{nogwave} asserts $(\n,\g)$ does not contain a wave based in $\g$; therefore, it must have a wave based in $\n$, and so $w_{\g}(b) = 0$ in $G_{\A}(\g)$ by Lemma \ref{bnotzero}. 

Lemma \ref{nothard} then implies that $\frac{m}{n}$ must be $-\frac{n \pm 1}{n}$, the case dealt with in this section.  Concerning the winding numbers $W_1$ and $W_2$ of $\n$ with respect to $\g$, Lemmas \ref{noninteger}, \ref{sameint}, \ref{nonewind}, and \ref{bothwind} yield that $W_i > 1$ and $0 < W_j < 1$ for $\{i,j\} = \{1,2\}$.  However, Lemmas \ref{onewind} and \ref{twowind} show that, in these two cases, $(\n,\g)$ is not a Heegaard diagram for $S^3$, a contradiction.  We conclude that $(\A,\g)$ must be the trivial diagram.
\end{proof}

\section{The case in which $\io(\A,\g) = 2$}\label{sec:main}

Proposition \ref{claim1} reveals that a minimal trisection diagram $(\A,\n,\g)$ for a $(2,0)$--trisection satisfies $\io(\A,\n) = \io(\A,\g) = 2$.  In this section, we show that, in addition, $\io(\n,\g)$ must also equal 2, which will complete the proof of the main theorem.  The strategy is to consider the graph $\Sigma_{\A}(\n,\g)$, in which each of curve in $\n \cup \g$ contributes a single arc, and the slopes of $\n$-arcs agree and the slopes of the $\g$-arcs agree.  We may suppose without loss of generality that the $\g$-arcs have slope $\frac{1}{0}$ and that the $\n$--arcs have slope $\frac{m}{n}$, where $m$ is odd and $n$ is even.  Although this choice is somewhat non-standard (it seems more natural to let slopes of $\g$-arcs vary) this convention allows us to import techniques from Section \ref{sec:hard_case} more consistently.  We will orient $\n$ and $\g$ so that oriented arcs in $\Sigma_{\A}(\n,\g)$ originate in $\A_i^-$ and terminate in $\A_i^+$, as shown in Figure \ref{fig:sec6_slopes}. \\

\begin{figure}[h!]
\centering
\includegraphics[scale = .75]{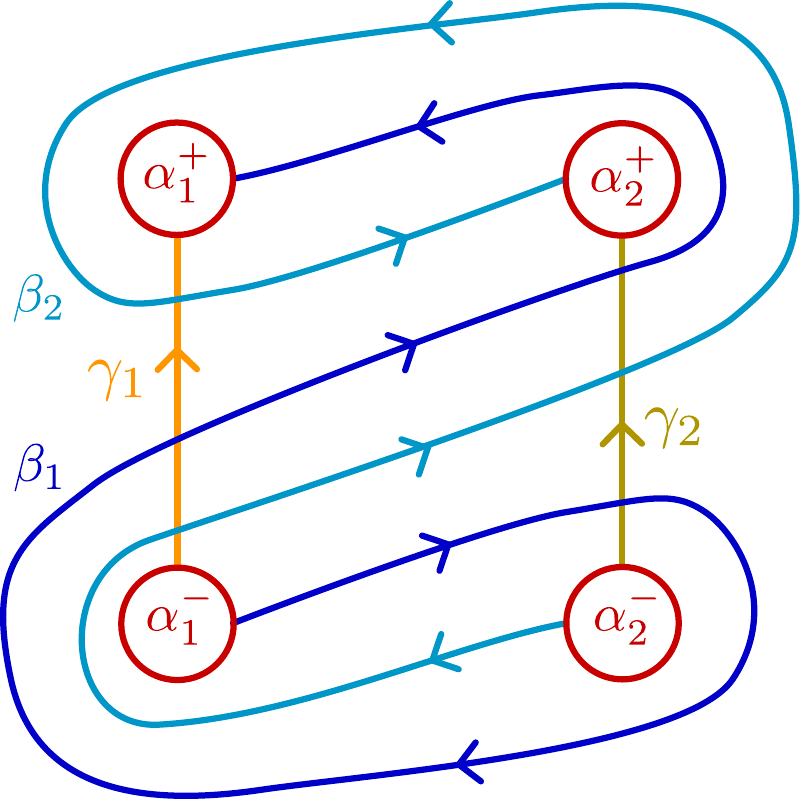}
\caption{An example of $\Sigma_\A(\n,\g)$ in which the $\n$--arcs have slope $\frac{3}{4}$, and the winding numbers $W_1$ and $W_2$ are both zero.}
\label{fig:sec6_slopes}
\end{figure}

Recall the homeomorphisms $\varphi$ and $\varphi^{-1}$ of $\Sigma_\A$ defined in Section \ref{sec:prelims} and given by matrices
\[ M_{\varphi} = \begin{pmatrix}
1 & 1 \\ 0 & 1 \end{pmatrix} \qquad \text{ and } \qquad 
M_{\varphi^{-1}} = \begin{pmatrix}
1 & -1 \\ 0 & 1 \end{pmatrix}.\]
Note that these homeomorphisms preserve $\g$ setwise but exchange $\A_2^+$ and $\A_2^-$, which results in a reversal of orientations of $\n_2$ and $\g_2$.  When we apply such a homeomorphism, we will always re-label vertices and re-orient the curves in $\n$ and $\g$ to match the conventions given above.  See Figure \ref{fig:sec6_automorphisms}.

\begin{lemma}\label{smallslope}
The slope $\frac{m}{n}$ for $\n$ may be chosen so that $-\frac{1}{2} < \frac{m}{n} \leq \frac{1}{2}$.
\begin{proof}
Observe that $\varphi^{\pm 1}$ preserves $\g$ and takes the slope $\frac{m}{n}$ of $\n$ to $\frac{m\pm n}{n}$.  Since there exists an integer $m'$ such that $-\frac{n}{2} < m' \leq \frac{n}{2}$ and $m' \equiv m \, \, \text{mod}(n)$, we may map $\n$-arcs to arcs of slope $\frac{m'}{n}$ by repeated applications of either $\varphi$ or $\varphi^{-1}$, as desired.
\end{proof}
\end{lemma}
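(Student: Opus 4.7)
The plan is to apply iterated powers of the homeomorphism $\varphi$ introduced in Section~\ref{sec:prelims} to shift the slope of $\n$ into the desired fundamental domain, while preserving $\g$ setwise. First I would verify that $\varphi$ preserves the collection of $\g$-arcs: the action of
\[ M_\varphi = \begin{pmatrix} 1 & 1 \\ 0 & 1 \end{pmatrix} \]
on the slope column vector $\begin{pmatrix} 1 \\ 0 \end{pmatrix}$ of the $\g$-arcs returns $\begin{pmatrix} 1 \\ 0 \end{pmatrix}$, so $\varphi(\g)$ consists of the same two arcs of slope $\frac{1}{0}$ as $\g$, up to isotopy in $\Sigma$. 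Since $\varphi$ also fixes $\A$ setwise by construction, applying $\varphi$ to the triple $(\A,\n,\g)$ yields a diagram of the form $(\A,\varphi(\n),\g)$ that is equivalent to the original trisection diagram.

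Next I would compute the effect on $\n$. Since
\[ M_{\varphi^{\pm 1}} \begin{pmatrix} m \\ n \end{pmatrix} = \begin{pmatrix} m \pm n \\ n \end{pmatrix}, \]
the homeomorphism $\varphi^{\pm 1}$ sends $\n$-arcs of slope $\frac{m}{n}$ to $\n$-arcs of slope $\frac{m \pm n}{n}$. Iterating, $\varphi^k$ produces $\n$-arcs of slope $\frac{m + kn}{n}$ for any integer $k \in \Z$.

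Finally, since $n$ is nonzero (as $\io(\A,\n) = 2$ and $m$ is odd), there exists a unique integer $m'$ with $m' \equiv m \pmod{n}$ and $-\tfrac{|n|}{2} < m' \leq \tfrac{|n|}{2}$. Choosing the corresponding $k$ and applying $\varphi^k$ produces a trisection diagram whose $\n$-arcs have slope $\frac{m'}{n}$ lying in $(-\tfrac{1}{2}, \tfrac{1}{2}]$, as desired. I do not anticipate any genuine obstacle here; the argument is a routine normalization, and the only point requiring care is to confirm that $\varphi$ acts as an honest self-homeomorphism of $\Sigma_\A$ that preserves the trisection structure up to equivalence, which is already established in Section~\ref{sec:prelims}.
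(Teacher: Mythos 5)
Your proof is correct and follows essentially the same route as the paper's: apply powers of $\varphi^{\pm 1}$, which fix the slope $\frac{1}{0}$ of the $\g$--arcs and translate the $\n$--slope by multiples of $n$, and then choose the representative $m'$ of $m$ modulo $n$ lying in $(-\frac{|n|}{2},\frac{|n|}{2}]$. One small quibble: your parenthetical claim that $n\neq 0$ does not actually follow from $\io(\A,\n)=2$ and $m$ odd (the paper later treats $n=0$, where the $\n$--arcs share the slope $\frac{1}{0}$ with the $\g$--arcs, as a genuine case in the proof of Theorem \ref{claim2}), but in that degenerate case $\varphi$ fixes the slope and no normalization is needed, so nothing in your argument is lost.
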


\begin{figure}[h!]
\centering
\includegraphics[scale = .55]{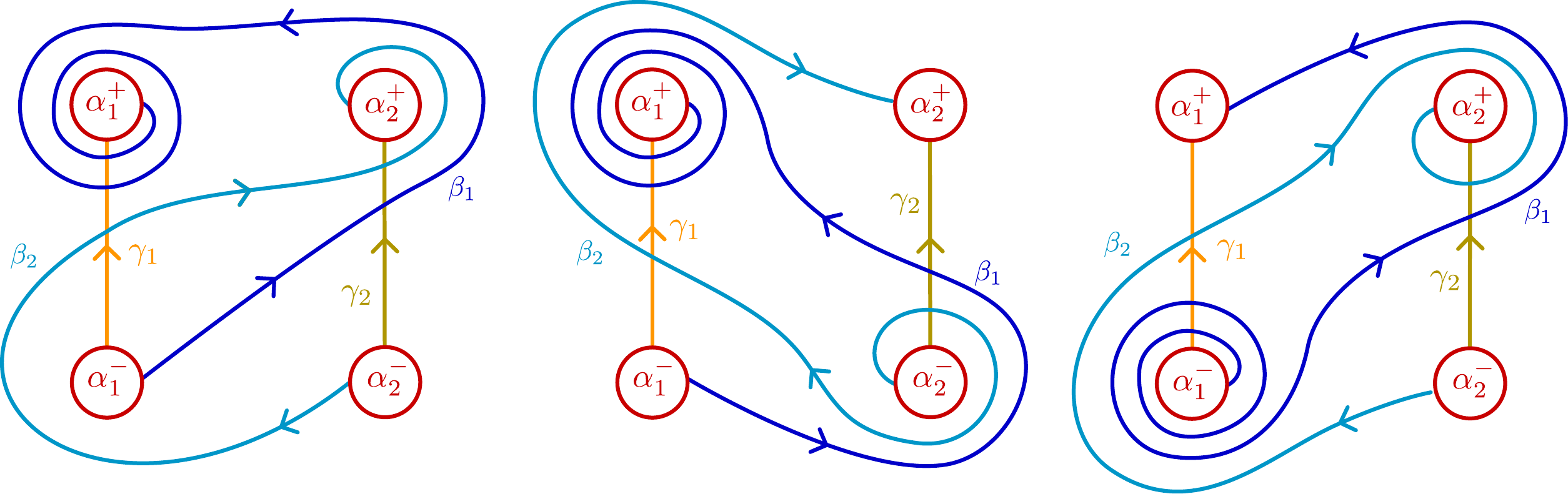}
\put(-345,-25){\large$G$}
\put(-220,-25){\large$\varphi^{-1}(G)$}
\put(-95,-25){\large$H\circ\varphi^{-1}(G)$}
\caption{The effects of the homeomorphisms $\varphi^{-1}$ and $H \circ \varphi^{-1}$ on an example of $\Sigma_\A(\n,\g)$.}
\label{fig:sec6_automorphisms}
\end{figure}

As in previous sections, we will systematically rule out various configurations of $\n$ and $\g$ in $\Sigma_\A$ until the only remaining cases are the ones in which $\io(\n,\g) = 2$, shown in Figure \ref{fig:StdDiags}.

\begin{lemma}\label{bigm}
If $m > 1$, then $(\n,\g)$ is not a Heegaard splitting of $S^3$.
\begin{proof}
We will show that there are wave-busting arcs contained in both $\n$ and $\g$, implying that $(\n,\g)$ does not contain a wave.  By applying the horizontal reflection $H$ if necessary, we may suppose without loss of generality that $-\frac{1}{2} < \frac{m}{n} < - \frac{1}{|n|}$.  Then, $\frac{m}{n} \neq -\frac{n\pm 1}{n}$ and by hypothesis, $m > 1$.  Thus, by an argument identical to that of Lemma \ref{nothard}, there exists a wave-busting arc $\g^* \subset \g_1$ for $(\n,\g)$ whose endpoints $q_1$ and $q_2$ are oppositely oriented points in $\n_1 \cap \g_1$.  However, in this situation, $q_1$ and $q_2$ also cobound an arc $\n^* \subset \n_1$ which meets $\g_2$ in a single point, so that $\n^*$ is a wave-busting arc for $(\n,\g)$.  See Figure \ref{fig:sec6_winding}(a).  By Lemma \ref{sat}, neither $G_{\n}(\g)$ nor $G_{\g}(\n)$ contains a wave, and thus $(\n,\g)$ is not a Heegaard diagram for $S^3$ by Theorem \ref{wave}.
\end{proof}
\end{lemma}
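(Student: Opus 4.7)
The plan is to derive a contradiction from Theorem \ref{wave}: I will show that under the hypothesis $m > 1$, neither $G_{\n}(\g)$ nor $G_{\g}(\n)$ contains a wave, so $(\n,\g)$ cannot be a nontrivial Heegaard diagram for $S^3$. Since $\io(\n,\g) \geq 4 > 2$, this rules out $(\n,\g)$ being a Heegaard diagram for $S^3$ at all.

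First I would normalize the slope $\frac{m}{n}$. By applying the horizontal reflection $H$ of Section \ref{sec:prelims} if necessary, I can assume that $\frac{m}{n} < 0$. Combined with Lemma \ref{smallslope}, this places $\frac{m}{n}$ strictly between $-\frac{1}{2}$ and $-\frac{1}{|n|}$. In particular, since $|n| \geq 4$, the slope is well away from the exceptional slopes $-\frac{|n|\pm 1}{|n|}$ that caused trouble in Section \ref{sec:hard_case}, so the elementary ``small slope'' argument from Lemma \ref{nothard} applies directly.

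That argument, carried out verbatim in the square-picture model of $\Sigma_\A$, produces a wave-busting arc $\g^* \subset \g_1$: its endpoints $q_1, q_2$ are oppositely oriented points of $\n_1 \cap \g_1$, and its interior meets $\n_2$ in a single point (and misses $\n_1$). The hypothesis $m > 1$ is precisely what guarantees that a genuine wave-busting arc exists — when $m = 1$ there is no pair of consecutive $\n_1$-arcs flanking a single $\n_2$-arc on the correct side.

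The new ingredient, which was not available in Lemma \ref{nothard}, is that now $\io(\A,\g) = 2$, so $\g$ and $\n$ play completely symmetric roles inside $\Sigma_\A$: each contributes exactly two parallel arcs of fixed slope. Consequently, the same two points $q_1, q_2 \in \n_1 \cap \g_1$ cobound a subarc $\n^* \subset \n_1$ whose interior crosses $\g_2$ exactly once and avoids $\g_1$. Thus $\n^*$ is a wave-busting arc for $(\n,\g)$ with the roles of $\n$ and $\g$ interchanged. Two applications of Lemma \ref{sat} — one to $\g^*$ and one to $\n^*$ — then show that neither side admits a wave, and Theorem \ref{wave} delivers the contradiction.

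The main obstacle is verifying, from the slope data alone, that the two intersection-count claims about $\g^*$ and $\n^*$ really hold simultaneously; this is a bookkeeping exercise with Lemma \ref{inter} on the four-punctured sphere, and it is what forces the preliminary reduction of slope via $H$ and $\varphi^{\pm 1}$. A picture along the lines of Figure \ref{fig:sec6_winding}(a) should make the argument transparent, and the symmetry between $\n$ and $\g$ afforded by $\io(\A,\n) = \io(\A,\g) = 2$ means no new case analysis is required beyond what was done in Lemma \ref{nothard}.
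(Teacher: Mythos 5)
Your proposal is correct and follows essentially the same route as the paper: normalize the slope to $-\frac{1}{2} < \frac{m}{n} < -\frac{1}{|n|}$ via $H$ (and Lemma \ref{smallslope}), invoke the argument of Lemma \ref{nothard} to produce the wave-busting arc $\g^* \subset \g_1$ with oppositely oriented endpoints $q_1, q_2 \in \n_1 \cap \g_1$, observe that those same points cobound a wave-busting arc $\n^* \subset \n_1$ meeting $\g_2$ once, and conclude with Lemma \ref{sat} and Theorem \ref{wave}. Your identification of the symmetry afforded by $\io(\A,\g) = 2$ as the new ingredient matches the paper's reasoning exactly.
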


\begin{figure}[h!]
\centering
\includegraphics[scale = .8]{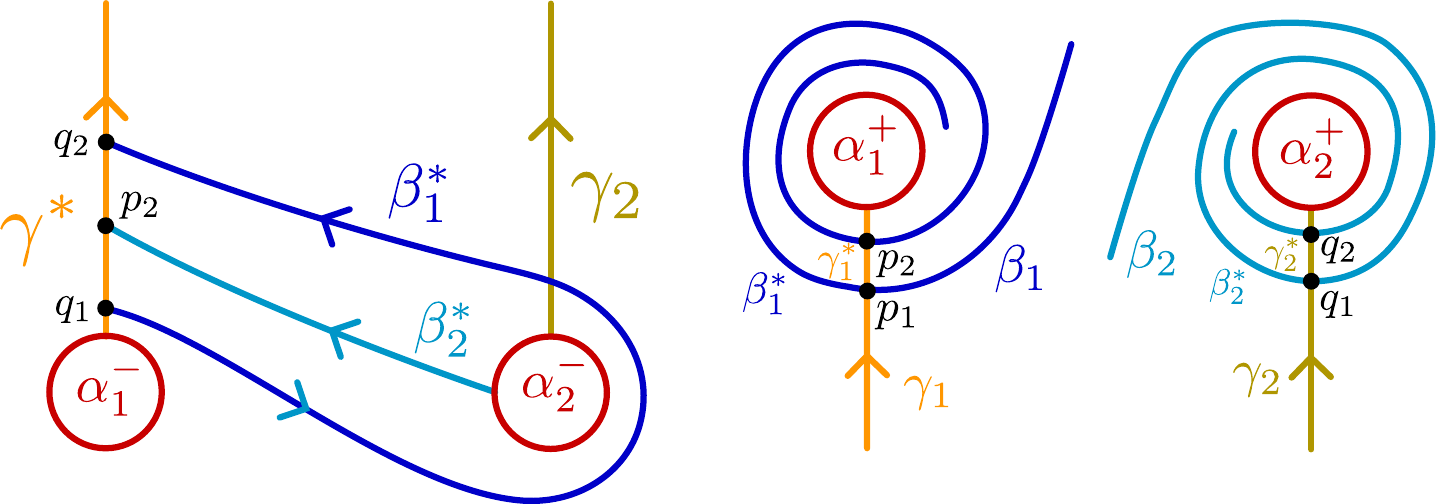}
\put(-260,-15){\large (a)}
\put(-90,-15){\large (b)}
\caption{In (a), we see the wave-busting arcs $\n_1^*$ and $\g^*$ that exist when $m>1$. In (b), we see the wave-busting pairs $\{\n_1^*, \n_2^*\}$ and $\{\g_1^*, \g_2^*\}$ that exists when both windings are large.}
\label{fig:sec6_winding}
\end{figure}

We will assume for the remainder of the section that $m = 1$.  As in Section \ref{sec:hard_case}, for simplicity we will isotope $\n$ so that all inessential intersections of $\n$ and $\g$  are based in $\A_1^+$ and $\A_2^+$, and we define the winding $W_i$ of $\n_i$ relative to $\g_i$ at $\A_i$ as above.  The main difference in this case is that $\io(\A_i,\g) = 1$, so $W_i$ is a (signed) integer.

\begin{lemma}\label{muchwind}
If $|W_i| > 1$ for both $i=1,2$, then $(\n,\g)$ is not a Heegaard splitting of $S^3$.
\begin{proof}
If $|W_i| > 1$, then there is an arc $\g_i^* \subset \g_i$ which meets $\n_i$ in its coherently oriented endpoints and avoids $\n$ in its interior.  See Figure \ref{fig:sec6_winding}(b).  Thus, $(\g_1^*,\g_2^*)$ is a wave-busting pair and $G_{\n}(\g)$ does not contain a wave by Lemma \ref{sat}.  Let $p_1,p_2$ denote the endpoints of $\g_1^*$ and $q_1,q_2$ denote the endpoints of $\g_2^*$.  Note that there are arcs $\n_1^* \subset \n_1$ and $\n_2^* \subset \n_2$ which have endpoints on $p_1,p_2$ and $q_1,q_2$, respectively, and which avoid $\g$ in their interiors.  Thus, $(\n_1^*,\n_2^*)$ is also wave-busting pair, and we conclude that $(\n,\g)$ does not contain a wave and, as such, is not a Heegaard splitting of $S^3$.
\end{proof}
\end{lemma}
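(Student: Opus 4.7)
The plan is to apply Theorem \ref{wave}, which asserts that every nontrivial genus two Heegaard diagram for $S^3$ contains a wave. I will exhibit wave-busting pairs on both sides of $(\n,\g)$ and conclude via Lemma \ref{sat} that neither $G_\n(\g)$ nor $G_\g(\n)$ admits a wave. Since the hypothesis $|W_i|>1$ for $i=1,2$ clearly forces $\io(\n,\g)>2$, so the diagram is nontrivial, this will contradict the possibility that $(\n,\g)$ is a Heegaard diagram for $S^3$. The strategy is symmetric to the one used in Section \ref{sec:hard_case}, but simpler because now we control the local picture near both vertices $\A_i^+$ directly via the winding hypotheses.

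First I would produce a wave-busting pair inside $\g$. Under the normalization $m=1$, each $\g_i$ is a single seam connecting $\A_i^+$ to $\A_i^-$ and $\n_i$ winds $W_i$ times around $\A_i^+$ relative to $\g_i$. When $|W_i|>1$, at least two consecutive inessential intersections of $\n_i$ with $\g_i$ are based in $\A_i^+$ and coherently oriented (by Lemma \ref{esssame}, or by the definition of winding). The subarc $\g_i^*\subset\g_i$ between these two consecutive intersection points lies in a neighborhood of $\A_i^+$, meets $\n_i$ precisely in its two coherently oriented endpoints, and its interior encounters no other curve. Applying this to $i=1$ and $i=2$ in turn yields $\g_1^*$ and $\g_2^*$, and $(\g_1^*,\g_2^*)$ is a wave-busting pair for $G_\n(\g)$, so Lemma \ref{sat} rules out any wave in $G_\n(\g)$.

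Next I would dualize this construction to obtain a wave-busting pair inside $\n$. The two endpoints $p_1,p_2$ of $\g_1^*$ are, by construction, consecutive intersections of $\n_1$ with $\g_1$, so the subarc $\n_1^*\subset\n_1$ between them lives in a small neighborhood of $\A_1^+$, has its interior disjoint from $\g$, and has coherently oriented endpoints on $\g_1$. The same construction with $\g_2^*$ yields $\n_2^*\subset\n_2$. Thus $(\n_1^*,\n_2^*)$ is a wave-busting pair for $G_\g(\n)$ and Lemma \ref{sat} rules out any wave in $G_\g(\n)$ as well. Combining both applications with Theorem \ref{wave}, the nontrivial diagram $(\n,\g)$ cannot be a Heegaard diagram for $S^3$.

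The main obstacle I anticipate is verifying that the subarcs $\g_i^*$ and $\n_i^*$ are genuinely disjoint from the \emph{other} curves $\n_j,\g_j$ ($j\ne i$) in their interiors. For $\g_i^*$ this follows because the two consecutive inessential intersection points on $\g_i$ and based at $\A_i^+$ are separated only by a small piece of $\g_i$ hugging $\A_i^+$; but one must check that no strand of $\n_j$ or $\g_j$ slips into this neighborhood. The normalization isotoping all inessential intersections to $\A_1^+$ or $\A_2^+$, together with the fact that $\g_j$ has its endpoints only at $\A_j^\pm$, should ensure a clean local picture. The same reasoning applied in reverse handles $\n_i^*$, which gives the wave-busting pair on the $\n$-side.
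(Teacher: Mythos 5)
Your proof is correct and follows essentially the same route as the paper: the winding hypothesis $|W_i|>1$ produces the wave-busting pair $(\g_1^*,\g_2^*)$ ruling out a wave in $G_\n(\g)$, and the dual subarcs $\n_1^*,\n_2^*$ sharing those endpoints give the wave-busting pair ruling out a wave in $G_\g(\n)$, contradicting Theorem \ref{wave}. The only difference is that you spell out the local verification near $\A_i^+$ that the paper delegates to Figure \ref{fig:sec6_winding}(b).
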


Recall from Section \ref{sec:start} the definition of the $0$--replacement for a set of curves $\delta$ such that $\Sigma_{\A}(\delta)$ contains two arcs of slope $\frac{1}{n}$.

\begin{lemma}\label{replace}
Suppose $W_2 = 0$.  If $\n_3$ and $\g_3$ are the $0$--replacements for $\n$ and $\g$, respectively, then $\io(\n_2,\g_3) = \io(\n_3,\g_2) = 1$.  In addition, if $W_1(\n_1) \geq 2$, then $\io(\n_3,\g_3) = W_1(\n_1)-2$.  
\begin{proof}
The fact that $\io(\n_2,\g_3) = \io(\n_3,\g_2) = 1$ is clear from Figure \ref{fig:sec6_even_odd}.  Let $k = W_1(\n_1) \geq 2$, and suppose first that $k$ is even.  Isotope $\n_1$ so that $\frac{k}{2}$ of the inessential intersections of $\n_1$ and $\g_1$ are based at $\A_1^+$ and $\frac{k}{2}$ are based at $\A_1^-$, and so that $\n_1 \cap \A_1 = \g_3 \cap \A_1$.  Then $\n_3$, isotoped to remain disjoint from $\n_1$, has precisely $\frac{k}{2}-1$ essential intersections with $\g_3$ based at $\A_1^+$ and precisely $\frac{k}{2} - 1$ based at $\A_1^-$, yielding $\io(\n_3,\g_3) = k-2$.  See Figure \ref{fig:sec6_even_odd}(a).

If $k$ is odd, isotope $\n_1$ so that one inessential intersection of $\n_1$ and $\g_1$ is contained in $\A_1$, $\frac{k-1}{2}$ are based at $\A_1^+$, and $\frac{k-1}{2}$ are based at $\A_1^-$.  Then $\n_3$ may be isotoped so that one inessential intersection of $\n_3$ and $\g_3$ is contained in $\A_1$; otherwise, $\n_3$ and $\g_3$ have $\frac{k-1}{2}  -1$ inessential intersections based at $\A_1^+$ and $\frac{k-1}{2} - 1$ based at $\A_1^-$.  Thus, in this case as well, $\io(\n_3,\g_3) = k-2$.  See Figure \ref{fig:sec6_even_odd}(b)
\end{proof}
\end{lemma}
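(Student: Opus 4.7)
The plan is to compute the geometric intersection numbers directly by choosing convenient representatives for $\n_3$ and $\g_3$ in their isotopy classes, exploiting the flexibility in how each is routed past the windings of $\n_1$ and $\g_1$ near $\A_1$. Both 0--replacements consist of two cross-seams of slope $\frac{0}{1}$, disjoint respectively from $\n$ and $\g$, and they are determined only up to isotopy in $\Sigma$.

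For the first statement, I would use Lemma \ref{inter} together with a direct inspection of the picture in $\Sigma_\A(\n,\g)$ near $\A_2^\pm$. The hypothesis $W_2 = 0$ ensures that $\n_2$ has no inessential intersections with $\g$ based at $\A_2^+$ (and we have already arranged none at $\A_2^-$). Since $\g_3$ consists of cross-seams that must go around $\g_2$ (a seam of slope $\frac{1}{0}$ from $\A_2^+$ to $\A_2^-$) while remaining disjoint from $\g_2$, the arc $\n_2$ of slope $\frac{1}{n}$ meets $\g_3$ in exactly one essential point. A symmetric argument for $\n_3$ and $\g_2$ yields $\io(\n_3, \g_2) = 1$.

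For the formula $\io(\n_3, \g_3) = k - 2$ with $k = W_1(\n_1) \geq 2$, I would proceed by cases on the parity of $k$. When $k$ is even, isotope $\n_1$ so that its $k$ inessential intersections with $\g_1$ split symmetrically (with $k/2$ based at $\A_1^+$ and $k/2$ at $\A_1^-$) and so that $\n_1 \cap \A_1$ coincides with $\g_3 \cap \A_1$. The 0--replacement $\n_3$, being disjoint from $\n$, can then be routed between the windings of $\n_1$ around $\A_1$, contributing exactly $k/2 - 1$ essential intersections with $\g_3$ near each of $\A_1^\pm$, for a total of $2(k/2 - 1) = k - 2$.

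When $k$ is odd, the $k$ inessential intersections of $\n_1$ and $\g_1$ cannot split symmetrically across $\A_1^\pm$; one must lie within the annular region $\A_1$ itself, with the remaining $k - 1$ splitting as $(k-1)/2$ at each of $\A_1^\pm$. I would then isotope $\n_3$ and $\g_3$ so that they also share one inessential intersection in $\A_1$, while realizing $(k-1)/2 - 1$ essential intersections at each of $\A_1^\pm$, for a total of $1 + 2\bigl((k-1)/2 - 1\bigr) = k - 2$. The main obstacle is verifying that these chosen representatives actually realize minimal intersection with one another; in particular, the odd case requires careful combinatorial bookkeeping to ensure that the ``interior'' intersection within $\A_1$ is forced and that no further reduction is possible.
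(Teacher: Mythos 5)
Your proposal follows essentially the same route as the paper: the first claim is read off from the local picture near $\A_2^{\pm}$, and the count $\io(\n_3,\g_3)=k-2$ is obtained by splitting into the even and odd cases for $k=W_1(\n_1)$, distributing the inessential intersections of $\n_1$ and $\g_1$ symmetrically across $\A_1^{\pm}$ (with one intersection pushed into $\A_1$ itself when $k$ is odd), and then routing $\n_3$ disjointly from $\n_1$ to realize $\frac{k}{2}-1$ (resp.\ $\frac{k-1}{2}-1$) intersections with $\g_3$ at each vertex. The arithmetic in both parities matches the paper's, so the argument is correct and not a genuinely different approach.
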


\begin{figure}[h!]
\centering
\includegraphics[scale = .75]{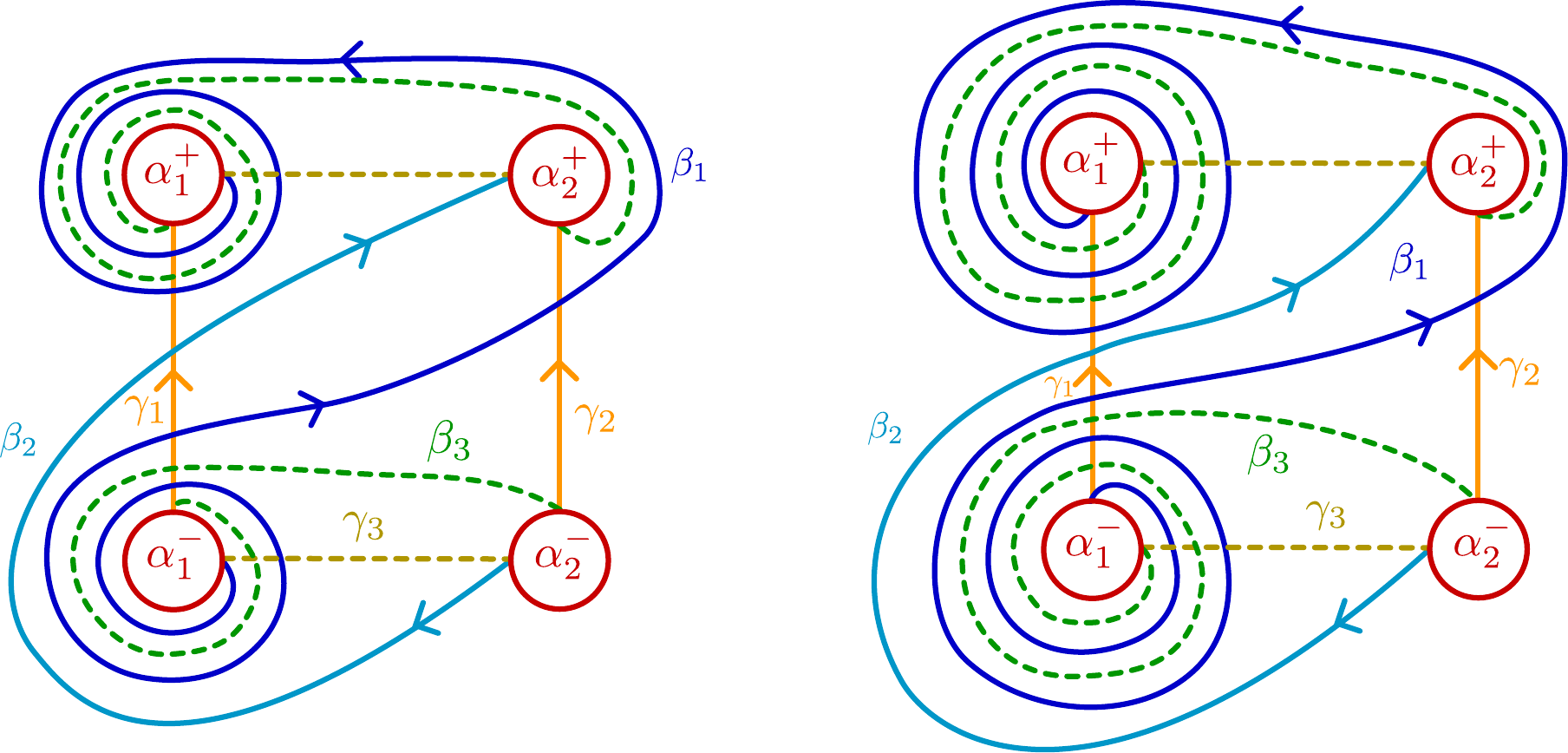}
\put(-325,-15){\large (a)}
\put(-105,-15){\large (b)}
\caption{The $0$--replacements for $\n$ and $\g$ in the case that $W_2=0$ and $W_1$ is (a) even and (b) odd. As shown, the $\n$--arcs have slope $\frac{1}{2}$, and we have (a) $W_1(\n_1)=4$ and (b) $W_1(\n_1)=5$.  The curve $\A_3$ is suppressed for clarity (cf. Figure \ref{fig:0-replacements}).}
\label{fig:sec6_even_odd}
\end{figure}

Finally, we use the techniques developed up to this point to prove that if $(\A,\n,\g)$ is a minimal trisection diagram, then each Heegaard diagram $(\A,\n)$, $(\A,\g)$, and $(\n,\g)$ is standard, from which the main theorem follows.

\begin{theorem}\label{claim2}
If $(\A,\n,\g)$ is a minimal diagram for a $(2,0)$--trisection $X = X_1 \cup X_2 \cup X_3$, then $\io(\A,\n) = \io(\A,\g) = \io(\n,\g) = 2$ and $(\A,\n,\g)$ is homeomorphic to one of the standard diagrams pictured in Figure \ref{fig:StdDiags}.
\begin{proof}
By Proposition \ref{claim1}, if $(\A,\n,\g)$ is minimal, then $\io(\A,\n) = \io(\A,\g) = 2$.  It only remains to show that $\io(\n,\g) = 2$.  We may assume that in $G_{\A}(\n,\g)$, the two $\g$-arcs have slope $\frac{1}{0}$, and by Lemmas \ref{smallslope} and \ref{bigm}, the two $\n$-arcs have slope $\frac{1}{n}$ for some even $n$.  Additionally, by Lemma \ref{muchwind}, we may assume without loss of generality that $|W_2| \leq 1$.

First, suppose that  $|n| \geq 4$.  If $n < 0$, we may apply the horizontal reflection $H$; hence, we assume that $n > 0$.  By Lemma \ref{inter}, we have $\n_i \cap \g_i$ contains $\frac{n}{2}-1$ essential intersections, and, for $i \neq j$, $\n_i \cap \g_j$ contains $\frac{n}{2}$ essential intersections.  Choose orientations on $\n$ and $\g$ so that all essential intersections are coherently oriented.  Since $n \geq 4$, each of $\n_1$ and $\n_2$ intersects both $\g_1$ and $\g_2$ essentially.  This implies that $G_{\n}(\g)$ contains an edge connecting $\n_1^+$ to $\n_2^+$ and $G_{\g}(\n)$ contains an edge connecting $\g_1^+$ to $\g_2^+$.  If $\n_1 \cap \g_1$ contains an inessential point of intersection which is oriented opposite the essential intersections, then there is an arc $\n_1^* \subset \n_1$ connecting an inessential point of $\n_1 \cap \g_1$ and an essential point of $\n_1 \cap \g_2$.  Thus, $G_{\g}(\n)$ contains an edge connecting $\g_2^+$ to $\g_1^-$, and, by the arguments in Lemma \ref{sat}, $G_{\g}(\n)$ does not contain a wave.  Similarly, $\g_1$ contains an arc connecting an inessential point of $\n_1 \cap \g_1$ to an essential point of $\n_2 \cap \g_1$, thus $G_{\n}(\g)$ does not contain a wave by the same reasoning.  A parallel argument shows that if $\n_2 \cap \g_2$ contains an inessential intersection oriented opposite the essential intersections, then neither $G_{\g}(\n)$ nor $G_{\n}(\g)$ contains a wave, so by Theorem \ref{wave}, $(\n,\g)$ is not a Heegaard diagram for $S^3$.

Therefore, we may assume that all points of $\n \cap \g$ are coherently oriented, so that $W_i \geq 0$ for $i=1,2$ and $(\n,\g)$ is a positive Heegaard diagram.  If $W_2 = 1$, then $\io(\n_2,\g_1) = \io(\n_2,\g_2) = \frac{n}{2} > 1$.  But this implies that $\frac{n}{2}$ divides $\text{det}(M(\n,\g))$, so that $M(\n,\g) \neq S^3$ by Lemma \ref{determ}, a contradiction.  It follows that $W_2 = 0$.

A similar argument shows that $W_1 \neq 1$.  If $W_1 \geq 2$, let $a$ be a seam of slope $\frac{0}{1}$, let $\A_3 = \Delta_a$, and let $\n_3$, and $\g_3$ be the $0$--replacements for $\n$ and $\g$, respectively, as in Figure \ref{fig:sec6_even_odd}.  In addition, let $\A' = \{\A_1,\A_3\}$, $\n' = \{\n_2,\n_3\}$, and $\g' = \{\g_2,\g_3\}$, so that $\io(\A',\n') = \io(\A',\g') = 2$.  Using Lemma \ref{replace}, we compute
\begin{eqnarray*}
\io(\n',\g') &=& \io(\n_2,\g_2) + \io(\n_2,\g_3) + \io(\n_3,\g_2) + \io(\n_3,\g_3) \\
&=& \left(\frac{n}{2} - 1\right) +  1 + 1 + W_1(\n_1) - 2 \\
&=& \frac{n}{2} + W_1(\n_1) - 1.
\end{eqnarray*}
In addition, we have
\begin{eqnarray*}
\io(\n,\g) &=& \io(\n_1,\g_1) + \io(\n_1,\g_2) + \io(\n_2,\g_1) + \io(\n_2,\g_2) \\
&=& \left(\frac{n}{2} - 1 + W_1(\n_1)\right) +  \frac{n}{2} + \frac{n}{2} + \left(\frac{n}{2} - 1\right) \\
&=& 2n + W_1(\n_1) - 2.
\end{eqnarray*}
By the minimality of $(\A,\n,\g)$, we have
\begin{equation}\label{smallwind}
2n + W_1(\n_1) - 2 \leq \frac{n}{2} + W_1(\n_1) - 1
\end{equation}
and thus $n \leq \frac{2}{3}$, a contradiction.

The only remaining possibility in this case is that $W_1 = 0$.  However, this implies that the intersection matrix of $(\n,\g)$ is
\[ M(\n,\g) = \begin{pmatrix}
\frac{n}{2} - 1 & \frac{n}{2} \\
\frac{n}{2} & \frac{n}{2} - 1
\end{pmatrix}.\]
By Lemma \ref{determ}, we have $|\text{det}(M(\n,\g))| = |n-1| = 1$, and so $n = 0$ or $n=2$, contradicting the assumption that $n \geq 4$.

Now suppose that $n=2$.  In this case, intersection matrix for $(\n,\g)$ is
$$ M(\n,\g) =  \begin{pmatrix}
W_1 & 1 \\
1 & W_2 \end{pmatrix}.$$
It follows from Lemma \ref{determ} that $W_1W_2 = 2$ or $W_1W_2 = 0$.  We may observe that if $W_1,W_2 \leq 0$, we may replace $\n$ and $\g$ with curves having the same slopes in $\Sigma_\A$ and nonnegative winding by applying $H \circ \varphi^{-1}$ to $\Sigma_\A$.   See Figure \ref{fig:sec6_automorphisms}.  Thus, suppose first that $W_2 = 1$, so that $W_1 = 2$.  In this case, there is a sequence of handle slides that reduces complexity.  To see this, let $a$ be a seam of slope $\frac{0}{1}$, let $\A_3 = \Delta_a$, and let $\n_3$, and $\g_3$ be the $0$--replacements for $\n$ and $\g$, respectively, as in Figure \ref{fig:0-replacements}(a).  In addition, let $\A' = \{\A_1,\A_3\}$, $\n' = \{\n_2,\n_3\}$, and $\g' = \{\g_2,\g_3\}$.  Then, $\io(\A',\n') = \io(\A',\g') = 2$, but $\io(\n',\g')=\io(\n,\g)-3=2$.  It follows that the $\mathcal C(\A',\n'\g')<\mathcal C(\A,\n,\g)$, a contradiction.


\begin{figure}[h!]
\centering
\includegraphics[scale = .70]{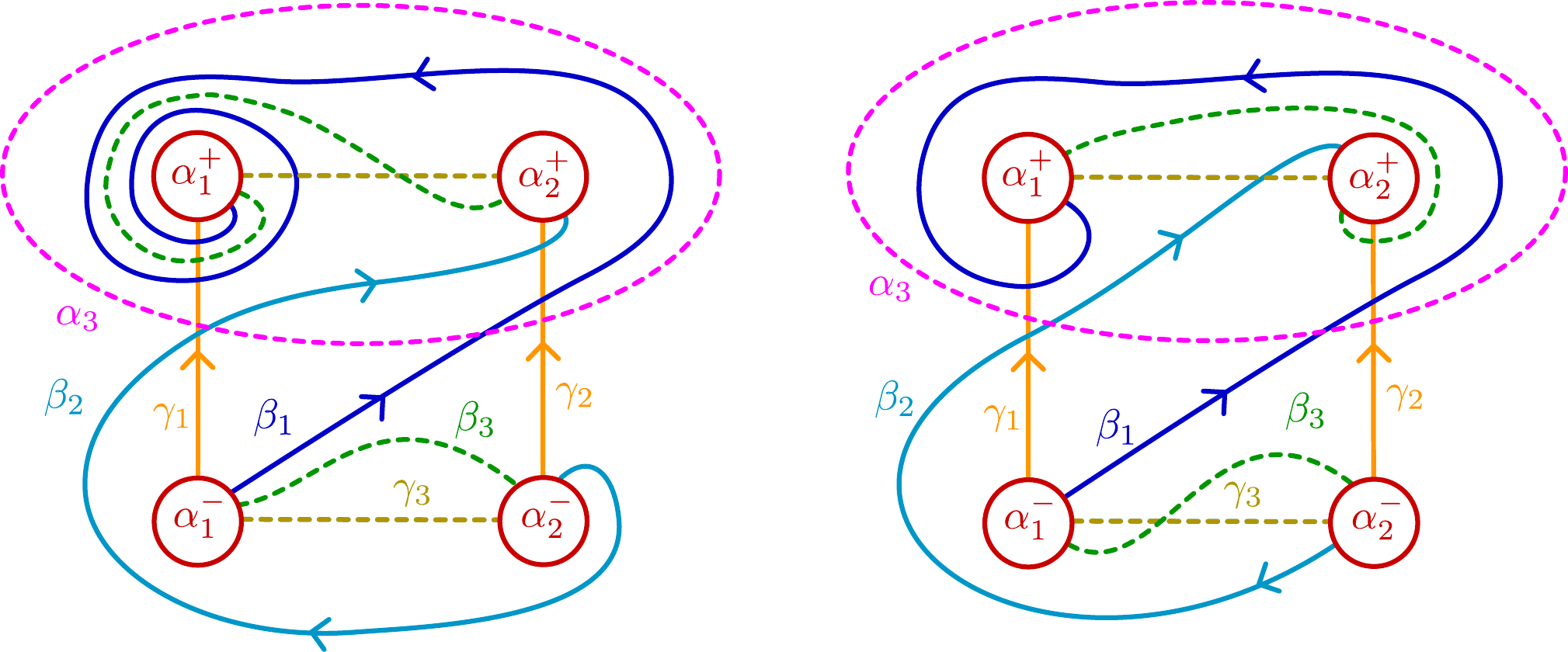}
\put(-320,-15){\large (a)}
\put(-103,-15){\large (b)}
\caption{The 0--replacements in the case where  the slope of the $\n$--arcs is $\frac{1}{2}$ and (a) $W_1=2$ and $W_2=1$ and (b) $W_1=1$ and $W_2=0$.}
\label{fig:0-replacements}
\end{figure}

In the other remaining case, suppose that $W_2 = 0$.  If $W_1 \geq 2$, we note that Inequality (\ref{smallwind}) is true in this case as well, and thus $n \leq \frac{2}{3}$ by above arguments, a contradiction.  If $W_1 = 1$, then as above we perform handle slides in $\A$, $\n$, and $\g$ to reduce complexity:  Let $a$ be a seam of slope $\frac{0}{1}$, let $\A_3 = \Delta_a$, and let $\n_3$, and $\g_3$ be the $0$--replacements for $\n$ and $\g$, respectively, as in Figure \ref{fig:0-replacements}(b).  In this case, the handle slides are slightly different than above; let $\A' = \{\A_2,\A_3\}$, $\n' = \{\n_1,\n_3\}$, and $\g' = \{\g_1,\g_3\}$.  Then $\io(\A',\n') =\io(\A',\g') = 2$, but $\io(\n',\g') = \io(\n, \g) - 1 = 2$, contradicting that $(\A,\n,\g)$ is minimal.  The only remaining case when $n = 2$ is that $W_1 = W_2 = 0$; thus $(\A,\n,\g)$ is the standard diagram in Figure \ref{fig:StdDiags}(a) (with $\n$ and $\g$ reversed). 


Finally, suppose that $n = 0$, so that both the $\n$ and the $\g$ arcs have the same slope.  In this case
$$ M(\n,\g) =  \begin{pmatrix}
W_1 & 0 \\
0 & W_2 \end{pmatrix}$$
and thus $W_1,W_2 = \pm 1$ by Lemma \ref{determ}.  It follows that $(\A,\n,\g)$ is homeomorphic to one of the standard diagrams in Figures \ref{fig:StdDiags}(b) and \ref{fig:StdDiags}(c). (If $W_1=W_2=-1$, then apply $H$ to recover Figure \ref{fig:StdDiags}(b).)
\end{proof}
\end{theorem}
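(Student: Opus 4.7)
The plan is to apply Proposition \ref{claim1} to reduce immediately to the case $\io(\A,\n) = \io(\A,\g) = 2$, and then show $\io(\n,\g) = 2$ by a careful analysis of the combinatorial picture in $\Sigma_\A(\n,\g)$. Under these hypotheses, each of the four curves in $\n \cup \g$ contributes exactly one arc to $\Sigma_\A$; choose coordinates so that the $\g$--arcs have slope $\tfrac{1}{0}$ and the $\n$--arcs have common slope $\tfrac{m}{n}$ with $m$ odd. Apply Lemma \ref{smallslope} to normalize $-\tfrac{1}{2} < \tfrac{m}{n} \le \tfrac{1}{2}$, and apply Lemma \ref{bigm} to conclude $m = 1$ (otherwise $(\n,\g)$ cannot be a Heegaard diagram for $S^3$). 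Introduce the signed winding numbers $W_1, W_2$, which are now integers since $\io(\A_i, \g) = 1$, and apply Lemma \ref{muchwind} so that we may assume $|W_2| \le 1$.

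The core of the argument is a case split on $n$. When $|n| \ge 4$, first show that $(\n,\g)$ must be a positive Heegaard diagram: if any inessential intersection of $\n_i \cap \g_i$ is oriented opposite the essential intersections, then wave-busting subarcs appear in both $\n$ and $\g$, so by Lemma \ref{sat} neither $G_\n(\g)$ nor $G_\g(\n)$ contains a wave, contradicting Theorem \ref{wave}. Once positivity is in hand, compute the intersection matrix $M(\n,\g)$ directly from Lemma \ref{inter} and apply Lemma \ref{determ}: the cases $W_2 = 1$ and $W_1 = 1$ each force $\tfrac{n}{2}$ to divide $\det M(\n,\g)$, a contradiction. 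For $W_1 \ge 2$ (with $W_2 = 0$), introduce $\A_3 = \Delta_a$ for a seam $a$ of slope $\tfrac{0}{1}$ together with the $0$--replacements $\n_3, \g_3$, and use Lemma \ref{replace} to compute $\io(\n', \g') = \tfrac{n}{2} + W_1 - 1$ while $\io(\n,\g) = 2n + W_1 - 2$; minimality then forces $n \le \tfrac{2}{3}$, a contradiction. The remaining subcase $W_1 = W_2 = 0$ gives $|\det M| = |n-1| = 1$, forcing $n \in \{0,2\}$.

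For $n = 2$, the intersection matrix is $\bigl(\begin{smallmatrix} W_1 & 1 \\ 1 & W_2 \end{smallmatrix}\bigr)$, so $W_1 W_2 \in \{0, 2\}$ by Lemma \ref{determ}. Using $H \circ \varphi^{-1}$, reduce to the case of nonnegative windings, and then rule out $(W_1, W_2) = (2,1)$ and $(W_1, W_2) = (1,0)$ via complexity-reducing handle slides on $\A, \n, \g$ using appropriately chosen $0$--replacements (the case $W_1 \ge 2$ is subsumed by the inequality from the $|n| \ge 4$ argument). The surviving case $W_1 = W_2 = 0$ yields the standard diagram of Figure \ref{fig:StdDiags}(a). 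Finally, for $n = 0$ the matrix is diagonal with entries $W_1, W_2$, so Lemma \ref{determ} forces $W_i = \pm 1$; up to applying the reflection $H$, this recovers the standard diagrams of Figures \ref{fig:StdDiags}(b) and \ref{fig:StdDiags}(c).

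The main obstacle is bookkeeping in the $n = 2$ analysis: one must produce explicit $0$--replacements and handle slides that simultaneously preserve $\io(\A', \n') = \io(\A', \g') = 2$ while strictly lowering $\io(\n', \g')$, so that $\calc(\A', \n', \g') < \calc(\A, \n, \g)$ contradicts minimality. This requires the precise intersection counts $\io(\n_2, \g_3) = \io(\n_3, \g_2) = 1$ and $\io(\n_3, \g_3) = W_1 - 2$ provided by Lemma \ref{replace}, together with the delicate choice (e.g.\ $\A' = \{\A_2, \A_3\}$ rather than $\{\A_1, \A_3\}$) dictated by the parity and value of $W_1$.
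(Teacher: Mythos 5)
Your proposal is correct and follows essentially the same route as the paper's proof: reduce via Proposition \ref{claim1}, normalize the $\n$--slope to $\tfrac{1}{n}$ using Lemmas \ref{smallslope} and \ref{bigm}, and then run the identical case analysis on $n$ and the integer windings $W_1, W_2$ using positivity, Lemma \ref{determ}, and the $0$--replacement handle slides against minimality. No substantive differences to report.
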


\section{Cosmetic surgery, trisections, and exotic 4--manifolds}\label{sec:surgery}

In the final section, we complete the proof of Theorem \ref{thm:main} by proving Proposition \ref{21class}.  We also explain and prove Corollary \ref{cor:surgery}.

Let $L=K_1\cup\cdots\cup K_n$ be an $n$--component link in a compact 3--manifold $Y$ and let $T_i$ denote $\pd \overline{\nu(K_i)}$ in the exterior $Y_L$ of $L$, where $Y_L = Y \setminus \nu(L)$.  Suppose $\vec\rho=(\rho_1,\ldots, \rho_n)$ is a framing on $L$; that is, each $\rho_i$ is a boundary slope in $T_i$.  Let $Y_{\vec\rho}(L)$ denote the 3--manifold obtained by Dehn surgery on $L$ with slope $\vec\rho$.  We will discuss surgeries having the property that $Y_{\vec\rho}(L) \cong Y$.  Clearly, $Y_{(1/0,\dots,1/0)}(L) = Y$, and so this case is completely uninteresting.  However, surgeries with the property $Y_{\vec\rho}(L)\cong Y$ and $\rho_i\not=\frac{1}{0}$ for all $i$ are quite uncommon, and we call such a surgery \emph{cosmetic}.

Now, suppose that $X$ is a closed 4--manifold admitting a $(g,k)$--trisection.  Then $X$ has a handle decomposition consisting of one 0--handle, $k$ 1--handles, $g-k$ 2--handles, $k$ 3--handles, and one 4--handle.  Let $X^{(i)}$ denote the union of the handles of index at most $i$.  By \cite{laudenbach-poenaru}, there is a unique way to attach the union of 3--handles and the 4--handle to the boundary of $X^{(2)}$, so $\partial X^{(1)}\cong\partial X^{(2)}\cong \#^k(S^1\times S^2)$. Let $L$ denote the union of the attaching circles of the 2--handles.  Then $\partial X^{(2)}$ is obtained from surgery on $L$ in $\partial X^{(1)}$, and thus $L$ is a $(g-k)$--component link in $\#^k(S^1\times S^2)$ with a cosmetic surgery.

We use this set-up to conclude our analysis of genus two trisections.

\subsection{(2,1)--trisections are standard}\ 

If $X$ admits a (2,1)--trisection, then $X$ has a decomposition with one 1--handle, one 2--handle, and one 3--handle.  The 2--handle, call it $h$, is attached along a knot $L$ in $S^1\times S^2$, the framing of $h$ gives a cosmetic surgery slope for $L$ in $S^1 \X S^2$.  However, by work of Gabai, $S^1\times S^2$ admits no nontrivial cosmetic surgeries \cite{gabai-I}.  More specifically, we have the following theorem.

\begin{theorem}\cite{gabai-I}\label{cosm}
Suppose $L$ is a knot in $S^2 \X S^1$ with a cosmetic surgery.  Then $L$ is a $(\pm 1)$--framed unknot.
\end{theorem}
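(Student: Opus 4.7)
The plan is to translate the cosmetic surgery data to a question about integer surgery on a two-component link in $S^3$, extract a homological obstruction, and then invoke Gabai's sutured manifold theory to promote the homological conclusion to a geometric one. First I would present $S^1 \times S^2$ as $0$-surgery on an unknot $U \subset S^3$. A cosmetic surgery of slope $\rho$ on $L \subset S^1 \times S^2$ yielding $S^1 \times S^2$ then corresponds to a surgery description of $S^1 \times S^2$ via the two-component link $L \cup U \subset S^3$ with slopes $(\rho,0)$. For rational $\rho = p/q$, a preliminary reduction converts to integer surgery by augmenting $L$ with an unknotted chain; for exposition I treat the integer case below.

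Next I would perform a homological analysis. Setting $\ell := \mathrm{lk}(L,U)$, the linking matrix of $L \cup U$ with slopes $(\rho,0)$ is
\[ M = \begin{pmatrix} \rho & \ell \\ \ell & 0 \end{pmatrix}. \]
The first homology of the surgered 3-manifold is $\mathbb{Z}^2/M\mathbb{Z}^2$, and requiring this to equal $H_1(S^1 \times S^2) \cong \mathbb{Z}$ forces $\det M = -\ell^2 = 0$, so $\ell = 0$; the matrix then degenerates to $\mathrm{diag}(\rho, 0)$, whose cokernel is $(\mathbb{Z}/\rho\mathbb{Z}) \oplus \mathbb{Z}$, forcing $\rho = \pm 1$. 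The vanishing of $\ell$ tells us that $L$ is null-homologous in $S^1 \times S^2$, so $L$ bounds a Seifert surface $\Sigma$; moreover, the Seifert framing coming from $\Sigma$ agrees with the Seifert framing of $L$ viewed as a knot in $S^3$, so the conclusion $\rho = \pm 1$ is the correct intrinsic framing statement in $S^1 \times S^2$.

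The main obstacle is the Property R style step: showing that $L$ must actually be an unknot in $S^1 \times S^2$. I would follow Gabai's original approach to Property R in $S^3$: decompose the exterior $E(L) = (S^1 \times S^2) \setminus \nu(L)$ along a minimal-genus Seifert surface $\Sigma$ to produce a taut sutured manifold, and construct a taut sutured manifold hierarchy. This yields a taut foliation $\mathcal{F}$ on $E(L)$ whose leaves meet $\partial E(L)$ in longitudes. Gabai's rigidity theorems for such foliations impose strong constraints on Dehn fillings: under the assumption that the $\pm 1$-filling along the Seifert longitude yields $S^1 \times S^2$, the only way to avoid producing a non-trivial essential surface in the filling is for $\Sigma$ to have genus zero. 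Hence $L$ bounds a disk, i.e., is unknotted, and the proof is complete.
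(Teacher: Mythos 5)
First, a point of order: the paper does not prove this statement; it is quoted verbatim from Gabai, so there is no internal argument to compare yours against. Your homological step is correct and standard: requiring the cokernel of the linking matrix to be $\mathbb{Z}$ forces $\ell=\mathrm{lk}(L,U)=0$, hence $L$ is null-homologous, and then forces $\rho=\pm1$. The genuine gap is in your final step, in two places. Gabai's sutured manifold hierarchy, and the taut foliation it produces, requires the exterior $E(L)=(S^1\times S^2)\setminus\nu(L)$ to be irreducible, and you never address this. Irreducibility fails exactly when $L$ lies in a $3$--ball (or in the complement of a non-separating sphere), and in that case the surgered manifold is $(S^1\times S^2)\,\#\,S^3_{\pm1}(L')$ for the corresponding knot $L'\subset S^3$; the statement then becomes ``$(\pm1)$--surgery on a knot in $S^3$ yielding $S^3$ implies the knot is trivial,'' which is the Gordon--Luecke knot complement theorem and is not reachable by the foliation argument you describe. (Gabai's foliation techniques prove Property R, i.e.\ the slope--$0$ case; the $\pm1$ case resisted exactly these methods.)

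Second, even when $E(L)$ is irreducible, the foliation obtained from a hierarchy that starts at a minimal genus Seifert surface meets $\partial E(L)$ in longitudes, and therefore extends to a taut foliation only of the $0$--filling, by capping the boundary foliation with meridian disks. It imposes no constraint on the $\pm1$--filling: a longitudinal boundary foliation does not extend tautly over a solid torus glued along slope $\pm1$, and indeed the Seifert class does not even survive to $H_2$ of the $\pm1$--filled manifold. The class you should feed into Gabai's machinery is the closed class $[\mathrm{pt}\times S^2]\in H_2(E(L))$, which exists precisely because $L$ is null-homologous: Gabai's theorem that at most one Dehn filling of an irreducible manifold with torus boundary can degenerate a fixed closed homology class (reduce its Thurston norm, or represent it by a sphere when the unfilled manifold cannot) gives an immediate contradiction, since both the trivial filling and the cosmetic filling are $S^1\times S^2$ and represent this class by a sphere. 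So the repair is: replace the Seifert-surface hierarchy by the degenerating-fillings theorem applied to the closed class to rule out the irreducible case, and invoke Gordon--Luecke (or an equivalent) for the reducible case.
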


A Heegaard splitting of a compact 3--manifold $Y$ with boundary is a decomposition of $Y$ as $C_1\cup_{\Sigma} C_2$, where $C_1$ and $C_2$ are compression bodies.  We will require the following generalization of Haken's Lemma.

\begin{lemma}\cite{haken}\label{haken}
Let $Y$ be a compact 3--manifold with a Heegaard splitting $Y = C_1 \cup_{\Sigma} C_2$, and suppose that $D$ is a properly embedded disk in $Y$.  Then there exists a disk $D' \subset Y$ such that $\pd D' = \pd D$ and $D' \cap \Sigma$ is a single simple closed curve.
\end{lemma}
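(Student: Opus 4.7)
The plan is to carry out the standard innermost--disk complexity--reduction argument that underpins Haken's original lemma, adapted from the sphere setting to a properly embedded disk in a 3--manifold with boundary. The goal is to select a representative of the isotopy class of $D$ (rel $\pd D$) minimizing intersection with $\Sigma$ and to show that minimum must be a single curve.

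First I would isotope $D$ to be transverse to $\Sigma$. Since $\pd D \subset \pd Y$ is disjoint from $\Sigma$, the intersection $D \cap \Sigma$ is a disjoint union of simple closed curves. Among all properly embedded disks $D''$ with $\pd D'' = \pd D$ transverse to $\Sigma$, I would choose one minimizing $|D'' \cap \Sigma|$. A standard innermost--disk exchange then eliminates inessential intersections: if some curve $c$ of $D \cap \Sigma$ bounds a disk in $\Sigma$, pick one innermost on $\Sigma$ bounding $F \subset \Sigma$ with $\mathrm{int}(F) \cap D = \emp$, and pair it with an innermost subdisk $E \subset D$. Their union $E \cup F$ is a $2$--sphere in one compression body, which is irreducible, hence bounds a $3$--ball; pushing $E$ across this ball strictly reduces $|D \cap \Sigma|$, contradicting minimality. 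Thus every component of $D \cap \Sigma$ is essential on $\Sigma$.

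Next, assuming $|D \cap \Sigma| \geq 2$ for contradiction, take an innermost subdisk $E \subset D$ contained in $C_i$, with $c = \pd E$ essential on $\Sigma$, and let $A \subset D \cap C_{3-i}$ be the annular region of $D \setminus \Sigma$ adjacent to $E$, so $\pd A = c \sqcup c'$. The main step is to trade $A$ for a disk in $C_{3-i}$: fix a complete meridian system $\{\Delta_j\}$ for $C_{3-i}$, analyze $A \cap (\bigcup_j \Delta_j)$ by removing innermost circles via the same irreducibility argument as above, and then use an outermost arc in some $\Delta_j$ to band $c$ to $c'$ across $\Sigma$ along a subdisk of $\Delta_j$ disjoint from the rest of $D$. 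Combined with a parallel copy of $E$, this band move should produce an ambient isotopy of $D$ that eliminates both $c$ and $c'$, strictly dropping $|D \cap \Sigma|$. Iterating terminates with a disk meeting $\Sigma$ in a single curve, as claimed.

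The hard part will be verifying that the outermost--arc band move lifts to a genuine ambient isotopy of $D$ (rel $\pd D$) rather than merely producing a new disk in the same rel--boundary cobordism class with fewer intersections. The compression--body hypothesis is used precisely here: the product structure on the collar of $\pd_- C_{3-i}$, together with the uniqueness up to isotopy of disk systems for compression bodies, should identify the $3$--ball bounded by the sphere assembled from $E$, its push--off, and the two subdisks extracted from $A$ and $\Delta_j$, whose interior furnishes the desired ambient isotopy. Bookkeeping the iteration so that the terminal intersection pattern has exactly one curve rather than zero will use the fact that $\pd D$ is nontrivially carried on $\pd Y = \pd_- C_1 \cup \pd_- C_2$ in the configurations to which the lemma is applied in the paper, ruling out the degenerate case of $D$ isotoping entirely into one compression body.
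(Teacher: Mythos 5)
The paper does not actually prove this lemma; it is quoted as a known result and attributed to \cite{haken} (this boundary-reducing form of Haken's Lemma, with the boundary of the disk preserved, is the Haken/Casson--Gordon/Jaco statement). So there is no in-paper argument to compare yours against, and the question is only whether your sketch would stand on its own. The first half is fine: minimizing $|D''\cap \Sigma|$ over disks with $\pd D'' = \pd D$ and using innermost-disk exchanges (compression bodies are irreducible) to remove curves of $D\cap\Sigma$ that are inessential in $\Sigma$ is standard and correct, modulo the small point that the curve you surger along should be chosen innermost in $\Sigma$ so that the swapped-in disk $F$ is disjoint from the rest of $D$, and that a cut-and-paste (replace the subdisk of $D$ bounded by $c$ with $F$ and push off) already contradicts minimality --- you do not need an ambient isotopy anywhere in this proof, so the concern you flag in your last paragraph is a red herring.

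The genuine gap is in the second half, which is the actual content of Haken's Lemma. First, the component of $D\setminus\Sigma$ adjacent to an innermost subdisk $E$ is a planar surface that need not be an annulus once $|D\cap\Sigma|\geq 3$, so the object $A$ with $\pd A = c\sqcup c'$ may not exist. Second, even granting an annulus, an outermost arc of $A\cap\bigl(\bigcup_j \Delta_j\bigr)$ in some $\Delta_j$ may have both endpoints on the same component of $\pd A$; the resulting boundary compression then splits $A$ into two pieces rather than banding $c$ to $c'$, and it is not clear that the reassembled surface is a single disk with boundary $\pd D$ and strictly fewer intersection circles. Controlling this --- by a careful choice of the complete disk system, an induction on a finer complexity than $|D\cap\Sigma|$, and an analysis of how $D$ meets the $1$--handles of $C_{3-i}$ --- is precisely what makes the published proofs (Haken, Jaco's book, Casson--Gordon, Scharlemann's survey) several pages long; ``this band move should produce\dots'' is asserting the theorem rather than proving it. Finally, the statement requires $D'\cap\Sigma$ to be exactly one curve, not zero; if the process terminates with $D'\cap\Sigma=\emp$ (possible when $\pd D$ is inessential in $\pd Y$ or $D$ isotopes into one compression body), one must either invoke the essentialness of $\pd D$ in the intended application or add a trivial intersection circle by a small finger move, and your sketch leaves this to an unproved claim about the configurations arising in the paper.
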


For two distinct trisections (of arbitrary genus) $X' = X'_1 \cup X'_2 \cup X'_3$ and $X''= X_1'' \cup X_2'' \cup X_3''$, there is a natural trisection of the connected sum $X' \# X''$.  The easiest way to see this is to take the connected sum of trisection diagrams $(\Sigma',\A',\n',\g')$ and $(\Sigma'',\A'',\n'',\g'')$, which yields an induced trisection diagram $(\Sigma' \# \Sigma'',\A' \cup \A'',\n' \cup \n'',\g' \cup \g'')$ for the natural trisection of $X' \# X''$.  Note that the induced diagram has the property that there is a separating curve in $\Sigma = \Sigma' \# \Sigma''$ which bounds a compressing disk in each of the three 3--dimensional handlebodies determined by the three sets of attaching curves.  In this case we call the trisection \emph{reducible}.  Conversely, if $X = X_1 \cup X_2 \cup X_3$ is a reducible trisection, then it can be written as the connected sum of trisections of 4--manifolds $X'$ and $X''$ such that $X = X' \# X''$.  For more details, see \cite{gay-kirby:trisections}.

In the following proposition, we show that every $(2,1)$--trisection is reducible, and, as such, can be written as the conected sum of a $(1,0)$-- and a $(1,1)$--trisection.  Since the only $X'$ admitting a $(1,0)$--trisection are $\CP^2$ and $\overline{\CP}^2$, the only $X''$ admitting a $(1,1)$--trisection is $S^1 \X S^3$, and the genus one trisections of these manifolds are unique up to diffeomorphism \cite{gay-kirby:trisections}, this completes the classification of $(2,1)$--trisections.

\begin{proposition}\label{21class}
Every $(2,1)$--trisection is reducible.
\begin{proof}
Let $X = X_1 \cup X_2 \cup X_3$ be a $(2,1)$--trisection with trisection surface $\Sigma$.  As described above, $X$ admits a handle decomposition with one 1--handle, one 2--handle which we will call $h$, and one 3--handle.  Let $Y = \pd X_1 = \partial X^{(1)} \cong S^2 \X S^1$.  By Lemma 13 of \cite{gay-kirby:trisections}, there is a trisection diagram $(\A,\n,\g)$ for the trisection so that $(\A,\n)$ is a Heegaard diagram for $Y \cong S^2 \X S^1$, the Heegaard diagram $(\A,\g)$ satisfies $\io(\A_1,\g_1) = 1$ and $\A_2 = \g_2$, and $h$ is attached to $Y$ along $\g_1$ with framing given by the surface framing of $\g_1$ in $\Sigma$.

Let $H_{12} = X_1 \cap X_2$ and $H_{13} = X_1 \cap X_3$ be the 3--dimensional handlebodies determined by $\A$ and $\n$, respectively, so that $H_{23} = X_2 \cap X_3$ is the handlebody determined by $\g$.  We will show that there is a separating curve $\delta \subset \Sigma$ which bounds a disk in each of $H_{12}$, $H_{13}$, and $H_{23}$.  By Theorem \ref{cosm}, $\g_1$ bounds a disk in $Y = H_{12} \cup_{\Sigma} H_{13}$, and $\g_1$ has surface framing $\pm 1$ in $\Sigma$.  Since $\io(\A_1,\g_1) = 1$ and $\A_1$ bounds a disk in $H_{12}$, a positive or negative Dehn twist of $\g_1$ about $\A_1$ yields a curve $\n_*$ which has surface framing zero and is isotopic in $H_{12}$ to $\gamma_1$.  Note that $\n_*$ is also isotopic to a core of $H_{12}$ and that $\io(\A_1,\n_*) = \io(\g_1,\n_*) = 1$.

Since $\n_*$ is a 0--framed unknot in $\Sigma$, there is an embedded disk $D \subset Y$ such that $\pd D = \n_*$ and a collar neighborhood of $\pd D$ is disjoint from $\Sigma$.  Let $D'$ be the image of $D$ under an isotopy which pushes $\n_*$ slightly into the interior of $H_{12}$, and let $\n' = \pd D'$.  Then $\n'$ is a core of $H_{12}$ and $C = H_{12} \setminus \nu(\n')$ is a compression body.   In addition, let $T = \pd \overline{\nu(\n')}$, so that $D \setminus \nu(\n')$ is a compressing disk for $Y \setminus \nu(\n')$ with boundary in $T$.  Then $Y \setminus \nu(\n') = C \cup_{\Sigma} H_{13}$ is a Heegaard splitting satisfying the hypotheses of Lemma \ref{haken}.  It follows that there is a disk $D_0$ properly embedded in $Y \setminus \nu(\n')$ such that $\pd D_0 = D' \cap T$ and such that $D_0$ intersects $\Sigma$ in a single simple closed curve $\n_0$.  This implies that $\n_0$ bounds a disk in $H_{13}$. 

The compression body $C$ may be viewed as the union of product neighborhood $T \X I$, where $T = T \X \{0\}$, and a three--dimensional 1--handle attached along $T_1 = T \X \{1\}$ whose cocore is a disk bounded by $\A_2$ in $H_{12}$.  By construction, there are annuli $A' \subset D' \cap C$ and $A_0 \subset D_0 \cap C$ such the boundary of $A'$ in $T$ agrees with the boundary of $A_0$ in $T$, while $\pd A' \cap \Sigma = \n_*$ and $\pd A_0 \cap \Sigma  = \n_0$.  Moreover, since $T$ is incompressible in $C$, these annuli are incompressible.  By a standard innermost disk and outermost arc argument, each annulus may be isotoped into $T \X I$ so that it is vertical with respect to the product structure $T \X I$.  It follows that, after isotopy, $\n_*$ and $\n_0$ are disjoint and thus isotopic in $T_1$, although (depending on where the 1--handle is attached) $\n_*$ and $\n_0$ may not be isotopic in $\Sigma = \pd_+ C$.  See Figure \ref{fig:ThreeFrames}(a).

\begin{figure}[h!]
\centering
\includegraphics[scale = .55]{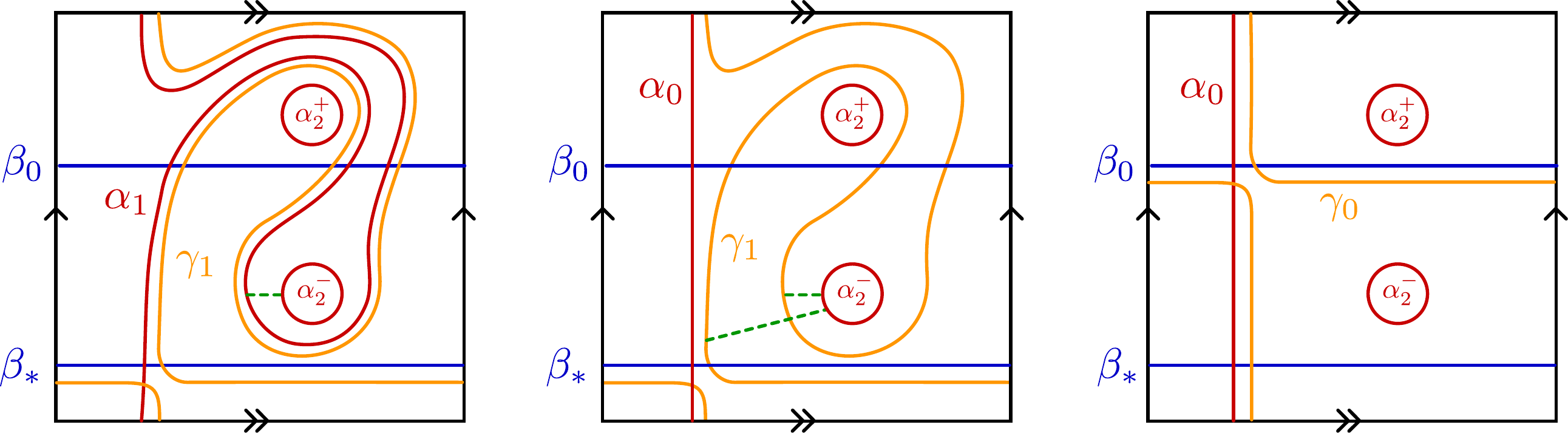}
\put(-353,-20){\large (a)}
\put(-210,-20){\large (b)}
\put(-66,-20){\large (c)}
\caption{Three pictures of $T_1\cup$(1--handle) showing wave moves taking $\A_1$ and $\g_1$ to $\A_0$ and $\g_0$.}
\label{fig:ThreeFrames}
\end{figure}

Since $\n_*$ is the result of Dehn twisting $\g_1$ around $\A_1$, we have $\nu(\A_1 \cup \n_*) = \nu(\A_1 \cup \g_1)$.  The same may not be true for $\A_1$ and $\g_1$ with respect to $\n_0$; in fact it is possible that $\io(\A_1,\n_0),\io(\g_1,\n_0) > 1$.  See Figure \ref{fig:ThreeFrames}(a).  However, there are curves $\A_0$ and $\g_0$ isotopic to $\A_1$ and $\g_1$ in $T_1$ such that $\io(\A_0,\n_0) = \io(\A_0,\g_0) = \io(\n_0,\g_0) = 1$ and $\n_0$ is the result of Dehn twisting $\g_0$ around $\A_0$.  Moreover, by virtue of this isotopy, there is a sequence of handle slides over $\A_2$ taking $\A_1$ to $\A_0$.  Likewise, there is a sequence of handle slides over $\g_2 = \A_2$ taking $\g_1$ to $\g_0$, as in Figures \ref{fig:ThreeFrames}(b) and \ref{fig:ThreeFrames}(c).  We conclude that $\A_0$, $\n_0$, and $\g_0$ bound disks in the handlebodies determined by $\A$, $\n$, and $\g$.

Let $\delta = \pd \overline{\nu(\A_0 \cup \g_0)}$.  Since $\A_0$ bounds a disk $\Delta \subset H_{12}$, we have $\delta$ bounds a disk in $H_{12}$ consisting of two copies of $\Delta$ banded together along $\g_0$.  A similar argument shows that $\g_0$ bounding a disk in $H_{23}$ implies that $\delta$ bounds a disk in $H_{23}$ as well.  Finally, we note that $\delta$ is also equal to $\pd \overline{\nu(\A_0\cup \n_0)}$.  Thus, since $\n_0$ bounds a disk in $H_{13}$, the separating curve $\delta$ also bounds a disk in $H_{13}$.  It follows that the trisection is reducible, completing the proof.
\end{proof}
\end{proposition}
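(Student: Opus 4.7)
The plan is to exhibit a separating curve $\delta\subset\Sigma$ that bounds a compressing disk in each of the three handlebodies $H_{12}=X_1\cap X_2$, $H_{13}=X_1\cap X_3$, and $H_{23}=X_2\cap X_3$; this is exactly what reducibility demands. First I would invoke Lemma 13 of \cite{gay-kirby:trisections} to fix a trisection diagram $(\A,\n,\g)$ in which $(\A,\n)$ is a Heegaard diagram for $Y=\pd X_1\cong S^1\X S^2$, with $\A_2=\g_2$, $\io(\A_1,\g_1)=1$, and the unique 2--handle of the induced handle decomposition attached along $\g_1\subset Y$ with framing equal to the surface framing from $\Sigma$. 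Since capping off with the 3-- and 4--handle must yield a closed 4--manifold, \cite{laudenbach-poenaru} forces $\pd X^{(2)}\cong S^1\X S^2$, so the surgery on $\g_1$ is cosmetic and Theorem \ref{cosm} implies $\g_1$ is a $(\pm 1)$--framed unknot in $Y$.

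Next I would produce a curve on $\Sigma$ that compresses $H_{13}$. Since $\io(\A_1,\g_1)=1$ and $\A_1$ bounds a disk in $H_{12}$, a Dehn twist of $\g_1$ about $\A_1$ (with the appropriate sign) produces a new curve $\n_*\subset\Sigma$ that is isotopic to $\g_1$ inside $H_{12}$ but whose surface framing is $0$. Thus $\n_*$ is a $0$--framed unknot in $Y$, so it bounds a disk $D\subset Y$ whose collar at $\pd D$ is disjoint from $\Sigma$. Pushing $\n_*$ slightly into $H_{12}$ yields a parallel core $\n'$ of $H_{12}$, and drilling it out produces a Heegaard splitting $Y\setminus\nu(\n')=C\cup_\Sigma H_{13}$, where $C$ is a compression body with $\pd_- C=T=\pd\overline{\nu(\n')}$. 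The truncated disk $D\setminus\nu(\n')$ is a properly embedded disk in $Y\setminus\nu(\n')$ with boundary on $T$, and Lemma \ref{haken} delivers a disk $D_0$ with $\pd D_0=\pd(D\setminus\nu(\n'))$ such that $D_0\cap\Sigma$ is a single simple closed curve $\n_0$. Then $D_0\cap H_{13}$ is a compressing disk for $H_{13}$ bounded by $\n_0$.

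The remaining task is to align $\A_1$ and $\g_1$, via handle slides, to curves $\A_0$ and $\g_0$ so that $\A_0$, $\n_0$, and $\g_0$ pairwise intersect in a single point on $\Sigma$, with $\n_0$ obtained from $\g_0$ by Dehn twisting about $\A_0$. The key observation is that $C$ is $T\X I$ with a 3--dimensional 1--handle attached whose co-core is the disk in $H_{12}$ bounded by $\A_2$. The annuli $D_0\cap C$ and $D'\cap C$ (where $D'$ is the pushed-in $D$) share a boundary on $T$ and are incompressible, so a standard outermost-arc argument makes them vertical in the product region, showing that $\n_0$ and $\n_*$ are isotopic on the top copy $T_1\subset C$. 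Although they may fail to be isotopic on $\Sigma=T_1\cup(\text{1--handle})$, performing the corresponding handle slides of $\A_1$ and $\g_1$ over $\A_2=\g_2$ yields $\A_0$ and $\g_0$ with the desired intersection pattern. The curve $\delta=\pd\overline{\nu(\A_0\cup\g_0)}=\pd\overline{\nu(\A_0\cup\n_0)}$ is then a separating curve on $\Sigma$, and banding two parallel copies of the $\A_0$--disk (resp.\ the $\g_0$--disk, resp.\ the $\n_0$--disk) along $\g_0$ (resp.\ $\A_0$, resp.\ $\A_0$) yields compressing disks for $\delta$ in $H_{12}$, $H_{23}$, and $H_{13}$, respectively.

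I expect the main obstacle to be controlling the precise effect of the 1--handle of $C$ on the curves $\A_1$ and $\g_1$. Even though $\n_0$ and $\n_*$ are isotopic in $T_1$, the 1--handle attachment that rebuilds $\Sigma$ from $T_1$ can introduce extra intersections between $\n_0$ and $\{\A_1,\g_1\}$, and verifying that these can all be simultaneously removed by handle slides over $\A_2=\g_2$ while preserving the required intersection counts $\io(\A_0,\n_0)=\io(\A_0,\g_0)=\io(\n_0,\g_0)=1$ is the delicate bookkeeping step. Once this alignment is in place, the other steps are clean applications of Theorem \ref{cosm}, Lemma \ref{haken}, and the standard compression-body technology.
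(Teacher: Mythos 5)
Your proposal follows the paper's argument essentially verbatim: the same invocation of Lemma 13 of \cite{gay-kirby:trisections} and Gabai's theorem, the same Dehn twist producing the $0$--framed curve $\n_*$, the same drilling/Haken's Lemma step yielding $\n_0$, the same vertical-annulus argument in the compression body, and the same handle slides over $\A_2 = \g_2$ producing $\A_0$ and $\g_0$ with $\delta = \pd\overline{\nu(\A_0\cup\g_0)}$. The "delicate bookkeeping step" you flag is exactly the step the paper resolves with the isotopy in $T_1$ depicted in Figure \ref{fig:ThreeFrames}, so there is no substantive difference to report.
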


\subsection{Cosmetic 3--sphere surgeries and exotic 4--manifolds}\ 

Although neither $S^3$ nor $S^2 \X S^1$ admits a cosmetic surgery on a knot $L$, the situation is quite different when $L$ is a 2--component link, leading to the following natural question.

\begin{question}\label{question:cosmetic}
Which $2$--component links in 3--sphere admit a nontrivial cosmetic Dehn surgery?
\end{question}

Properties of these links are studied in \cite{mos} (in which they are called \emph{reflexive}), and Ochiai classifies 2--bridge links with cosmetic surgeries in \cite{ochiai:cosmetic}.

Suppose that $L \subset S^3$ is such a link, so that $L(\rho_1,\rho_2)\cong S^3$.  If $\rho_1,\rho_2\in\Z$, then there is an associated closed 4--manifold $X$ obtained by first attaching $(\rho_i)$--framed 4--dimensional 2--handles to $B^4$ along $L\subset S^3=\partial B^4$, and next capping off the resulting 3--sphere with a 4--dimensional 4--handle.  Since $X$ is simply-connected, a theorem of Whitehead tells us that the homotopy type of $X$ is determined by the symmetric, bilinear intersection form $Q_X$.  See \cite{gs} for complete details.  Since $X$ is closed, $Q_X$ is unimodular, and since $b_2(X)=2$, there are only three choices:
$$Q_X\in\left\{\begin{pmatrix} 1 & 0 \\ 0 & 1 \end{pmatrix}, \begin{pmatrix} 1 & 0 \\ 0 & -1 \end{pmatrix}, \begin{pmatrix} 0 & 1 \\ 1 & 0 \end{pmatrix}\right\}.$$
By Freedman \cite{freedman, freedman-quinn}, it follows that $X$ is homeomorphic to either $S^2\times S^2$, $\CP^2\#\CP^2$, or $\CP^2\#\overline\CP^2$.  Thus, one way to look for an exotic simply-connected 4--manifold $X$ with $b_2(X) = 2$ is to limit the search to those $X$ arising from a cosmetic surgery on a 2--component link $L \subset S^3$.


In this setting, we can apply the results of Theorem \ref{thm:main} to place restrictions on such a link.

\begin{lemma}\label{cosmeticlink}
Let $(\Sigma,\A,\n)$ be a genus two Heegaard diagram for $S^3$, and let $\g$ be a cut system for $\Sigma$ with surface framing $\vec{\rho}$.  Then $(\Sigma,\A,\n,\g)$ is a $(2,0)$-trisection diagram if and only if $L(\vec\rho) \cong S^3$.
\begin{proof}
Let $S^3 = H_\A \cup_{\Sigma} H_{\n}$ be the Heegaard splitting determined by $(\Sigma,\A,\n)$, and note that we may construct $L(\vec\rho)$ by first gluing two 3--dimensional 2--handles to $H_{\A}$ along $\g$ to get a compact 3--manifold $H_\A(\g)$ and gluing two 3--dimensional 2--handles to $H_\n$ along $\g$ to get $H_{\n}(\g)$, followed by attaching $H_{\A}(\g)$ to $H_{\n}(\g)$ along their common 2-sphere boundary.  In addition, $H_{\A}(\g)$ is a 3--ball if and only if $(\Sigma,\A,\n)$ is a Heegaard diagram for $S^3$, and a similar statement holds for $H_{\n}(\g)$.  Finally, $L(\vec\rho) = H_{\A}(\g) \cup H_{\n}(\g) \cong S^3$ if and only if both $H_{\A}(\g)$ and $H_{\n}(\g)$ are 3--balls, completing the proof.
\end{proof}
\end{lemma}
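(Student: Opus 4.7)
The plan is to realize $L(\vec\rho)$ as the union of the two handlebody-fillings of $\Sigma$ along $\g$, and then identify when each half is a ball. I would first argue the following realization: if a link $L$ sits on a Heegaard surface $\Sigma \subset S^3$ with surface framing $\vec\rho$, and $S^3 = H_\A \cup_\Sigma H_\n$, then $L(\vec\rho) \cong H_\A(\g) \cup H_\n(\g)$ glued along their common boundary, where $H_\A(\g)$ denotes the result of attaching 3--dimensional 2--handles to $H_\A$ along $\g \subset \partial H_\A$, and similarly for $H_\n(\g)$. The point is that in a bicollar $\Sigma \times [-1,1]$ of $\Sigma$, the solid torus neighborhood $\nu(\g_i)$ decomposes naturally as two $D^2 \times I$ pieces on the two sides of $\Sigma$, and the surface-framed surgery precisely replaces these pieces with 3--dimensional 2--handles on each side.

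Next I would use the cut-system hypothesis to see that $\partial H_\A(\g)$ and $\partial H_\n(\g)$ are 2--spheres. Since $\g$ cuts $\Sigma$ into a single 4--punctured sphere $\Sigma_\g$, attaching 2--handles along $\g$ fills in the four boundary circles with disks (two per handle, one on each side of the cut), producing a closed genus zero surface. Hence $L(\vec\rho)$ is the closed 3--manifold obtained by gluing $H_\A(\g)$ and $H_\n(\g)$ along $S^2$.

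The central observation is then: $H_\A(\g)$ is a 3--ball if and only if $(\Sigma,\A,\g)$ is a Heegaard diagram for $S^3$ (and similarly on the $\n$--side). Indeed, the diagram $(\Sigma,\A,\g)$ presents the closed manifold $H_\A \cup_\Sigma H_\g$, and a genus two handlebody $H_\g$ decomposes as a collar $\Sigma \times I$ with 2--handles attached along $\g$, capped off by a single 3--ball. Hence $H_\A \cup_\Sigma H_\g \cong H_\A(\g) \cup B^3$, which is $S^3$ precisely when $H_\A(\g) \cong B^3$.

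Finally, because two compact 3--manifolds with $S^2$ boundary glue to $S^3$ if and only if each is a 3--ball (by the Alexander trick, or by noting that the only prime closed 3--manifold decomposing as such a union has trivial $\pi_1$, forcing each piece to be simply connected with $S^2$ boundary, hence a ball), we conclude $L(\vec\rho) \cong S^3$ if and only if both $H_\A(\g)$ and $H_\n(\g)$ are balls, if and only if both $(\Sigma,\A,\g)$ and $(\Sigma,\n,\g)$ present $S^3$. Together with the standing hypothesis that $(\Sigma,\A,\n)$ presents $S^3$, this is exactly the definition of a $(2,0)$--trisection diagram. The most delicate step, and the one I would write out carefully, is the first identification of surface-framed Dehn surgery with 2--handle attachment on each side; everything else is bookkeeping.
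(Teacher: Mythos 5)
Your proposal is correct and follows essentially the same route as the paper's proof: decompose $L(\vec\rho)$ as $H_\A(\g)\cup H_\n(\g)$, identify each piece being a $3$--ball with the corresponding pair $(\Sigma,\A,\g)$ or $(\Sigma,\n,\g)$ presenting $S^3$, and conclude via the fact that two $3$--manifolds with $S^2$ boundary glue to $S^3$ exactly when both are balls. You simply supply more justification at each step (the bicollar picture for surface-framed surgery and the Alexander/Sch\"onflies input) than the paper does.
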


\begin{proof}[Proof of Corollary \ref{cor:surgery}]
Suppose that $L$ is a two-component link contained in a genus two Heegaard surface $\Sigma$ for $S^3$, and $L$ admits a surface-framed cosmetic Dehn surgery.  Let $(\Sigma,\A,\n)$ be a diagram for the Heegaard splitting of $S^3$ given by $\Sigma$.  Then $(\Sigma,\A,\n,L)$ is a $(2,0)$--trisection diagram by Lemma \ref{cosmeticlink}, so by Theorem \ref{thm:main} there is a series of handle slides on $\A$, $\n$, and $L$ taking $(\A,\n,L)$ to one of the standard diagrams pictured in Figure \ref{fig:StdDiags}.  It follows that there is a series of handle slides on $L$ contained in the surface $\Sigma$ that converts $L$ to a $\pm 1$-framed unlink or a $0$-framed Hopf link.
\end{proof}



There is much active research devoted to finding exotic simply connected 4--manifolds with small $b_2$.  This corollary can be viewed as a lower bound of sorts on the complexity of link surgery descriptions of such manifolds.  In particular, we see that such an exotic manifold cannot be obtained by surface-framed surgery on a two-component link with genus two bridge number zero.

Finally, we remark that this corollary may be compared to the Generalized Property R Conjecture for two-component links, which states that if a 2--component link $L$ has a 0--framed $\#^2(S^1 \X S^2)$ surgery, then there is a series of handle slides converting $L$ to the 0--framed unlink.  See \cite{gst} for further details.  For both the corollary and the conjecture, the statements involve taking a link $L$ with a specified surgery and converting it to an obvious canonical example or examples with the same surgery via handle slides.

\bibliographystyle{amsalpha}
\bibliography{G2Biblio.bib}

\end{document}